\newcommand{\X}{\mathbb{X}}
\newcommand{\T}{\mathbb{T}}
\newtheorem{observation}{Remark}[section]
\newtheorem{lemma}[observation]{Lemma}  
\newtheorem{theorem}[observation]{Theorem}
\newtheorem{definition}[observation]{Definition}
\newtheorem{example}[observation]{Example}
\newtheorem{remark}[observation]{Remark}
\newtheorem{proposition}[observation]{Proposition} 
\newtheorem{corollary}[observation]{Corollary}
\newdimen\w@dth
\def\setw@dth#1#2{\setbox\z@\hbox{\scriptsize $#1$}\w@dth=\wd\z@
\setbox\@ne\hbox{\scriptsize $#2$}\ifnum\w@dth<\wd\@ne \w@dth=\wd\@ne \fi
\advance\w@dth by 1.2em}
\def\t@^#1_#2{\allowbreak\def\n@one{#1}\def\n@two{#2}\mathrel
{\setw@dth{#1}{#2}
\mathop{\hbox to \w@dth{\rightarrowfill}}\limits
\ifx\n@one\empty\else ^{\box\z@}\fi
\ifx\n@two\empty\else _{\box\@ne}\fi}}
\def\t@@^#1{\@ifnextchar_ {\t@^{#1}}{\t@^{#1}_{}}}
\def\t@left^#1_#2{\def\n@one{#1}\def\n@two{#2}\mathrel{\setw@dth{#1}{#2}
\mathop{\hbox to \w@dth{\leftarrowfill}}\limits
\ifx\n@one\empty\else ^{\box\z@}\fi
\ifx\n@two\empty\else _{\box\@ne}\fi}}
\def\t@@left^#1{\@ifnextchar_ {\t@left^{#1}}{\t@left^{#1}_{}}}
\def\two@^#1_#2{\def\n@one{#1}\def\n@two{#2}\mathrel{\setw@dth{#1}{#2}
\mathop{\vcenter{\hbox to \w@dth{\rightarrowfill}\kern-1.7ex
                 \hbox to \w@dth{\rightarrowfill}}%
       }\limits
\ifx\n@one\empty\else ^{\box\z@}\fi
\ifx\n@two\empty\else _{\box\@ne}\fi}}
\def\tw@@^#1{\@ifnextchar_ {\two@^{#1}}{\two@^{#1}_{}}}
\def\tofr@^#1_#2{\def\n@one{#1}\def\n@two{#2}\mathrel{\setw@dth{#1}{#2}
\mathop{\vcenter{\hbox to \w@dth{\rightarrowfill}\kern-1.7ex
                 \hbox to \w@dth{\leftarrowfill}}%
       }\limits
\ifx\n@one\empty\else ^{\box\z@}\fi
\ifx\n@two\empty\else _{\box\@ne}\fi}}
\def\t@fr@^#1{\@ifnextchar_ {\tofr@^{#1}}{\tofr@^{#1}_{}}}
\newdimen\W@dth
\def\setW@dth#1#2{\setbox\z@\hbox{$#1$}\W@dth=\wd\z@
\setbox\@ne\hbox{$#2$}\ifnum\W@dth<\wd\@ne \W@dth=\wd\@ne \fi
\advance\W@dth by 1.2em}
\def\T@^#1_#2{\allowbreak\def\N@one{#1}\def\N@two{#2}\mathrel
{\setW@dth{#1}{#2}
\mathop{\hbox to \W@dth{\rightarrowfill}}\limits
\ifx\N@one\empty\else ^{\box\z@}\fi
\ifx\N@two\empty\else _{\box\@ne}\fi}}
\def\T@@^#1{\@ifnextchar_ {\T@^{#1}}{\T@^{#1}_{}}}
\def\T@left^#1_#2{\def\N@one{#1}\def\N@two{#2}\mathrel{\setW@dth{#1}{#2}
\mathop{\hbox to \W@dth{\leftarrowfill}}\limits
\ifx\N@one\empty\else ^{\box\z@}\fi
\ifx\N@two\empty\else _{\box\@ne}\fi}}
\def\T@@left^#1{\@ifnextchar_ {\T@left^{#1}}{\T@left^{#1}_{}}}
\def\Tofr@^#1_#2{\def\N@one{#1}\def\N@two{#2}\mathrel{\setW@dth{#1}{#2}
\mathop{\vcenter{\hbox to \W@dth{\rightarrowfill}\kern-1.7ex
                 \hbox to \W@dth{\leftarrowfill}}%
       }\limits
\ifx\N@one\empty\else ^{\box\z@}\fi
\ifx\N@two\empty\else _{\box\@ne}\fi}}
\def\T@fr@^#1{\@ifnextchar_ {\Tofr@^{#1}}{\Tofr@^{#1}_{}}}
\def\Two@^#1_#2{\def\N@one{#1}\def\N@two{#2}\mathrel{\setW@dth{#1}{#2}
\mathop{\vcenter{\hbox to \W@dth{\rightarrowfill}\kern-1.7ex
                 \hbox to \W@dth{\rightarrowfill}}%
       }\limits
\ifx\N@one\empty\else ^{\box\z@}\fi
\ifx\N@two\empty\else _{\box\@ne}\fi}}
\def\Tw@@^#1{\@ifnextchar_ {\Two@^{#1}}{\Two@^{#1}_{}}}
\def\to{\@ifnextchar^ {\t@@}{\t@@^{}}}
\def\from{\@ifnextchar^ {\t@@left}{\t@@left^{}}}
\def\tofro{\@ifnextchar^ {\t@fr@}{\t@fr@^{}}}
\def\To{\@ifnextchar^ {\T@@}{\T@@^{}}}
\def\From{\@ifnextchar^ {\T@@left}{\T@@left^{}}}
\def\Two{\@ifnextchar^ {\Tw@@}{\Tw@@^{}}}
\def\Tofro{\@ifnextchar^ {\T@fr@}{\T@fr@^{}}}
\title[Diff. bundles in Comm. Algebra and Alg. Geometry]{Differential Bundles in Commutative Algebra \\ and Algebraic Geometry}
\author{G.S.H. Cruttwell and Jean-Simon Pacaud Lemay}
\thanks{The authors thank Geoff Vooys for his very useful algebraic geometry insights and references.  G.S.H. Cruttwell was partially supported by an NSERC Discovery grant, and Jean-Simon Pacaud Lemay was financially supported by a NSERC Postdoctoral Fellowship - Award \#: 456414649, a JSPS Postdoctoral Fellowship - Award \#: P21746, and an ARC DECRA, Award \#: DE230100303.}
\keywords{tangent categories, differential bundles, modules}
\address{Department of Mathematics and Computer Science\\
 Mount Allison University, Sackville, Canada \\
 School of Mathematical and Physical Sciences \\
 Macquarie University, Sydney, Australia
}
\begin{document}
\allowdisplaybreaks

\maketitle

\begin{abstract} In this paper, we explain how the abstract notion of a differential bundle in a tangent category provides a new way of thinking about the category of modules over a commutative ring and its opposite category. MacAdam previously showed that differential bundles in the tangent category of smooth manifolds are precisely smooth vector bundles. Here we provide characterizations of differential bundles in the tangent categories of commutative rings and (affine) schemes. For commutative rings, the category of differential bundles over a commutative ring is equivalent to the category of modules over that ring. For affine schemes, the category of differential bundles over the Spec of a commutative ring is equivalent to the opposite category of modules over said ring. Finally, for schemes, the category of differential bundles over a scheme is equivalent to the opposite category of quasi-coherent sheaves of modules over that scheme.  
\end{abstract}

\tableofcontents


\section{Introduction}

What exactly is the relationship between differential geometry and algebraic geometry?  While there are many differences between these two subjects, one common thread is the use of ``differential'' methods. Indeed, discussions of tangent vectors, tangent spaces, and differentials are important in both subjects. A natural question to then ask is: can we precisely relate and contrast how differential geometry and algebraic geometry use these ideas? This paper gives one way to approach this question via the theory of \emph{tangent categories}, in particular through investigating \emph{differential bundles}.    

Tangent categories were first introduced by Rosický in \cite{rosicky1984abstract}, and later generalized and further developed by Cockett and Cruttwell in \cite{cockett2014differential}. A tangent category (Definition \ref{def:tancat}) is a category equipped with an endofunctor $\mathsf{T}$ which for every object $A$ associates an object $\mathsf{T}(A)$ that ``behaves like a tangent bundle'' for $A$.  More precisely, this behaviour is captured through various natural transformations related to the endofunctor $\mathsf{T}$, which encode basic properties such as linearity of the derivative and symmetry of mixed partial derivatives. The canonical example of a tangent category is the category of smooth manifolds, where the endofunctor is the tangent bundle functor.  However, there are also many other interesting examples of tangent categories. In fact, almost any category which has some form of ``differentiation'' for its morphisms can be given the structure of a tangent category. Examples of tangent categories include: 
\begin{itemize}
    \item Most generalizations of smooth manifolds form tangent categories. The category of ``convenient'' manifolds \cite{convenient}, the category of $C^\infty$ rings \cite{lawvereCInfinity}, and any model of synthetic differential geometry (SDG) \cite{sdg99} are all tangent categories.
    \item Any Cartesian differential category \cite{blute2009cartesian}, which formalizes differential calculus over Euclidean spaces, gives a tangent category. In particular, there are many examples of Cartesian differential categories from computer science, such as models of the differential lambda-calculus \cite{EHRHARD20031}.
    \item The category of commutative rings, and more generally categories of commutative algebras, are tangent categories, whose tangent bundle is given by dual numbers (Section \ref{sec:comringtan}).  
    \item The categories of affine schemes and schemes are tangent categories, where the tangent bundle is induced by the K\"{a}hler differentials (Section \ref{sec:comringoptan}).
    \item One can extend the definition of a tangent category to ``tangent infinity'' categories, which can be used to model ideas from Goodwillie functor calculus \cite{tangentInfinity}.
\end{itemize}

The theory of tangent categories is now well-established with a rich literature. In particular, in arbitrary tangent categories one can discuss about vector fields \cite{jacobi}, vector bundles \cite{cockett2018differential}, connections \cite{connections}, solutions to differential equations \cite{cockett2021differential}, and differential forms and de Rham cohomology \cite{deRham}. 

The main focus of this paper is differential bundles (Definition \ref{def:differentialbundle}) in the tangent categories found in commutative algebra and algebraic geometry. Differential bundles are a central structure in tangent categories, as they generalize smooth vector bundles in the category of smooth manifolds \cite{macadam2021vector}.  However, intriguingly, they are defined quite differently than vector bundles. The definition of a differential bundle contains no mention of either vector spaces, a base field, or local triviality. Instead, their central structure is the existence of a \emph{vertical lift}, which is a map from the total space to its tangent bundle satisfying a key universal property. That such a structure, when looked at in the category of smooth manifolds, gives exactly smooth vector bundles \cite{macadam2021vector}, is already interesting enough, as structures like the vector spaces in each fibre, and the local triviality, all come ``for free'' from the universality of the vertical lift. 

But what are differential bundles in the tangent categories of commutative rings, affine schemes, and schemes?  It is not immediately obvious what they should be. The main objective of this paper is to answer this question, and in so doing provide results which open up the possibility for many future investigations in this area. In summary, the main results of this paper are: 
\begin{itemize}
    \item Proposition \ref{prop:mod-diff-ring} and Theorem \ref{thm:mod-diff-ring-1}: In the tangent category of commutative rings, differential bundles over a commutative ring $R$ correspond to modules over $R$, and the category of differential bundles over $R$ is equivalent to the category of modules over $R$. 
    \item Proposition \ref{prop:mod-diff-aff} and Theorem \ref{thm:mod-diff-ring-aff-1}: In the tangent category of affine schemes (or equivalently the opposite category of commutative rings), differential bundles over a commutative ring $R$ correspond to modules over $R$, and the category of differential bundles over $R$ is equivalent to the \emph{opposite} category of modules over $R$. 
    \item Theorem \ref{thm:scheme_equivalence_1}: In the tangent category of schemes, differential bundles over a scheme $A$ correspond to quasicoherent sheaves of modules over $A$, and the category of differential bundles over $A$ is equivalent to the \emph{opposite} of the category of quasicoherent sheaves of modules over $A$.
\end{itemize}

These results are fascinating for several reasons.  For one, they show how diverse differential bundles can be. In the canonical tangent category example of smooth manifolds, differential bundles are exactly smooth vector bundles, whose definition includes the strict condition of local triviality.  However, in these ``algebraic'' tangent categories, differential bundles still give categories of central importance, categories of modules, but in which the objects have no sort of local triviality condition. Independently, these results are also interesting as they give a new \emph{characterization} of these categories. In particular, differential bundles provide a novel characterization of the opposite of the category of (quasicoherent sheaves of) modules. To the best of the authors' knowledge, there is no known previous characterization of the opposite of the category of modules for an arbitrary commutative ring (though there are some results in special cases, like characterizations of the opposite category of Abelian groups).  These results are thus interesting in and of themselves.  

Even more promising than the results themselves is what future results and ideas they can lead to. As described above, in any tangent category one can define and prove results about connections on such bundles; again, when applied to the tangent category of smooth manifolds, this recreates the usual notion.  But now via tangent categories, we get a notion of connection on modules - what do these look like?  What examples of them are there?  Do they recreate existing notions of connections in algebraic geometry?  We hope to explore these questions in future work (Section \ref{sec:future_work}), and continue to use these ideas to bridge the gap between differential geometry and algebraic geometry.  

\paragraph{Outline:} In Section \ref{sec:tan_cats}, we briefly review the definition of tangent categories and differential bundles.  We also review MacAdam's characterization of differential bundles in the tangent category of smooth manifolds \cite{macadam2021vector}. In Section \ref{sec:diff_bundles_cring}, we give our first major result: a characterization of differential bundles in the tangent category of commutative rings. Section \ref{diff_bundles_cringOp} contains our most important results: characterizations of differential bundles in the tangent categories of (affine) schemes. As mentioned above, as far as we know, these results provide new characterizations of the opposites of categories of (quasicoherent sheaves of) modules. Lastly, in Section \ref{sec:future_work}, we describe future work that we hope to pursue that builds on the ideas presented in this paper. 

\paragraph{Conventions:} 
In an arbitrary category, we denote identity maps as ${1_A: A \to A}$, and we use the classical notation for composition, $g \circ f$, as opposed to diagrammatic order which was used in other papers on tangent categories (such as in \cite{cockett2014differential,cockett2018differential} for example). For pullbacks and products, we use $\pi_j$ for the projections and $\langle -,- \rangle$ for the pairing operation which is induced by the universal property. By a commutative ring, we mean a commutative, unital, and associative ring. For a commutative ring $R$ and $a,b \in R$, we denote the addition by $a+b$, the zero by $0 \in R$, the negation by $-a$,  the multiplication by $ab$, and the unit by $1 \in R$. By a an $R$-module we mean a left $R$-module. For an $R$-module $M$, $a \in R$ and $m \in M$, we denote the action by $a \cdot m$ (unless otherwise specified). 

\section{Background}\label{sec:tan_cats}

We use this section to set notation, review and expand some results about differential bundles, and recall an alternative characterization of differential bundles due to MacAdam \cite{macadam2021vector}.  

\subsection{Tangent Categories and Differential Bundles}

We begin by briefly recalling the definitions of tangent categories and differential bundles in tangent categories; for full details see \cite[Definition 2.3]{cockett2014differential} and \cite[Definition 2.2]{connections}. 

\begin{definition}\label{def:tancat} A \textbf{tangent structure} on a category $\X$ is a sextuple $\T = (\mathsf{T}, \mathsf{p}, +, 0, \ell, \mathsf{c})$ consisting of:
\begin{enumerate}[(i)]
\item An endofunctor $\mathsf{T}: \mathbb{X} \to  \mathbb{X}$, called the \textbf{tangent bundle functor};
\item A natural transformation $\mathsf{p}_A: \mathsf{T}(A) \to A$, called the \textbf{projection}, such that for each $n\in \mathbb{N}$, the pullback of $n$ copies of $\mathsf{p}_A$ exists, which we denote as $\mathsf{T}_n(A)$ with $n$ projections $\pi_j: \mathsf{T}_n(A) \to \mathsf{T}(A)$; 
\item A natural transformation\footnote{Note that by the universal property of the pullback, it follows that we can define functors $\mathsf{T}_n: \mathbb{X} \to \mathbb{X}$.} $+_A: \mathsf{T}_2(A) \to \mathsf{T}(A)$, called the \textbf{sum};
\item A natural transformation $0_A: A \to \mathsf{T}(A)$, called the \textbf{zero};
\item A natural transformation $\ell_A: \mathsf{T}(A) \to \mathsf{T}^2(A)$, called the \textbf{vertical lift};
\item A natural transformation $\mathsf{c}_A: \mathsf{T}^2(A) \to \mathsf{T}^2(A)$, called the \textbf{canonical flip};
\end{enumerate}
satisfying various axioms (see \cite[Definition 2.3]{cockett2014differential}). A \textbf{tangent category} is a pair $(\X,\T)$ consisting of a category $\X$ and a tangent structure $\T$ on $\X$. 
\end{definition}

 Most of our tangent structures $\T$ will have inverses for the sum operation $+$, which is encoded by a (necessarily unique) natural transformation $-_A: \mathsf{T}(A) \to \mathsf{T}(A)$. We refer to such a tangent structure $\T$ as a \textbf{Rosický tangent structure}\footnote{Previously called a tangent structure with negatives.}, and say that $(\X,\T)$ is a \textbf{Rosický tangent category}. Lastly, if $\X$ has products and $\mathsf{T}$ preserves them, we say that $(\X,\T)$ is a \textbf{Cartesian (Rosický) tangent category}.

\begin{example} The category of smooth manifolds with $\mathsf{T}$ being the classical tangent bundle functor is a Cartesian Rosický tangent category.
\end{example}

There are many other examples of tangent categories.  Our focus in this paper is on the tangent categories of commutative rings, affine schemes, and schemes; the tangent structures on these categories will be reviewed in Sections \ref{sec:comringtan} and \ref{sec:comringoptan}. There are also many ways to make new tangent categories from existing ones; one of the most fundamental (assuming the existence of certain well-behaved limits) is by slicing. We will use this construction, in particular, to construct tangent categories of algebras from tangent categories of rings.  

\begin{proposition}\label{prop:slice_tan_cat} \cite[Pages 4--5]{rosicky1984abstract}
Suppose that $(\X,\T)$ is a tangent category, and $A$ is an object of $\X$.  Then the slice category $\X/A$ can be given the structure of a tangent category, where the tangent bundle of an object $f: X \to A$, $T_A(f) \to A$, is given by the left side of the pullback
\[ \xymatrix{T_A(f) \ar[r] \ar[d] & TX \ar[d]^{T(f)} \\ A \ar[r]_{0_A} & TA} \]
(assuming such pullbacks exist and are preserved by each $T^n$).  
\end{proposition}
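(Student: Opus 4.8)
The plan is to transport the tangent structure on $\X$ to $\X/A$ by systematically exploiting that (i) connected limits, in particular pullbacks, in $\X/A$ are created by the forgetful functor to $\X$, and (ii) the defining pullback for the slice tangent bundle is, by hypothesis, preserved by every $\mathsf{T}^m$. Throughout I write an object of $\X/A$ as $f\colon X \rightarrow A$, and I record the defining pullback of $\mathsf{T}_A(f)$ by its two legs $\varphi_f\colon \mathsf{T}_A(f)\rightarrow \mathsf{T}(X)$ and $\hat f \colon \mathsf{T}_A(f)\rightarrow A$, so that $\mathsf{T}(f)\circ \varphi_f = 0_A\circ \hat f$ and $\hat f$ is the structure map exhibiting $\mathsf{T}_A(f)$ as an object of $\X/A$. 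It is convenient to observe at the outset that $\mathsf{T}_A$ is the composite $0_A^{\ast} \circ \mathsf{T}$, where $\mathsf{T}\colon \X/A \rightarrow \X/\mathsf{T}(A)$ is the evident functor $f \mapsto \mathsf{T}(f)$ and $0_A^{\ast}$ is pullback along $0_A\colon A \rightarrow \mathsf{T}(A)$; this presentation makes the functoriality of $\mathsf{T}_A$ automatic and organizes the bookkeeping later.

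First I would define every structure map by the universal property of the defining pullback, using the corresponding base map. For the projection I set $\mathsf{p}^A_f := \mathsf{p}_X \circ \varphi_f$, which lands in $X$ and is a map over $A$ since $f\circ \mathsf{p}_X \circ \varphi_f = \mathsf{p}_A \circ \mathsf{T}(f)\circ \varphi_f = \mathsf{p}_A\circ 0_A \circ \hat f = \hat f$. For the zero $0^A_f$ I use the pair $(0_X, f)$, compatible because $\mathsf{T}(f)\circ 0_X = 0_A\circ f$ by naturality of $0$. The vertical lift $\ell^A_f$ and canonical flip $\mathsf{c}^A_f$ are induced from $\ell_X$ and $\mathsf{c}_X$ in the same fashion, and, in the Rosický case, the negative from $-_X$. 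The $n$-fold pullback power of $\mathsf{p}^A_f$, which I denote $\mathsf{T}^A_n(f)$, is computed as a pullback in $\X$; unwinding the fibre conditions identifies it with the pullback of $\mathsf{T}_n(f)\colon \mathsf{T}_n(X) \rightarrow \mathsf{T}_n(A)$ along the $n$-fold zero section $A \rightarrow \mathsf{T}_n(A)$, so that the sum $+^A_f$ is induced by $+_X$. Each such map is verified to lie over $A$ using naturality of the relevant base transformation together with $\mathsf{p}_A\circ 0_A = 1_A$.

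Next I would dispatch the purely equational axioms \textbf{[T.1]}--\textbf{[T.5]} (and \textbf{[T.N]} in the Rosický case). The key mechanism is that two maps into a pullback agree as soon as their composites with the two legs agree. By construction every slice structure map has its $\mathsf{T}(X)$-leg equal to the corresponding base map precomposed with $\varphi$, while its $A$-leg is determined by $\hat f$. Hence each slice identity reduces, after postcomposing with the legs $\varphi$ (and their iterates $\mathsf{T}(\varphi)$, which are again legs of pullbacks because $\mathsf{T}$ preserves the defining square), to the identical base identity plus naturality, the $A$-component being handled by the uniqueness clause of the universal property. This step is entirely routine, if lengthy, and uses no more than the preservation hypothesis to ensure that the various targets $\mathsf{T}^m \mathsf{T}_A(f)$ really are the pullbacks one expects.

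The substantive work, and the step I expect to be the main obstacle, is the universality of the vertical lift \textbf{[T.6]} in $\X/A$, together with the existence of the pullback powers $\mathsf{T}^A_n$ and their preservation by every $\mathsf{T}^m_A$. Here I would argue entirely by the pasting lemma for pullbacks. Using $\mathsf{T}_A = 0_A^{\ast}\mathsf{T}$ and the preservation hypothesis, $\mathsf{T}$ carries the defining pullback of $\mathsf{T}_A(f)$ to another pullback, which yields a Beck--Chevalley-style identification exhibiting $\mathsf{T}^2_A(f)$ as a two-fold nested pullback sitting over $\mathsf{T}^2(X)$, and likewise $\mathsf{T}^A_2(f)$ as a pullback over $\mathsf{T}_2(X)$. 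The slice universality square then factors, via these identifications, as a paste of the base universality square \textbf{[T.6]} for $X$ with pullback squares that hold by hypothesis, and the pasting lemma delivers that the slice square is a pullback; preservation by $\mathsf{T}^m_A$ reduces in the same way to the base fact that $\mathsf{T}^m$ preserves both \textbf{[T.6]} and the $\mathsf{T}_n(X)$-pullbacks. The delicate points---and the reason this is the hard part---are confirming that the induced map $\nu^A_f$ genuinely matches the base $\nu_X$ under the pullback legs, and keeping the nesting of pullbacks straight so that the pasting lemma applies cleanly; everything else then follows formally.
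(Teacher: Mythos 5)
Your overall strategy -- realize $\mathsf{T}_A$ as $0_A^{\ast}\circ\mathsf{T}$, induce every structure map from the base one via the universal property of the defining pullback, reduce the equational axioms to base identities by composing with the pullback legs, and handle \textbf{[T.6]} by pasting -- is the right one; note that the paper itself offers no proof (it imports the result by citation from \cite[Proposition 2.5]{cockett2014differential}), so I am comparing your plan against the standard argument, with which it agrees in outline. The constructions of $\mathsf{p}^A_f$, $0^A_f$, the sum, and the identification of $\mathsf{T}^A_n(f)$ as the pullback of $\mathsf{T}_n(f)$ along $\langle 0_A,\hdots,0_A\rangle\colon A \rightarrow \mathsf{T}_n(A)$ are all correct. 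The gap is in the step you yourself flag as the hard one: your claim that the slice universality square ``factors as a paste of the base universality square \textbf{[T.6]} for $X$ with pullback squares that hold by hypothesis'' is false as stated. The square one would naturally paste along -- with edges $\mathsf{T}_A(\mathsf{p}^A_f)\colon \mathsf{T}^2_A(f) \rightarrow \mathsf{T}_A(f)$, the projection $\mathsf{T}^2_A(f) \rightarrow \mathsf{T}^2(X)$, $\mathsf{T}(\mathsf{p}_X)$, and $\varphi_f$ -- is \emph{not} a pullback. Take $f = 1_A$: then $\mathsf{T}_A(1_A) \cong A \cong \mathsf{T}^2_A(1_A)$, and this square becomes the one with top edge $\mathsf{T}(0_A)\circ 0_A\colon A \rightarrow \mathsf{T}^2(A)$, right edge $\mathsf{T}(\mathsf{p}_A)$, left edge $1_A$, bottom edge $0_A$; by \textbf{[T.6]} for $A$ its actual pullback is $\mathsf{T}_2(A)$, not $A$. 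So the pasting configuration you describe does not exist, and no combination of the hypothesis pullbacks with \textbf{[T.6]} for $X$ alone will produce the slice square.

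The repair needs an ingredient absent from your outline: \textbf{[T.6]} for the base object $A$ itself. A decomposition that works is the following. First paste the slice square vertically with the square having edges $1_X$, $0^A_f$, $\varphi_f$, $0_X$; this square is a pullback because $0_A$ is split monic. The resulting composite decomposes vertically as the comparison square $S$ (edges $\nu^A_f$, the two projections, and $\nu_X$) sitting on top of \textbf{[T.6]} for $X$. Finally, $S$ is a pullback by cancellation: pasting $S$ horizontally with the defining pullback of $\mathsf{T}^2_A(f)$ over $\mathsf{T}(0_A)\circ 0_A$ equals, by naturality of $\nu$, the pasting of the defining pullback of $\mathsf{T}^A_2(f)$ over $\langle 0_A,0_A\rangle$ with the square having $1_A$ on top, $\langle 0_A,0_A\rangle$ and $\mathsf{T}(0_A)\circ 0_A$ on the sides, and $\nu_A$ on the bottom -- and \emph{that} last square is a pullback precisely because $\nu_A$ is monic, which is a consequence of \textbf{[T.6]} for $A$ ($\nu_A$ is a pullback of the split mono $0_A$). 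This is not a cosmetic point: in the existence half of the universal property, \textbf{[T.6]} for $X$ hands you $(v_1,v_2)\in\mathsf{T}_2(X)$ with $\nu_X\langle v_1,v_2\rangle = u$, but you must still prove $\mathsf{T}(f)\circ v_i = 0_A\circ a$ to know the induced map lands in $\mathsf{T}^A_2(f)$, and monicity of $\nu_A$ is exactly what yields that. So: right strategy, but the key pasting as you describe it would fail, and the fix requires invoking the vertical-lift universality at $A$ (plus monicity of $0_A$), neither of which appears in your sketch.
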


The main focus of this paper is characterizing differential bundles in certain tangent categories.

\begin{definition} \label{def:differentialbundle}  In a tangent category $(\mathbb{X}, \mathbb{T})$, a \textbf{differential bundle} is a quadruple $\mathcal{E} = (\mathsf{q}, \sigma, \mathsf{z}, \lambda)$ consisting of: 
\begin{enumerate}[{\em (i)}]
\item Objects $E$ and $A$ of $\mathbb{X}$, called the \textbf{total} and \textbf{base} objects, respectively;
\item A map ${\mathsf{q}: E \to A}$ of $\mathbb{X}$, called the \textbf{projection}, such that for each $n \in \mathbb{N}$, the pullback of $n$ copies of $\mathsf{q}$ exists, which we denote as $E_n$, with $n$ projection maps $\pi_j: E_n \to E$;
\item A map $\sigma: E_2 \to E$ of $\mathbb{X}$, called the \textbf{sum};
\item A map $\mathsf{z}: A \to E$ of $\mathbb{X}$, called the \textbf{zero};
\item A map $\lambda: E \to \mathsf{T}(E)$ of $\mathbb{X}$, called the \textbf{lift};
\end{enumerate}
satisfying various axioms (for full details, see \cite[Definition 2.3]{cockett2018differential}).  We say the differential bundle \textbf{has negatives} if there is a map $\iota: E \to E$ which is an inverse for the addition operation $\sigma$.  
\end{definition}

Note that a differential bundle can have negatives in any arbitrary tangent category and that the negative is necessarily unique. As was shown in \cite{macadam2021vector}, in a Rosický tangent category, every differential bundle comes equipped with a (necessarily unique) negative (Proposition \ref{prop:Ben2}). Therefore in a Cartesian Rosický tangent category, differential bundles are the same as differential bundles with negatives.

 \begin{example}
 Differential bundles in the tangent category of smooth manifolds over a smooth manifold $M$ are precisely the same as smooth vector bundles over $M$ \cite[Theorem 4.2.7]{macadam2021vector}. Note that this is quite a surprising result, as the definition of differential bundle makes no explicit mention of vector space structure or local triviality!
 \end{example}

 Naturally, differential bundles over a terminal object (if one exists) are also a useful concept:

 \begin{definition} \label{def:diffobj} In a Cartesian tangent category $(\mathbb{X}, \mathbb{T})$, a \textbf{differential object} \cite[Proposition 3.4]{cockett2018differential} is a differential bundle over the terminal object $\ast$. 
\end{definition}

Alternatively, a differential object can be described as an object $A$ equipped with maps ${\hat{\mathsf{p}}: \mathsf{T}(A) \to A}$, $\sigma: A \times A \to A$, and $\mathsf{z}: \ast \to A$ such that $(A, \sigma, \mathsf{z})$ is a commutative monoid, $\mathsf{T}(A) \cong A \times A$ via $\mathsf{p}_A$ and $\hat{\mathsf{p}}$, and the diagrams from \cite[Definition 4.8]{cockett2014differential} commute. In a Cartesian Rosický tangent category, every differential object is automatically an Abelian group. 

\begin{example} \label{ex:smoothdiffobj} In the tangent category of smooth manifolds, the differential objects are precisely the Euclidean spaces.
\end{example}

The following result about differential bundles in a slice tangent category is easy to check, but will be useful for us to help characterize differential bundles in categories of algebras:

\begin{proposition}\label{prop:slice_diff_bundles}
If $(\mathbb{X}, \mathbb{T})$ is a tangent category with an object $A$ which satisfies the requirements of Proposition \ref{prop:slice_tan_cat}, then in the slice tangent category $\mathbb{X}/A$, a differential bundle over $f: X \to A$ is the same as a differential bundle over $X$ in $(\mathbb{X}, \mathbb{T})$.  
\end{proposition}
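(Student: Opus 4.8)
The plan is to show that the codomain (forgetful) functor $U \colon \X/A \to \X$ transports the two notions into one another, the only real work being bookkeeping, since $U\mathsf{T}_A \neq \mathsf{T}U$: the slice tangent bundle is a \emph{sub}-object of the ambient one. First I would record two enabling facts. The functor $U$ creates pullbacks (indeed all connected limits), so the pullback powers $E_n$ of a projection formed in $\X/A$ have underlying object the pullback powers formed in $\X$, matching the data required in clause (ii) of Definition \ref{def:differentialbundle}. Second, for an object $g \colon E \to A$ of $\X/A$, Proposition \ref{prop:slice_tan_cat} presents $\mathsf{T}_A(g)$ as the pullback of $0_A \colon A \to \mathsf{T}(A)$ along $\mathsf{T}(g) \colon \mathsf{T}(E) \to \mathsf{T}(A)$; write $\rho_g \colon \mathsf{T}_A(g) \to \mathsf{T}(E)$ for the resulting projection. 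From the construction in Proposition \ref{prop:slice_tan_cat} I would extract that each slice structure map ($\mathsf{p}$, $+$, $0$, $\ell$, $\mathsf{c}$) is exactly the map induced on these defining pullbacks by the corresponding ambient structure map, so that the projections $\rho$ intertwine the slice and ambient tangent structures.

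With these in hand I would match the data. A differential bundle over $f \colon X \to A$ in $\X/A$ has total object some $g \colon E \to A$ together with a slice projection $\mathsf{q} \colon g \to f$, which is nothing but a map $q \colon E \to X$ with $g = f \circ q$; thus the base datum over $f$ is precisely a projection $q \colon E \to X$, the same as for a differential bundle over $X$. The sum $\sigma$ and zero $\mathsf{z}$ are slice maps, but the extra condition of commuting with the maps to $A$ is automatic from \textbf{[DB.1]} (which already forces $q \circ \sigma = q \circ \pi_j$ and $q \circ \mathsf{z} = 1_X$). The one genuinely different datum is the lift: in the slice it is a map $\lambda \colon g \to \mathsf{T}_A(g)$, i.e.\ a map $\lambda_0 := \rho_g \circ \lambda \colon E \to \mathsf{T}(E)$ with $\mathsf{T}(g) \circ \lambda_0 = 0_A \circ g$. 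I would observe that this factorization condition is automatic for a lift over $X$: from \textbf{[DB.2]} one has $\mathsf{T}(q) \circ \lambda_0 = 0_X \circ q$, whence $\mathsf{T}(g) \circ \lambda_0 = \mathsf{T}(f) \circ \mathsf{T}(q) \circ \lambda_0 = \mathsf{T}(f) \circ 0_X \circ q = 0_A \circ g$ by naturality of $0$. So $\lambda \mapsto \lambda_0$ is a bijection between slice lifts and ambient lifts, and the underlying data of the two kinds of differential bundle coincide.

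It then remains to check that the axioms correspond, and here my approach would be uniform: each slice axiom is an equation (or pullback) between slice maps into some $\mathsf{T}_A^n(-)$, and I would decompose it through the defining pullback(s) into a component along $\rho$ and a component along the structure map to $A$. Because $\rho$ intertwines the structures, the $\rho$-component reproduces verbatim the corresponding axiom \textbf{[DB.1]}--\textbf{[DB.5]} over $X$, while the $A$-component collapses to an identity expressing that every map in sight lies over $A$, hence holds automatically. For \textbf{[DB.1]}--\textbf{[DB.3]} this is a direct unwinding; for \textbf{[DB.4]} one uses in addition that $\rho$ intertwines $\ell$ on the two sides, which is where the iterated-pullback description of $\mathsf{T}_A^2$ enters.

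The hard part will be the universality axiom \textbf{[DB.5]}, since it is not an equation but the assertion that a square is a pullback preserved by all $\mathsf{T}_A^n$. Here I would first check that the slice $\mu$ satisfies $\rho_g \circ \mu = \mu$ for the ambient $\mu$ of \eqref{Enudef}, and that the comparison square with vertices $\mathsf{T}_A(g), \mathsf{T}(E), \mathsf{T}_A(f), \mathsf{T}(X)$ (edges $\rho_g,\rho_f$ and $\mathsf{T}_A(\mathsf q),\mathsf{T}(q)$) is itself a pullback, since both $\mathsf{T}_A(g)$ and $\mathsf{T}_A(f)$ are pullbacks over $\mathsf{T}(A)$ and $\mathsf{T}(f)\circ\mathsf{T}(q)=\mathsf{T}(g)$. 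Then pasting this square to the right of the (image under $U$ of the) slice \textbf{[DB.5]} square yields exactly the ambient square \eqref{Euniversalift}, so by the pasting lemma one is a pullback iff the other is, and since $U$ reflects pullbacks the slice condition in $\X/A$ matches the ambient one in $\X$. Finally, preservation by every $\mathsf{T}_A^n$ reduces to preservation by every $\mathsf{T}^n$, which is precisely the hypothesis already guaranteeing that the slice tangent structure is well defined in Proposition \ref{prop:slice_tan_cat}. Running the argument in both directions gives the claimed identification of differential bundles.
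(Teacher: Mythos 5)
The paper states this proposition without proof (it is introduced as "easy to check"), and your proposal is a correct execution of exactly that check: you identify the data of the two notions via the pullback presentation of $\mathsf{T}_A$ from Proposition \ref{prop:slice_tan_cat} together with the fact that the forgetful functor $\mathbb{X}/A \to \mathbb{X}$ creates and reflects pullbacks, and you transfer the universality axiom \textbf{[DB.5]} by pasting the slice square against the comparison pullback relating $\mathsf{T}_A$ to $\mathsf{T}$. This is the intended routine verification, and your handling of the only delicate points (the bijection between slice lifts and ambient lifts, and the pasting argument for \textbf{[DB.5]} and its preservation by the $\mathsf{T}_A^n$) is sound.
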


\subsection{Differential Bundles as Pre-Differential Bundles} \label{sec:prediff}

In this section, we review MacAdam's pre-differential bundles as introduced in \cite{macadam2021vector}. These allow for an alternative characterization of differential bundles, which in particular requires less data and axioms. Indeed, MacAdam cleverly observed that in the definition of a differential bundle, the sum (and negative), and any axioms involving it, can be replaced by a pullback square, called Rosický's universality diagram. From this special pullback, the sum (and negative) for the differential bundle can be constructed from the sum (and negative) of the tangent bundle. MacAdam then introduced pre-differential bundles, which are defined using only the projection, zero, and lift, and showed that differential bundles are precisely pre-differential bundles such that the $n$-fold pullbacks of the projection exist and Rosický's universality diagram holds. This pre-differential bundle approach to differential bundles is quite useful since it requires less data and fewer axioms to check when one wants to construct a differential bundle. This will be particularly useful when we will characterize differential bundles for commutative rings and (affine) schemes.  

The definition of a pre-differential bundle is what remains from the definition of a differential bundle after removing the sum (and negative) and any required pullback. 

\begin{definition} \label{def:prediffbun} In a tangent category $(\mathbb{X}, \mathbb{T})$, a \textbf{pre-differential bundle} \cite[Definition 10]{macadam2021vector} is a triple ${(\mathsf{q}: E \to A, \mathsf{z}: A \to E, \lambda: E \to \mathsf{T}(E))}$ consisting of objects $A$ and $E$ of $\mathbb{X}$, and maps ${\mathsf{q}: E \to A}$, ${\mathsf{z}: A \to E}$, and $\lambda: E \to \mathsf{T}(E)$ of $\mathbb{X}$ such that the following diagrams commute: 
 \begin{equation}\label{prediffbuneq}\begin{gathered} 
  \xymatrixcolsep{2.75pc}\xymatrix{ A \ar[r]^-{\mathsf{z}} \ar@{=}[dr]_-{}  & E \ar[d]^-{\mathsf{q}} & E \ar[r]^-{\lambda} \ar[d]_-{\mathsf{q}}  &  \mathsf{T}(E) \ar[d]^-{\mathsf{p}_E} & A \ar[r]^-{\mathsf{z}} \ar[d]_-{\mathsf{z}} & E \ar[d]^-{0_E} & E \ar[r]^-{\lambda} \ar[d]_-{\lambda}  & \mathsf{T}(E) \ar[d]^-{\mathsf{T}(\lambda)}  \\
& A & A \ar[r]_-{\mathsf{z}}  & E & E \ar[r]_-{\lambda} &  \mathsf{T}(E) &  \mathsf{T}(E) \ar[r]_-{\ell_E} &  \mathsf{T}^2(E) }  \end{gathered}\end{equation} 
If $(\mathsf{q}: E \to A, \mathsf{z}: A \to E, \lambda: E \to \mathsf{T}(E))$ is a pre-differential bundle, we say that it is a \textbf{pre-differential bundle over $A$}. When there is no confusion, pre-differential bundles will be denoted as $(\mathsf{q}: E \to A, \mathsf{z}, \lambda)$, and when the objects are specified simply as $(\mathsf{q}, \mathsf{z}, \lambda)$. 
\end{definition}

By definition of a differential bundle, the projection, zero, and lift of a differential bundle give a pre-differential bundle. On the other hand, a pre-differential bundle is a differential bundle precisely when the pullback of $n$ copies of the projection exists and certain squares are pullbacks \cite[Proposition 6]{macadam2021vector}. Since the main tangent categories of interest in this paper are Cartesian Rosický, we review when a pre-differential bundle is a differential bundle in this setting, where only one square is required to be a pullback \cite[Corollary 3]{macadam2021vector}. This pullback is called Rosický's universality diagram, and using the pullback universal property, we can construct the sum and negative for the differential bundle \cite[Lemma 5]{macadam2021vector}. 

\begin{proposition} \label{prop:prediff} \cite[Corollary 3]{macadam2021vector} Let $(\mathbb{X}, \mathbb{T})$ be a Cartesian Rosický tangent category, and let $(\mathsf{q}: E \to A, \mathsf{z}, \lambda)$ be a pre-differential bundle in $(\mathbb{X}, \mathbb{T})$ such that:
\begin{enumerate}[{\em (i)}]
\item \label{prediff-i} For each $n \in \mathbb{N}$, the pullback of $n$ copies of $\mathsf{q}$ exists, which we denote as $E_n$ with $n$ projection maps $\pi_j: E_n \to E$, for all $1 \leq j \leq n$, so $q \circ \pi_j = q \circ \pi_i$ for all $1 \leq i,j \leq n$, and for all $m \in \mathbb{N}$, $\mathsf{T}^m$ preserves these pullbacks;
\item \label{prediff-ii} The following commuting square is a pullback, called \textbf{Rosický's universality diagram}: 
 \begin{equation}\label{prediffbunpb}\begin{gathered} 
  \xymatrixcolsep{5pc}\xymatrix{ E \ar[r]^-{\lambda}  \ar[d]_-{\mathsf{q}} & \mathsf{T}(E) \ar[d]^-{\langle \mathsf{T}(\mathsf{q}), \mathsf{p}_E \rangle} \\
 A \ar[r]_-{\langle 0_A, \mathsf{z} \rangle}  & \mathsf{T}(A) \times E }  \end{gathered}\end{equation}
and for all $m \in \mathbb{N}$, $\mathsf{T}^m$ preserves this pullback. 
\end{enumerate}
Then define the maps $\sigma: E_2 \to E$ and $\iota: E \to E$ respectively as follows using the universal property of the above pullback: 
 \begin{equation}\label{sigmadef}\begin{gathered} 
  \xymatrixcolsep{4pc}\xymatrix{ E_2   \ar@{-->}[dr]_-{\sigma} \ar[dd]_-{\pi_j} \ar[rr]^-{\langle \lambda \circ \pi_1, \lambda \circ \pi_2 \rangle} & & \mathsf{T}_2(E) \ar[d]^-{+_E} & E   \ar@{-->}[dr]_-{\iota} \ar@/_1.5pc/[ddr]_-{\mathsf{q}} \ar[rr]^-{\lambda} & & \mathsf{T}(E) \ar[d]^-{-_E} \\  
   & E \ar[r]^-{\lambda}  \ar[d]_-{\mathsf{q}} & \mathsf{T}(E) \ar[d]^-{\langle \mathsf{T}(\mathsf{q}), \mathsf{p}_E \rangle} & & E \ar[r]^-{\lambda}  \ar[d]_-{\mathsf{q}} & \mathsf{T}(E) \ar[d]^-{\langle \mathsf{T}(\mathsf{q}), \mathsf{p}_E \rangle} \\
E  \ar[r]_-{\mathsf{q}}  & A \ar[r]_-{\langle 0_A, \mathsf{z} \rangle}  & \mathsf{T}(A) \times E &  & A \ar[r]_-{\langle 0_A, \mathsf{z} \rangle}  & \mathsf{T}(A) \times E }  \end{gathered}\end{equation}
Then $\mathcal{E}= (\mathsf{q}, \sigma, \mathsf{z}, \lambda, \iota)$ is a differential bundle with negatives over $A$. 
\end{proposition}

Conversely, if $\mathcal{E} = (\mathsf{q}: E \to A, \sigma, \mathsf{z}, \lambda)$ is a differential bundle in a Cartesian Rosický tangent category, then $(\mathsf{q}, \mathsf{z}, \lambda)$ is a pre-differential bundle which satisfies (\ref{prediff-i}) and (\ref{prediff-ii}) in Proposition \ref{prop:prediff}. Furthermore, the induced sum as constructed in Proposition \ref{prop:prediff} is precisely the sum $\sigma$ one started with, and so $(\mathsf{q}, \sigma, \mathsf{z}, \lambda, \iota)$ is a differential bundle with negatives. Similarly, if $(\mathsf{q}: E \to A, \sigma, \mathsf{z}, \lambda, \iota)$ is a differential bundle with negatives, then the induced negative as constructed in Proposition \ref{prop:prediff} is precisely the negative $\iota$ one started with. Therefore, in a Cartesian Rosický tangent category, every differential bundle is in fact a differential bundle with negatives. In conclusion, we have the following equivalence: 

\begin{proposition} \label{prop:Ben2} \cite[Proposition 6 \& Corollary 3]{macadam2021vector} In a Cartesian Rosický tangent category $(\mathbb{X}, \mathbb{T})$, the following are in bijective correspondence: 
\begin{enumerate}[{\em (i)}]
\item Differential bundles;
\item Differential bundles with negatives;
\item Pre-differential bundles that satisfy (\ref{prediff-i}) and (\ref{prediff-ii}) in Proposition \ref{prop:prediff}.
\end{enumerate}
\end{proposition}

\subsection{Morphisms and Categories of Differential Bundles} \label{sec:diffmorph}

In this section, we review morphisms between differential bundles. There are two possible kinds: one where the base objects can vary and one where the base object is fixed. The former is used as the maps in the category of all differential bundles of a tangent category, while the latter is used in the category of differential bundles over a specified object. In either case, a differential bundle morphism is asked to preserve the projections and the lifts of the differential bundles. 

\begin{definition} \label{def:differentialbundlemorph} \cite[Definion 2.3]{cockett2018differential} Let $(\mathbb{X}, \mathbb{T})$ be a (Rosický) tangent category.  
\begin{enumerate}[{\em (i)}]
\item Let $\mathcal{E} = (\mathsf{q}: E \to A, \sigma, \mathsf{z}, \lambda)$ and $\mathcal{E}^\prime = (\mathsf{q}^\prime: E^\prime \to A^\prime, \sigma^\prime, \mathsf{z}^\prime, \lambda^\prime)$ be differential bundles in $(\mathbb{X}, \mathbb{T})$. A \textbf{differential bundle morphism}\footnote{These were referred to as \emph{linear} differential bundle morphisms in \cite[Definition 2.3]{cockett2018differential}; however, since these morphisms are the ones of primary importance in this paper, here we simply refer to them as differential bundle morphisms.} $(f,g): \mathcal{E} \to \mathcal{E}^\prime$ is a pair of maps ${f: E \to E^\prime}$ and $g: A \to A^\prime$ such that the following diagram commutes: 
   \begin{equation}\label{diffbunmap}\begin{gathered} 
   \xymatrixcolsep{5pc}\xymatrix{ E \ar[r]^-{f} \ar[d]_-{\mathsf{q}} & E^\prime \ar[d]^-{\mathsf{q}^\prime} &  E \ar[d]_-{\lambda} \ar[r]^-{f}  & E^\prime  \ar[d]^-{\lambda^\prime}    \\
 A \ar[r]_-{g} & A^\prime & \mathsf{T}(E) \ar[r]_-{\mathsf{T}(f)}  & \mathsf{T}(E^\prime)}  \end{gathered}\end{equation} 
Let $\mathsf{DBun}\left[(\mathbb{X}, \mathbb{T}) \right]$ be the category whose objects are differential bundles in $(\mathbb{X}, \mathbb{T})$, maps are differential bundle morphisms between them, identity maps are pairs of identity maps $(1_E, 1_A): \mathcal{E} \to \mathcal{E}$, and composition is defined point-wise, that is, $(f,g) \circ (h,k) = (f \circ h, g \circ k)$. 
\item Let $A$ be an object in $\mathbb{X}$ and $\mathcal{E} = (\mathsf{q}: E \to A, \sigma, \mathsf{z}, \lambda)$ and $\mathcal{E}^\prime = (\mathsf{q}^\prime: E^\prime \to A, \sigma^\prime, \mathsf{z}^\prime, \lambda^\prime)$ be differential bundles over $A$ in $(\mathbb{X}, \mathbb{T})$. A \textbf{differential bundle morphism} ${f: \mathcal{E} \to \mathcal{E}^\prime}$ \textbf{ over $A$} is a map ${f: E \to E^\prime}$ such that $(f,1_A): \mathcal{E} \to \mathcal{E}^\prime$ is a differential bundle morphism. Explicitly, the following diagrams commute: 
   \begin{equation}\label{Adiffbunmap}\begin{gathered} 
   \xymatrixcolsep{5pc}\xymatrix{ E \ar[r]^-{f} \ar[dr]_-{\mathsf{q}} & E^\prime \ar[d]^-{\mathsf{q}^\prime} &  E \ar[d]_-{\lambda} \ar[r]^-{f}  & E^\prime  \ar[d]^-{\lambda^\prime}    \\
& A  & \mathsf{T}(E) \ar[r]_-{\mathsf{T}(f)}  & \mathsf{T}(E^\prime)}  \end{gathered}\end{equation} 
Let $\mathsf{DBun}_\mathbb{T}[A]$ be the category whose objects are differential bundles over $A$ in $(\mathbb{X}, \mathbb{T})$ and whose maps are differential bundle morphisms over $A$ between them, and where identity maps and composition are the same as in $\mathbb{X}$. 
\end{enumerate}
\end{definition}

Differential bundle morphisms automatically preserve the sum and zero, and also negatives if they exist:

\begin{lemma} \label{lem:diffbunmapadd} \cite[Proposition 2.16]{cockett2018differential} Let $(\mathbb{X}, \mathbb{T})$ be a tangent category, and let \\
$\mathcal{E} = (\mathsf{q}: E \to A, \sigma, \mathsf{z}, \lambda)$ and $\mathcal{E}^\prime = (\mathsf{q}^\prime: E^\prime \to A^\prime, \sigma^\prime, \mathsf{z}^\prime, \lambda^\prime )$ be differential bundles in $(\mathbb{X}, \mathbb{T})$, and let $(f,g): \mathcal{E} \to \mathcal{E}^\prime$ be a differential bundle morphism between them. Then $(f,g)$ is an additive bundle morphism \cite[Definition 2.2]{cockett2014differential}, that is, the following diagrams commute: 
   \begin{equation}\label{addbunmap}\begin{gathered} 
   \xymatrixcolsep{5pc}\xymatrix{   E_2  \ar[d]_-{\sigma}  \ar[r]^-{\langle f \circ \pi_1,  f \circ \pi_2 \rangle} & E^\prime_2  \ar[d]^-{\sigma^\prime} & A \ar[d]_-{\mathsf{z}} \ar[r]^-{g}  & A^\prime  \ar[d]^-{\mathsf{z}^\prime}    \\
 E  \ar[r]_-{f}  & E^\prime & E \ar[r]_-{f}  & E^\prime}  \end{gathered}\end{equation} 
Similarly, let $(\mathsf{q}: E \to A, \sigma, \mathsf{z}, \lambda, \iota)$ and $(\mathsf{q}^\prime: E^\prime \to A^\prime, \sigma^\prime, \mathsf{z}^\prime, \lambda^\prime, \iota^\prime)$ be differential bundles with negatives in $(\mathbb{X}, \mathbb{T})$, and let $(f,g): (\mathsf{q}, \sigma, \mathsf{z}, \lambda) \to (\mathsf{q}^\prime, \sigma^\prime, \mathsf{z}^\prime, \lambda^\prime)$ be a differential bundle morphism between the underlying differential bundles. Then $f$ preserves the negative, that is, the following diagram commutes: 
  \begin{equation}\label{groupmap}\begin{gathered} 
 \xymatrixcolsep{5pc}\xymatrix{ E \ar[d]_-{\iota}  \ar[r]^-{f} & E^\prime  \ar[d]^-{\iota^\prime}   \\
 E \ar[r]_-{f}  & E^\prime  }  \end{gathered}\end{equation} 
\end{lemma}

Other properties of differential bundle morphisms can be found in \cite[Section 2.5]{cockett2018differential}. 

Note that since differential bundle morphisms preserve negatives, the notion of a morphism between differential bundles with negatives is the same as a differential bundle morphism. Therefore for a Rosický tangent category, it follows from Proposition \ref{prop:Ben2} that its category of differential bundles is the same as its category of differential bundles with negatives. As such, abusing notation slightly, for a Rosický tangent category $(\mathbb{X}, \mathbb{T})$, we will consider $\mathsf{DBun}\left[(\mathbb{X}, \mathbb{T}) \right]$ and $\mathsf{DBun}_\mathbb{T}[A]$ to be the categories whose objects are differential bundles with negatives and whose maps are differential bundle morphisms. 

For a Cartesian (Rosický) tangent category, its category of differential objects is the category of differential bundles over the terminal object. Note that this is not the same as the Cartesian differential category of differential objects \cite[Theorem 4.11]{cockett2014differential}, since in that category the morphisms are not required to preserve the lift, sum, zero, or negative. 

\begin{definition} Let $(\mathbb{X}, \mathbb{T})$ be a Cartesian (Rosický) tangent category. Define\\
 $\mathsf{DIFF}\left[ (\mathbb{X}, \mathbb{T}) \right]$ to be the category of differential objects and differential bundle morphisms over $\ast$ between them, so $\mathsf{DIFF}\left[ (\mathbb{X}, \mathbb{T}) \right] = \mathsf{DBun}_\mathbb{T}[\ast]$. 
\end{definition}

We conclude this section by discussing differential bundle isomorphisms.  If $(\mathbb{X}, \mathbb{T})$ is a (Rosický) tangent category, then a differential bundle isomorphism is an isomorphism in the category $\mathsf{DBun}\left[(\mathbb{X}, \mathbb{T}) \right]$. Explicitly, this is a differential bundle morphism $(f,g)$ such that there exists a differential bundle morphism of opposite type $(f^{-1}, g^{-1})$ such that $(f,g) \circ (f^{-1}, g^{-1})= (1,1)$ and $(f^{-1}, g^{-1}) \circ (f,g) = (1,1)$. By definition of the composition in $\mathsf{DBun}\left[(\mathbb{X}, \mathbb{T}) \right]$, this is precisely the same as requiring that $f$ and $g$ are isomorphisms in $\mathbb{X}$. Similarly, for an object $A$, a differential bundle isomorphism over $A$ is an isomorphism in the category $\mathsf{DBun}_\mathbb{T}[A]$, which is a differential bundle morphism $f$ which is an isomorphism in $\mathbb{X}$ whose inverse $f^{-1}$ is also a differential bundle morphism. We will now prove the converse, that if the underlying maps of a differential bundle morphism are isomorphisms in the base category, then their inverses are also a differential bundle morphism. This will allow us to reduce the number of things to check when characterizing differential bundles in various tangent categories.  

\begin{lemma}\label{cor:diffbuniso} Let $(\mathbb{X}, \mathbb{T})$ be a tangent category.
\begin{enumerate}[{\em (i)}]
\item Let $\mathcal{E} = (\mathsf{q}: E \to A, \sigma, \mathsf{z}, \lambda)$ and $\mathcal{E}^\prime = (\mathsf{q}^\prime: E^\prime \to A^\prime, \sigma^\prime, \mathsf{z}^\prime, \lambda^\prime )$ be differential bundles in $(\mathbb{X}, \mathbb{T})$, and let $(f,g): \mathcal{E} \to \mathcal{E}^\prime$ be a differential bundle morphism between them. If $f: E \to E^\prime$ and $g: A \to A^\prime$ are isomorphisms in $\mathbb{X}$, then $(f^{-1}, g^{-1}): \mathcal{E}^\prime \to \mathcal{E}$ is a differential bundle morphism. Therefore, $(f,g)$ is a differential bundle isomorphism with inverse $(f^{-1}, g^{-1})$. 
\item Let $A$ an object in $\mathbb{X}$, and let $\mathcal{E} = (\mathsf{q}: E \to A, \sigma, \mathsf{z}, \lambda)$ and $\mathcal{E}^\prime = (\mathsf{q}^\prime: E^\prime \to A, \sigma^\prime, \mathsf{z}^\prime, \lambda^\prime )$ be differential bundles over $A$ in $(\mathbb{X}, \mathbb{T})$, and let $f: \mathcal{E} \to \mathcal{E}^\prime$ be a differential bundle morphism over $A$ between them. If $f: E \to E^\prime$ is an isomorphism in $\mathbb{X}$, then ${f^{-1}: \mathcal{E}^\prime \to \mathcal{E}}$ is a differential bundle morphism over $A$. Therefore $f$ is a differential bundle isomorphism with inverse $f^{-1}$. 
\end{enumerate}
\end{lemma}
\begin{proof} For $(i)$, we compute: 
\begin{align*}
    g^{-1} \circ \mathsf{q}^\prime = g^{-1} \circ \mathsf{q}^\prime \circ f \circ f^{-1} = g^{-1} \circ g \circ \mathsf{q}  \circ f^{-1} =  \mathsf{q} \circ f^{-1}
\end{align*}
The fact that $(f^{-1},g^{-1})$ is then an isomorphism in the category of differential bundles follows from \cite[Lemma 2.18.ii]{cockett2018differential}.  For $(ii)$, if $f$ is a differential bundle morphism over $A$, then $(f,1_A)$ is a differential bundle morphism. The identity is always an isomorphism, so if $f$ is also an isomorphism, it follows from $(i)$ that $(f^{-1}, 1_A)$ is a differential bundle morphism, which implies that $f^{-1}$ is a differential bundle morphism over $A$ as desired. 
\hfill \end{proof}


\section{Differential Bundles for Commutative Rings}\label{sec:diff_bundles_cring}

In this section, we characterize differential bundles (with negatives) in the tangent category of commutative rings and prove that they correspond precisely to modules (Proposition \ref{prop:mod-diff-ring}). To go from a module to a differential bundle, we use a semi-direct product to build a sort of ring of dual numbers from said module (Lemma \ref{lem:mod-diff}). To go from a differential bundle to a module, we take the kernel of the projection (Lemma \ref{lem:diff-mod}). We then obtain that the category of differential bundles is equivalent to the category of modules (Theorem \ref{thm:mod-diff-ring-1} and Theorem \ref{thm:mod-diff-ring-2}). We will also explain how the only differential object is the zero ring (Corollary \ref{cor:diffobj-rin}). 

\subsection{Tangent Category of Commutative Rings}\label{sec:comringtan} Let $\mathsf{CRING}$ be the category whose objects are commutative rings and whose maps are ring morphisms. We begin by reviewing the canonical tangent structure on $\mathsf{CRING}$, whose tangent bundle is given by the ring of dual numbers. This was one of the main examples in Rosický's original paper \cite[Example 2]{rosicky1984abstract}. 

For a commutative ring $R$, its ring of dual numbers is the commutative ring $R[\varepsilon]$ defined as follows: 
\[ R[\varepsilon] = \lbrace a + b \varepsilon \vert~ \forall a,b \in R, \varepsilon^2 = 0 \rbrace \]
 where $a$ and $b \varepsilon$ will be used respectively as shorthand for $a + 0\varepsilon$ and $0 + b \varepsilon$. Then $R[\varepsilon]$ is a commutative ring with multiplication induced by $\varepsilon^2=0$. 
 
We define a Rosický tangent structure $\rotatebox[origin=c]{180}{$\mathsf{T}$} = (\rotatebox[origin=c]{180}{$\mathsf{T}$}, \mathsf{p}, +, 0, \ell, \mathsf{c}, - )$ on $\mathsf{CRING}$ using dual numbers as follows:  
\begin{enumerate}[{\em (i)}]
\item The endofunctor $\rotatebox[origin=c]{180}{$\mathsf{T}$}: \mathsf{CRING} \to \mathsf{CRING}$ maps a commutative ring $R$ to its ring of dual numbers $\rotatebox[origin=c]{180}{$\mathsf{T}$}(R) = R[\varepsilon]$ and a ring morphism $f: R \to S$ is sent to the ring morphism $\rotatebox[origin=c]{180}{$\mathsf{T}$}(f): R[\varepsilon] \to S[\varepsilon]$ defined as follows:
\[ \rotatebox[origin=c]{180}{$\mathsf{T}$}(f)(a+b \varepsilon) = f(a) + f(b) \varepsilon\]
\item The projection $\mathsf{p}_{R}: R[\varepsilon] \to R$ sends $\varepsilon$ to zero, and so is defined as projecting out the first component:
\[\mathsf{p}_{R}(a+b \varepsilon) = a \]
\end{enumerate}

To describe the pullbacks of the projection, first recall that $\mathsf{CRING}$ is a complete category, and therefore all pullbacks exist in $\mathsf{CRING}$. In particular, if $R$ and $R^\prime$ are commutative rings, then for any ring morphism $f: R^\prime \to R$, the general construction of a pullback of $n$ copies of $f$ in $\mathsf{CRING}$ is given by: 
\[R^\prime_n = \lbrace (x_1, \hdots, x_n) \vert ~ x_j \in R' \text{ s.t. } f(x_i) = f(x_j) \text{ for all } 1 \leq i,j \leq n \rbrace\]
and whose ring structure is given coordinate-wise. However for the projection of the ring of dual numbers, one can instead describe these pullbacks in terms of multivariable dual numbers. So for a commutative ring $R$, define $R[\varepsilon_1, \hdots, \varepsilon_n]$ as follows: 
\[R[\varepsilon_1, \hdots, \varepsilon_n] = \lbrace a + b_1 \varepsilon_1 + \hdots + b_n \varepsilon_n \vert~ \forall a,b_i \in R \text{ and } \varepsilon_i \varepsilon_j = 0 \rbrace \]
Then $R[\varepsilon_1, \hdots, \varepsilon_n]$ is a commutative ring whose structure is defined in the obvious way, so in particular the multiplication is induced by $\varepsilon_i \varepsilon_j = 0$. We leave it as an exercise for the reader to check for themselves that $R[\varepsilon_1, \hdots, \varepsilon_n]$ is indeed isomorphic to the pullback of $n$ copies of $\mathsf{p}_R$. So we can continue to describe the tangent structure as follows: 
\begin{enumerate}[{\em (i)}]
\setcounter{enumi}{2}
\item The pullback of $n$ copies of $\mathsf{p}_{R}$ is given by $\rotatebox[origin=c]{180}{$\mathsf{T}$}_n(R) = R[\varepsilon_1, \hdots, \varepsilon_n]$ and where the proejction $\pi_j: R[\varepsilon_1, \hdots, \varepsilon_n] \to R[\varepsilon]$ sends $\varepsilon_j$ to $\varepsilon$ and the other nilpotents to zero, that is, $\pi_j$ projects out the first component and $j$-th nilpotent component:
\[\pi_j(a + b_1 \varepsilon_1 + \hdots + b_n \varepsilon_n) = a + b_j \varepsilon \]
\item The sum $+_{R}: R[\varepsilon_1, \varepsilon_2] \to R[\varepsilon]$ maps both $\varepsilon_1$ and $\varepsilon_2$ to $\varepsilon$, which results in adding the nilpotent parts together:
\[ +_R( a + b\varepsilon_1 + c \varepsilon_2) = a + (b+c) \varepsilon \]
\item The zero $0_{R}:  R \to R[\varepsilon]$ is the injection of $R$ into its ring of dual numbers:
\[0_R(a) = a \]
\item The negative $-_{R}: R[\varepsilon] \to R[\varepsilon]$ maps $\varepsilon$ to $-\varepsilon$, which results in making the nilpotent part negative:
\[-_{R}(a + b \varepsilon) = a - b \varepsilon\]
\end{enumerate}

To describe the vertical lift and the canonical flip, let us first describe $\rotatebox[origin=c]{180}{$\mathsf{T}$}^2(R)$, the ring of dual numbers of the ring of dual numbers in terms of two nilpotent elements $\varepsilon$ and $\varepsilon^\prime$: 
\begin{align*}
 \rotatebox[origin=c]{180}{$\mathsf{T}$}^2(R) = R[\varepsilon][\varepsilon^\prime] = \lbrace a + b \varepsilon + c \varepsilon^\prime + d \varepsilon \varepsilon^\prime \vert~ \forall a,b,c,d \in R \text{ and } \varepsilon^2 = {\varepsilon^\prime}^2 = 0 \rbrace
\end{align*}
where the multiplication is induced by $\varepsilon^2 = {\varepsilon^\prime}^2 = 0$. So we define: 
\begin{enumerate}[{\em (i)}]
\setcounter{enumi}{6}
\item The vertical lift $\ell_{R}: R[\varepsilon] \to R[\varepsilon][\varepsilon^\prime]$ maps $\varepsilon$ to $\varepsilon^\prime$, and so maps the nilpotent component to the outer nilpotent component:  
\[\ell_{R}(a + b \varepsilon) = a + b \varepsilon\varepsilon^\prime\]
\item The canonical flip $\mathsf{c}_{R}: R[\varepsilon][\varepsilon^\prime] \to R[\varepsilon][\varepsilon^\prime]$ swaps $\varepsilon$ and $\varepsilon^\prime$, and so interchanges the middle nilpotent components: 
\[\mathsf{c}_{R}( a + b \varepsilon + c \varepsilon^\prime + d \varepsilon \varepsilon^\prime ) =  a + c \varepsilon + b \varepsilon^\prime + d \varepsilon \varepsilon^\prime\]
\end{enumerate}
So $\rotatebox[origin=c]{180}{$\mathbb{T}$} = (\rotatebox[origin=c]{180}{$\mathsf{T}$}, \mathsf{p}, +, 0, \ell, \mathsf{c}, - )$ is a Rosický tangent structure on $\mathsf{CRING}$. Also, $\mathsf{CRING}$ has finite products where the binary product is given by the Cartesian product of rings $R \times S$, where recall that the ring structure is given pointwise, and where the terminal object is the zero ring $0$. We also have that $(R \times S)[\varepsilon] \cong R[\varepsilon] \times S[\varepsilon]$ and $0[\varepsilon] \cong 0$. So we have that: 

\begin{lemma} $(\mathsf{CRING}, \rotatebox[origin=c]{180}{$\mathbb{T}$})$ is a Cartesian Rosický tangent category.
\end{lemma}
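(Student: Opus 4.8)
The plan is to verify, one piece of structure at a time, that the sextuple-plus-negative $\rotatebox[origin=c]{180}{$\mathbb{T}$} = (\rotatebox[origin=c]{180}{$\mathsf{T}$}, \mathsf{p}, +, 0, \ell, \mathsf{c}, -)$ described above satisfies Definition \ref{def:rostancat} together with the Cartesian conditions. The entire argument is a sequence of finite computations in (iterated, multivariable) rings of dual numbers, all carried out coefficientwise, so the only genuine content lies in (a) checking every map in sight is an actual ring morphism, (b) identifying the required pullbacks with the multivariable dual number rings, and (c) the universality axiom \textbf{[T.6]}. It is worth recording at the outset that this example is originally due to Rosický, so the value here is in the explicit bookkeeping.

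First I would record well-definedness and functoriality. Each of $\rotatebox[origin=c]{180}{$\mathsf{T}$}(f), \mathsf{p}_R, \pi_j, +_R, 0_R, \ell_R, \mathsf{c}_R, -_R$ is defined by a substitution rule on the nilpotent generators, and each respects addition and multiplication using only the relations $\varepsilon_i \varepsilon_j = 0$; for instance $\rotatebox[origin=c]{180}{$\mathsf{T}$}(f)$ is multiplicative precisely because $\varepsilon^2 = 0$ annihilates the cross term. Functoriality of $\rotatebox[origin=c]{180}{$\mathsf{T}$}$ and naturality of $\mathsf{p}, +, 0, \ell, \mathsf{c}, -$ are then immediate, since applying a ring morphism coefficientwise commutes with each substitution rule. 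I would also invoke completeness of $\mathsf{CRING}$ so that all limits exist, record the concrete identification $\rotatebox[origin=c]{180}{$\mathsf{T}$}_n(R) \cong R[\varepsilon_1, \dots, \varepsilon_n]$ of the $n$-fold pullback of $\mathsf{p}_R$ (left to the reader in the text), and note that because limits in $\mathsf{CRING}$ are computed on underlying sets with componentwise operations, and $\rotatebox[origin=c]{180}{$\mathsf{T}$}^m$ merely adjoins further independent nilpotents, every $\rotatebox[origin=c]{180}{$\mathsf{T}$}^m$ preserves these pullbacks.

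Next I would dispatch \textbf{[T.1]}--\textbf{[T.5]} and \textbf{[T.N]} by direct evaluation on a general element. These are all bookkeeping identities among the nilpotent monomials $1, \varepsilon, \varepsilon', \varepsilon\varepsilon'$ (with triple-nilpotent analogues for the axioms living in $\rotatebox[origin=c]{180}{$\mathsf{T}$}^3$). For example \textbf{[T.N]} reduces to $+_R \langle 1, -_R\rangle(a + b\varepsilon) = a + (b-b)\varepsilon = a = 0_R \mathsf{p}_R(a + b\varepsilon)$; the involution half of \textbf{[T.4]} is the observation that swapping $\varepsilon \leftrightarrow \varepsilon'$ twice is the identity while fixing the symmetric monomial $\varepsilon\varepsilon'$; and \textbf{[T.2]}, \textbf{[T.3]} express that the rules defining $\ell$ and $\mathsf{c}$ commute with adding and zeroing nilpotent parts. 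I would present one or two of these in full and assert the rest are identical in spirit.

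The substantive step, and the one I expect to be the main obstacle, is the universality of the vertical lift \textbf{[T.6]}. Since limits in $\mathsf{CRING}$ are computed concretely, I would simply compute the pullback of $0_R \colon R \to \rotatebox[origin=c]{180}{$\mathsf{T}$}(R)$ along $\rotatebox[origin=c]{180}{$\mathsf{T}$}(\mathsf{p}_R) \colon \rotatebox[origin=c]{180}{$\mathsf{T}$}^2(R) \to \rotatebox[origin=c]{180}{$\mathsf{T}$}(R)$. Writing a general element of $\rotatebox[origin=c]{180}{$\mathsf{T}$}^2(R) = R[\varepsilon][\varepsilon']$ as $a + b\varepsilon + c\varepsilon' + d\varepsilon\varepsilon'$, one computes $\rotatebox[origin=c]{180}{$\mathsf{T}$}(\mathsf{p}_R)(a + b\varepsilon + c\varepsilon' + d\varepsilon\varepsilon') = a + c\varepsilon'$, so the fibre product over $0_R$ forces $c = 0$ and identifies the pullback with $\{a + b\varepsilon + d\varepsilon\varepsilon'\} \cong R[\varepsilon_1, \varepsilon_2] = \rotatebox[origin=c]{180}{$\mathsf{T}$}_2(R)$. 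It then remains to compute the comparison map $\nu_R$ of \eqref{nudef}: unwinding the pairing into $\rotatebox[origin=c]{180}{$\mathsf{T}$}\rotatebox[origin=c]{180}{$\mathsf{T}$}_2(R)$ and applying $\rotatebox[origin=c]{180}{$\mathsf{T}$}(+_R)$ yields $\nu_R(a + b\varepsilon_1 + c\varepsilon_2) = a + c\varepsilon + b\varepsilon\varepsilon'$, which is visibly a bijection onto this pullback, so the square is a pullback; preservation by every $\rotatebox[origin=c]{180}{$\mathsf{T}$}^n$ follows from the same computation with extra nilpotents adjoined. (Alternatively, since we have negatives, one could verify the equalizer form \textbf{[T.6']} instead.) Finally, the Cartesian conditions are the evident isomorphisms $(R \times S)[\varepsilon] \cong R[\varepsilon] \times S[\varepsilon]$ --- which is precisely the canonical comparison $\langle \rotatebox[origin=c]{180}{$\mathsf{T}$}(\pi_1), \rotatebox[origin=c]{180}{$\mathsf{T}$}(\pi_2)\rangle$ --- together with $0[\varepsilon] \cong 0$ for the zero ring, completing the verification.
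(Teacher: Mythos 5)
Your proposal is correct and matches the paper's approach: the paper simply records the explicit dual-numbers structure and then states the lemma with no further proof (deferring to Rosick\'y's original example), so your coefficientwise verifications are precisely the details the paper leaves to the reader. In particular, your treatment of \textbf{[T.6]} — computing that the fibre of $\rotatebox[origin=c]{180}{$\mathsf{T}$}(\mathsf{p}_R)$ over $0_R$ consists of the elements $a + b\varepsilon + d\varepsilon\varepsilon'$ and that $\nu_R(a + b\varepsilon_1 + c\varepsilon_2) = a + c\varepsilon + b\varepsilon\varepsilon'$ is a ring isomorphism onto it — is the one substantive check, and it is right.
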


\begin{remark} This tangent category construction nicely generalizes to other settings:
\begin{itemize}
    \item Instead of commutative rings, we could have considered commutative unital semirings (also called rigs, for rings without negatives), which are of particular interest throughout all of computer science. So the category of commutative semirings will be a Cartesian tangent category via dual numbers, but not a Cartesian Rosický tangent category since we dropped negatives. 
    \item For any commutative (semi)ring $R$, the category of commutative $R$-algebras will also be a Cartesian tangent category; this follows from Proposition \ref{prop:slice_tan_cat}.
    \item The Eilenberg-Moore category of a codifferential category (or dually the opposite category of the coEilenberg-Moore category of a differential category) is a Cartesian tangent category \cite[Theorem 22]{cockett_et_al:LIPIcs:2020:11660}, whose tangent structure is indeed a generalization of the above dual numbers tangent structure. In fact, the tangent categories of commutative (semi)rings/algebras are precisely the Eilenberg-Moore categories of the appropriate polynomial models of codifferential categories.  
\end{itemize}
\end{remark}

\subsection{From Differential Bundles to Modules}

We begin by unpacking what a differential bundle with negatives would consist of in $(\mathsf{CRING}, \rotatebox[origin=c]{180}{$\mathbb{T}$})$. First recall that $(\mathsf{CRING}, \rotatebox[origin=c]{180}{$\mathbb{T}$})$ is a Cartesian Rosický tangent category, so by Proposition \ref{prop:Ben2}, differential bundles are the same thing as differential bundles with negatives. Also, as discussed in Section \ref{sec:comringtan}, $\mathsf{CRING}$ admits all pullbacks, so for any ring morphism ${\mathsf{q}: E \to R}$ between commutative rings, the general construction of a pullback of $n$ copies of $\mathsf{q}$ in $\mathsf{CRING}$ is given by: 
\[E_n = \lbrace (x_1, \hdots, x_n) \vert ~ x_j \in E \text{ s.t. } \mathsf{q}(x_i) = \mathsf{q}(x_j) \text{ for all } 1 \leq i,j \leq n \rbrace\]
and whose ring structure is given coordinate-wise. In particular, for the case $n=2$: 
\[E_2 = \lbrace (x,y) \vert ~ x,y \in E \text{ s.t. } \mathsf{q}(x) = \mathsf{q}(y) \rbrace\]
So for a commutative ring $R$, a differential bundle with negatives over $R$ in $(\mathsf{CRING}, \rotatebox[origin=c]{180}{$\mathbb{T}$})$ would consist of a commutative ring $E$ and five ring morphisms: $\mathsf{q}: E \to R$, $\sigma: E_2 \to E$, ${\mathsf{z}: R \to E}$, $\lambda: E \to E[\varepsilon]$, and $\iota: E \to E$. We will expand further upon on many of the equalities and properties these maps satisfy in the proof of Lemma \ref{lem:betaiso} below. To obtain an $R$-module, we take the kernel of the projection $\mathsf{q}$. 

\begin{lemma} \label{lem:diff-mod} Let $R$ be a commutative ring and $\mathcal{E} = (\mathsf{q}: E \to R, \sigma, \mathsf{z}, \lambda, \iota)$ be a differential bundle with negatives over $R$ in $(\mathsf{CRING}, \rotatebox[origin=c]{180}{$\mathbb{T}$})$. Then the kernel of the projection:
\[\mathsf{ker}(\mathsf{q}) = \lbrace x \vert~ \mathsf{q}(x) = 0 \rbrace\] 
is an $R$-module with action $a \cdot x = \mathsf{z}(a) x$. 
\end{lemma}
\begin{proof} Since $\mathsf{q}: E \to R$ is a ring morphism, this induces an $R$-module structure on $E$ with action $a \cdot e = \mathsf{z}(a) e$. Then viewing $R$ as an $R$-module with action given by multiplication $a \cdot b = ab$, this makes the projection $\mathsf{q}: E \to R$ an $R$-linear morphism. Therefore since the kernel of an $R$-linear morphism is always an $R$-module, we indeed have that $\mathsf{ker}(\mathsf{q})$, with the same action as $E$, is an $R$-module. 
\end{proof}

\subsection{From Modules to Differential Bundles}

We now construct a differential bundle from a module. For a commutative ring $R$ and an $R$-module $M$, define $M[\varepsilon]$ as follows: 
\[ M[\varepsilon] = \lbrace a + m \varepsilon \vert~ a \in R, m \in M \text{ and } \varepsilon^2 =0 \rbrace \] 
where $a$ and $m \varepsilon$ will be used respectively as shorthand for $a + 0\varepsilon$ and $0 + m \varepsilon$. Then $M[\varepsilon]$ is a commutative ring with multiplication induced by $\varepsilon^2=0$, that is, $(a + m \varepsilon) (b + n \varepsilon) = ab + (a \cdot n+ b \cdot m) \varepsilon$, and unit $1$. Note that when $M = R$ and the action is given by multiplication $a \cdot b = ab$, then this construction gives us the ring of dual numbers over $R$, or in other words, the tangent bundle $\rotatebox[origin=c]{180}{$\mathsf{T}$}(R) = R[\varepsilon]$.

 We now give $M[\varepsilon]$ the structure of a differential bundle over $R$.  
\begin{enumerate}[{\em (i)}]
\item The projection $\mathsf{q}_M: M[\varepsilon] \to R$ is defined as projecting out the $R$ component:
\[ \mathsf{q}_{M}(a+ m \varepsilon) = a \]
\end{enumerate}

As noted above, there is a general construction of pullbacks in $\mathsf{CRING}$. However for the projection $\mathsf{q}_M: M[\varepsilon] \to R$, we can instead describe these pullbacks in terms of multivariable dual numbers, like for the pullbacks of the tangent bundle. So define $M[\varepsilon_1, \hdots, \varepsilon_n]$ as follows: 
\[M[\varepsilon_1, \hdots, \varepsilon_n] = \lbrace a + m_1 \varepsilon_1 + \hdots + m_n \varepsilon_n \vert~ \forall a \in R, m_j \in M \text{ and } \varepsilon_i \varepsilon_j = 0 \rbrace \]
Then $M[\varepsilon_1, \hdots, \varepsilon_n]$ is a commutative ring whose structure is defined in the obvious way, so in particular the multiplication is induced by $\varepsilon_i \varepsilon_j = 0$. We leave it as an exercise for the reader to check for themselves that $M[\varepsilon_1, \hdots, \varepsilon_n]$ is the pullback of $n$ copies of $\mathsf{q}_M$. We can then describe the rest of the differential bundle structure as follows: 
\begin{enumerate}[{\em (i)}]
\setcounter{enumi}{1}
\item The pullback of $n$ copies of $\mathsf{p}_{R}$ is given by $M[\varepsilon]_n = M[\varepsilon_1, \hdots, \varepsilon_n]$ and where the pullback projection $\pi_j: M[\varepsilon_1, \hdots, \varepsilon_n] \to M[\varepsilon]$ sends $\varepsilon_j$ to $\varepsilon$ and the other nilpotents to zero, that is, $\pi_j$ projects out the $R$ component and $j$-th $M$ component:
\[\pi_j(a + m_1 \varepsilon_1 + \hdots + m_n \varepsilon_n) = a + m_j \varepsilon \]
\item The sum $\sigma: M[\varepsilon_1, \varepsilon_2] \to M[\varepsilon]$ maps both $\varepsilon_1$ and $\varepsilon_2$ to $\varepsilon$, which results in adding the $M$ components together:
\[ \sigma( a + m\varepsilon_1 + n \varepsilon_2) = a + (m+n) \varepsilon \]
\item The zero $\mathsf{z}:  R \to M[\varepsilon]$ is the injection of $R$ into the $R$ component:
\[\mathsf{z}(a) = a \]
\item The negative $\iota: M[\varepsilon] \to M[\varepsilon]$ maps $\varepsilon$ to $-\varepsilon$, which results in making the $M$ component negative:
\[\iota(a + m \varepsilon) = a - m \varepsilon\]
\end{enumerate}
To describe the lift, let us describe $\rotatebox[origin=c]{180}{$\mathsf{T}$}\left(M[\varepsilon]  \right)$, the ring of dual numbers of $M[\varepsilon]$ in terms of two nilpotent elements $\varepsilon$ and $\varepsilon^\prime$: 
\begin{align*}
\rotatebox[origin=c]{180}{$\mathsf{T}$}\left(M[\varepsilon]  \right) = M[\varepsilon][\varepsilon^\prime] = \lbrace a + m \varepsilon + b \varepsilon^\prime + n \varepsilon \varepsilon^\prime \vert~ \forall a,b \in R, m,n \in M \text{ and } \varepsilon^2 = {\varepsilon^\prime}^2 = 0 \rbrace
\end{align*}
where the multiplication is induced by $\varepsilon^2 = {\varepsilon^\prime}^2 = 0$. So we define: 
\begin{enumerate}[{\em (i)}]
\setcounter{enumi}{6}
\item The lift $\lambda: M[\varepsilon] \to M[\varepsilon][\varepsilon^\prime]$ maps $\varepsilon$ to $\varepsilon \varepsilon^\prime$, and so maps the $R$ component of $M[\varepsilon]$ to the first $R$ component of $M[\varepsilon][\varepsilon^\prime]$, and the $M$ component of $M[\varepsilon]$ to the second $M$ component of $M[\varepsilon][\varepsilon^\prime]$:
\[\lambda(a + m \varepsilon) = a + m \varepsilon\varepsilon^\prime\]
\end{enumerate}
We leave it as an exercise for the reader to check that these are all well-defined ring morphisms. 

\begin{lemma}\label{lem:mod-diff} For every commutative ring $R$ and $R$-module $M$, \[ \rotatebox[origin=c]{180}{$\mathsf{M}$}_R(M) := (\mathsf{q}_M, \sigma_M, \mathsf{z}_M, \lambda_M, \iota_M) \] is a differential bundle with negatives over $R$ in $(\mathsf{CRING}, \rotatebox[origin=c]{180}{$\mathbb{T}$})$. 
\end{lemma}
\begin{proof} To show that we have a differential bundle, we will instead show that we have a pre-differential bundle which satisfies (\ref{prediff-i}) and (\ref{prediff-ii}) in Proposition \ref{prop:prediff}. To show that $(\mathsf{q}_M, \mathsf{z}_M, \lambda_M)$ is a pre-differential bundle, we must show that the four equalities from Definition \ref{def:prediffbun} hold, but these all follow from straightforward computation, which we leave to the reader.   
    
    Next, we must show that this pre-differential bundle also satisfies the extra assumptions required to make it a differential bundle. Firstly, as mentioned above, $M[\varepsilon_1, \hdots, \varepsilon_n]$ is indeed the pullback of $n$ copies of the projection $\mathsf{q}_M$. Also, since $\rotatebox[origin=c]{180}{$\mathsf{T}$}$ is a right adjoint\footnote{This is a standard result in commutative algebra; see also the discussion after Lemma \ref{lemma:cringOpisTan}.}, it preserves all limits, and therefore all $\rotatebox[origin=c]{180}{$\mathsf{T}$}^n$ preserve these pullbacks. So $(\mathsf{q}_M, \mathsf{z}_M, \lambda_M)$ satisfies assumption (\ref{prediff-i}) of Proposition \ref{prop:prediff}. Next, we must show that the following square is a pullback: 
     \begin{equation}\label{}\begin{gathered} 
  \xymatrixcolsep{5pc}\xymatrix{ M[\varepsilon] \ar[r]^-{\lambda_M}  \ar[d]_-{\mathsf{q}_M} & M[\varepsilon][\varepsilon^\prime] \ar[d]^-{\langle \rotatebox[origin=c]{180}{$\mathsf{T}$}(\mathsf{q}_M), \mathsf{p}_{M[\varepsilon]} \rangle} \\
 R \ar[r]_-{\langle 0_R, \mathsf{z}_M \rangle}  & R[\varepsilon] \times M[\varepsilon] }  \end{gathered}\end{equation}
So suppose $S$ is a commutative ring, and we have ring morphisms $f: S \to M[\varepsilon][\varepsilon^\prime]$ and $g: S \to R$ such that $\langle \rotatebox[origin=c]{180}{$\mathsf{T}$}(\mathsf{q}_M), \mathsf{p}_{M[\varepsilon]} \rangle \circ f = \langle 0_R, \mathsf{z}_M \rangle \circ g$, that is, for every $x \in S$ the following equality holds: 
\begin{align*}
  \left( \rotatebox[origin=c]{180}{$\mathsf{T}$}(\mathsf{q}_M)(f(x)), \mathsf{p}_{M[\varepsilon]}(f(x)) \right)  = \left( g(x), g(x) \right) 
\end{align*}
    Now $f(x) \in M[\varepsilon][\varepsilon^\prime]$ is of the form: $f(x) = f_1(x) + f_2(x)\varepsilon + f_3(x)\varepsilon^\prime + f_4(x)\varepsilon\varepsilon^\prime$ for some $f_1(x),f_3(x) \in R$ and $f_2(x), f_4(x) \in M$. Then the above equality tells us that:
  \begin{align*}
& (g(x), g(x)) =  \left( \rotatebox[origin=c]{180}{$\mathsf{T}$}(\mathsf{q}_M)(f(x)), \mathsf{p}_{M[\varepsilon]}(f(x)) \right) \\
 &= \left( \mathsf{q}_M ( f_1(x) + f_2(x)\varepsilon) + \mathsf{q}_M( f_3(x) + f_4(x)\varepsilon ) \varepsilon , \mathsf{p}_{M[\varepsilon]}\left( f_1(x) + f_2(x)\varepsilon + f_3(x)\varepsilon^\prime + f_4(x)\varepsilon\varepsilon^\prime \right) \right) \\
 &= \left( f_1(x) + f_3(x) \varepsilon, f_1(x) + f_2(x) \varepsilon \right)
\end{align*}  
So this implies that $g(x) = f_1(x) + f_2(x) \varepsilon$ and $g(x) = f_1(x) + f_3(x) \varepsilon$. However, in both equalities, the left-hand side has no nilpotent component. Therefore, we have that $g(x) = f_1(x)$, $f_2(x) =0$, and $f_3(x) =0$. So $f(x) = g(x) + f_4(x) \varepsilon\varepsilon^\prime$. Then define $\langle f,g \rangle: S \to M[\varepsilon]$ to be $f$ but without $\varepsilon^\prime$, that is, as follows: 
\begin{align}
  \langle f, g \rangle(x) = g(x) + f_4(x) \varepsilon  
\end{align}
That $\langle f,g \rangle$ is a ring morphism essentially follows from the fact that $f$ is a ring morphism. Next we compute that $\langle f,g \rangle$ also satisfies the following:
\begin{align*}
    \lambda_M (\langle f,g \rangle(x) ) = \lambda_M ( g(x) + f_4(x) \varepsilon ) =  g(x) + f_4(x) \varepsilon\varepsilon^\prime = f(x) 
    \end{align*}
    \begin{align*}
   \mathsf{q}_M (\langle f,g \rangle(x) ) = \mathsf{q}_M ( g(x) + f_4(x) \varepsilon ) = g(x) 
\end{align*}
So $\lambda_M \circ \langle f,g \rangle = f$ and $\mathsf{q}_M \circ \langle f,g \rangle = g$ as desired. Lastly, it remains to show that $\langle f, g \rangle$ is the unique such ring morphism. So suppose we have a ring morphism $h: S \to M[\varepsilon]$ such that $\lambda_M \circ h = f$ and $\mathsf{q}_M \circ h = g$. Now $h(x) \in M[\varepsilon]$ is of the form $h(x) = h_1(x) + h_2(x) \varepsilon$ for some $h_1(x) \in R$ and $h_2(x) \in M$. By assumption, we have that:
 \begin{align*}
 g(x) + f_4(x) \varepsilon\varepsilon^\prime = f(x) =    \lambda_M (h(x) ) = \lambda_M ( h_1(x) + h_2(x) \varepsilon ) =  h_1(x) + h_2(x) \varepsilon\varepsilon^\prime 
\end{align*}
So $g(x) + f_4(x) \varepsilon\varepsilon^\prime=  h_1(x) + h_2(x) \varepsilon\varepsilon^\prime$, which implies that $h_1(x) = g(x)$ and $h_2(x) = f_4(x)$. Therefore, $h(x) = g(x) + f_4(x) \varepsilon = \langle f , g \rangle(x)$, and so $\langle f, g \rangle$ is unique. So we conclude that the above square is a pullback diagram. Furthermore, since $\rotatebox[origin=c]{180}{$\mathsf{T}$}$ is a right adjoint, we also have that $\rotatebox[origin=c]{180}{$\mathsf{T}$}^n$ preserves these pullbacks. Thus $(\mathsf{q}_M, \mathsf{z}_M, \lambda_M)$ satisfies assumption (\ref{prediff-ii}) of Proposition \ref{prop:prediff}. Therefore, $(\mathsf{q}_M, \mathsf{z}_M, \lambda_M)$ will induce a differential bundle with negatives. 

It remains to construct the sum and the negative as in Proposition \ref{prop:prediff}, and show that these are the same as the proposed $\sigma$ and $\iota$ above. The sum $\sigma$ will be given by: 
\[  \sigma = \left \langle +_{M[\varepsilon]} \circ \langle \lambda_M \circ \pi_1, \lambda_M \circ \pi_2 \rangle , \mathsf{q}_M \circ \pi_j \right \rangle  \]
We leave it to the reader to check for themselves that the following equalities hold: 
\begin{align*}
     +_{M[\varepsilon]}\left( \langle \lambda_M \circ \pi_1, \lambda_M \circ \pi_2 \rangle (a + m\varepsilon_1 + n\varepsilon_2) \right) = a + (m+n)\varepsilon\varepsilon^\prime 
\end{align*}
Therefore by construction, we have that $\sigma( a + m\varepsilon_1 + n \varepsilon_2) = a + (m+n) \varepsilon$ as desired. The negative $\iota$ will be given by: 
\begin{align*}
\iota = \left \langle -_{M[\varepsilon]} \circ \lambda_M,  \mathsf{q}_M \right \rangle
\end{align*}
We then compute that: 
\[ -_{M[\varepsilon]}(\lambda_M(a+m \varepsilon)) = a - m \varepsilon\varepsilon^\prime \]
So by construction, we have that $\iota(a+ m\varepsilon) = a - m \varepsilon$. So we conclude that $\rotatebox[origin=c]{180}{$\mathsf{M}$}_R(M) = (\mathsf{q}_M, \sigma_M, \mathsf{z}_M, \lambda_M, \iota_M)$ is a differential bundle with negatives over $R$. 
\end{proof}

\subsection{Equivalence}

We will now show that the constructions of Lemma \ref{lem:diff-mod} and Lemma \ref{lem:mod-diff} are inverses of each other.

Beginning from the module side of things, let $R$ be a commutative ring and $M$ be an $R$-module. Consider $\mathsf{ker}(\mathsf{q}_M)$, the kernel of the projection of the induced differential bundle $\rotatebox[origin=c]{180}{$\mathsf{M}$}_R(M)$. However, $\mathsf{q}_M(a +m \varepsilon) =0$ implies that $a=0$. So the kernel of the projection consists solely of the $M$ component, that is, $\mathsf{ker}(\mathsf{q}_M) = \lbrace m \varepsilon \vert~ \forall m \in M \rbrace$, which is clearly isomorphic to $M$. Explicitly, $\alpha_M: M \to \mathsf{ker}(\mathsf{q}_M)$ is defined as $\alpha_M(m) = m \varepsilon$, and $\alpha^{-1}_M: \mathsf{ker}(\mathsf{q}_M) \to M$ is defined as $\alpha^{-1}_M(m \varepsilon) = m$. 

\begin{lemma}\label{lem:alphaiso} For every commutative ring $R$ and $R$-module $M$, $\alpha_M: M \to \mathsf{ker}(\mathsf{q}_M)$ is an $R$-linear isomorphism with inverse $\alpha^{-1}_M: \mathsf{ker}(\mathsf{q}_M) \to M$.    
\end{lemma}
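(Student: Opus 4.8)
The plan is to verify directly that $\alpha_M$ respects the $R$-module structures on both sides and is a bijection with inverse $\alpha^{-1}_M$. The only point requiring genuine care is to pin down the correct module structure on the kernel, since everything else is an immediate computation.

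First I would recall, via Lemma \ref{lem:diff-mod}, that $\mathsf{ker}(\mathsf{q}_M)$ carries the $R$-action $a \cdot x = \mathsf{z}_M(a)\, x$, where the product on the right is taken in the ring $M[\varepsilon]$. Since $\mathsf{ker}(\mathsf{q}_M) = \lbrace m\varepsilon \mid m \in M \rbrace$ and $\mathsf{z}_M(a) = a$, I would compute this action explicitly using the multiplication rule of $M[\varepsilon]$:
\[ a \cdot (m\varepsilon) = a(m\varepsilon) = (a \cdot m)\varepsilon, \]
where $a \cdot m$ on the right is the original $R$-action on $M$. Thus the $R$-module structure on $\mathsf{ker}(\mathsf{q}_M)$ is precisely the action of $M$ transported along the assignment $m \mapsto m\varepsilon$, which is exactly what makes $\alpha_M$ a candidate isomorphism.

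Next I would check that $\alpha_M$ is $R$-linear. Additivity is immediate from the addition rule of $M[\varepsilon]$, namely $\alpha_M(m+n) = (m+n)\varepsilon = m\varepsilon + n\varepsilon = \alpha_M(m) + \alpha_M(n)$, and compatibility with the action follows at once from the computation above: $\alpha_M(a \cdot m) = (a \cdot m)\varepsilon = a \cdot (m\varepsilon) = a \cdot \alpha_M(m)$. Finally I would confirm that $\alpha_M$ and $\alpha^{-1}_M$ are mutually inverse, since $\alpha^{-1}_M(\alpha_M(m)) = \alpha^{-1}_M(m\varepsilon) = m$ and $\alpha_M(\alpha^{-1}_M(m\varepsilon)) = \alpha_M(m) = m\varepsilon$ hold by definition; hence $\alpha^{-1}_M$ is a two-sided inverse and $\alpha_M$ is an $R$-linear isomorphism.

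There is no real obstacle in this argument. The single subtlety worth flagging is that one must identify the module action on $\mathsf{ker}(\mathsf{q}_M)$ as the one induced by $\mathsf{z}_M$ through Lemma \ref{lem:diff-mod}, rather than simply declaring the evident additive bijection to be $R$-linear; once that action is unwound to $(a \cdot m)\varepsilon$, $R$-linearity of $\alpha_M$ is forced.
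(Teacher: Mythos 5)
Your proposal is correct and follows essentially the same route as the paper's proof: both unwind the action $a \cdot x = \mathsf{z}_M(a)x$ from Lemma \ref{lem:diff-mod} via the multiplication in $M[\varepsilon]$ to get $(a \cdot m)\varepsilon$, check $R$-linearity of $\alpha_M$ by this computation, and verify the maps are mutually inverse (with $R$-linearity of $\alpha^{-1}_M$ then automatic, as the paper notes explicitly). No gaps; the emphasis you place on identifying the kernel's module structure is exactly the point the paper's computation turns on.
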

\begin{proof} Clearly for every $R$-module $M$, $\alpha_M$ and $\alpha^{-1}_M$ are inverses of each other, that is, $\alpha^{-1}_M(\alpha_M(m)) = m$ and $\alpha_M(\alpha^{-1}_M(m \varepsilon)) = m \varepsilon$. However, we must explain why $\alpha_M$ and $\alpha^{-1}_M$ are also $R$-linear morphisms. Clearly, they both preserve the addition, so we must show that they preserve the action. We start by showing that $\alpha_M$ does, where recall that the action on $\mathsf{ker}(\mathsf{q}_M)$ is defined as $a \cdot (m\varepsilon) = \mathsf{z}_M(a) m\varepsilon$: 
\begin{align*}
\alpha_M( a \cdot m) =  (a \cdot m) \varepsilon = (a + 0\varepsilon) m \varepsilon = \mathsf{z}_M(a) m\varepsilon = a \cdot m\varepsilon =  a \cdot \alpha_M(m)
\end{align*}
So $\alpha_M$ is an $R$-module morphism. Since $\alpha_M$ and $\alpha^{-1}_M$ are inverses as functions, it then follows that $\alpha^{-1}_M$ will also be an $R$-module morphism. So we conclude that $\alpha_M$ and $\alpha^{-1}_M$ are inverse $R$-linear isomorphisms. 
\end{proof}

Let's now start instead from the differential bundle side of the story. So let $\mathcal{E} =({\mathsf{q}: E \to R}, \sigma, \mathsf{z}, \lambda, \iota)$ be a differential bundle with negatives over a commutative ring $R$ in $(\mathsf{CRING}, \rotatebox[origin=c]{180}{$\mathbb{T}$})$. To define differential bundle isomorphisms between $\mathcal{E}$ and $\rotatebox[origin=c]{180}{$\mathsf{M}$}_R\left(\mathsf{ker}(\mathsf{q}) \right)$, we will first need to define a ring isomorphism between $E$ and $\mathsf{ker}(\mathsf{q})[\varepsilon]$. To do so, we must first take a closer look at the lift $\lambda: E \to E[\varepsilon]$. Since the lift is a ring morphism whose codomain is a ring of dual numbers, it is well-known that it must be of the following form:
$\lambda(x) = \mathsf{p}_E(\lambda(x)) + \mathsf{D}_\lambda(x)\varepsilon$, where $\mathsf{D}_\lambda: E \to E$ is a derivation. Now by the second diagram of (\ref{prediffbuneq}), we have that $\mathsf{p}_E \circ \lambda = \mathsf{z} \circ \mathsf{q}$. This implies that the lift is in fact of the form:
\[ \lambda(x) = \mathsf{z}(\mathsf{q}(x)) + \mathsf{D}_\lambda(x)\varepsilon \]
and the product rule for the derivation $\mathsf{D}_\lambda$ is given by $\mathsf{D}_\lambda(xy) = \mathsf{z}(\mathsf{q}(x)) \mathsf{D}_\lambda(y) + \mathsf{z}(\mathsf{q}(y)) \mathsf{D}_\lambda(x)$. Then define the function $\beta_\mathcal{E}: E \to \mathsf{ker}(\mathsf{q})[\varepsilon]$ as follows: 
\begin{align}
   \beta_\mathcal{E}(x) = \mathsf{q}(x) + \mathsf{D}_\lambda(x) \varepsilon
\end{align}
To define its inverse $\beta^{-1}_\mathcal{E}: \mathsf{ker}(\mathsf{q})[\varepsilon] \to E$, we will need to make use of Rosický's universality diagram (\ref{prediffbunpb}). First, define the ring morphism $\zeta_\mathcal{E}: \mathsf{ker}(\mathsf{q})[\varepsilon] \to E[\varepsilon]$ as $\zeta_\mathcal{E}(a + x \varepsilon) = \mathsf{z}(a) + x \varepsilon$. By universality of the pullback, define $\beta^{-1}_\mathcal{E}: \mathsf{ker}(\mathsf{q})[\varepsilon] \to E$ as the unique ring morphism which makes the following diagram commute: 
 \begin{equation}\label{betainvdef}\begin{gathered} 
 \xymatrixcolsep{5pc}\xymatrix{ \mathsf{ker}(\mathsf{q})[\varepsilon] \ar@/_/[ddr]_-{\mathsf{q}_{\mathsf{ker}(\mathsf{q})}} \ar@/^/[drr]^-{\zeta_\mathcal{E}} \ar@{-->}[dr]^-{\beta^{-1}_\mathcal{E}} \\
& E \ar[d]_-{\mathsf{q}}  \ar[r]^-{\lambda} &  E[\varepsilon]\ar[d]^-{\langle \rotatebox[origin=c]{180}{$\mathsf{T}$}(\mathsf{q}), \mathsf{p}_E \rangle} \\
& R \ar[r]_-{\langle 0_R, \mathsf{z} \rangle} & R[\varepsilon] \times E }  \end{gathered}\end{equation} 
so $\beta^{-1}_\mathcal{E} = \langle \mathsf{q}_{\mathsf{ker}(\mathsf{q})}, \zeta_\mathcal{E} \rangle$. We will show below that $\beta^{-1}_\mathcal{E}$ is a differential bundle morphism; it then follows from the compatibility with the lift that $\beta^{-1}_\mathcal{E}(a + x \varepsilon) = \mathsf{z}(a) + x$. 

\begin{lemma} \label{lem:betaiso} For a commutative ring $R$ and a differential bundle with negatives $\mathcal{E} = (\mathsf{q}: E \to R, \sigma, \mathsf{z}, \lambda, \iota)$ over $R$ in $(\mathsf{CRING}, \rotatebox[origin=c]{180}{$\mathbb{T}$})$, ${\beta_\mathcal{E}: \mathcal{E} \to \rotatebox[origin=c]{180}{$\mathsf{M}$}_R\left(\mathsf{ker}(\mathsf{q}) \right)}$ is a differential bundle isomorphism over $R$ with inverse ${\beta^{-1}_\mathcal{E}: \rotatebox[origin=c]{180}{$\mathsf{M}$}_R\left(\mathsf{ker}(\mathsf{q}) \right) \to \mathcal{E}}$. 
\end{lemma}
\begin{proof} We first explain why $\beta_\mathcal{E}$ and $\beta^{-1}_\mathcal{E}$ are well-defined ring morphisms. Starting with $\beta_\mathcal{E}$, we must first explain why $\mathsf{D}_\lambda(x)$ is in the kernel of the projection $\mathsf{q}$. One of the differential bundle axioms tells us that $\rotatebox[origin=c]{180}{$\mathsf{T}$}(\mathsf{q}) \circ \lambda = 0_R \circ \mathsf{q}$. Therefore, for all $x \in E$, we have that $\mathsf{q}(\mathsf{z}(\mathsf{q}(x))) + \mathsf{q}(\mathsf{D}_\lambda(x)) \varepsilon = \mathsf{q}(x)$. Since the right-hand side has no nilpotent component, this implies that $\mathsf{q}(\mathsf{D}_\lambda(x))= 0$. So for all $x \in E$, $\mathsf{D}_\lambda(x) \in  \mathsf{ker}(\mathsf{q})$, and so $\beta_\mathcal{E}$ is well-defined. We leave it to the reader to check for themselves that $\beta_\mathcal{E}$ is indeed a ring morphism. 

Next we explain why $\beta^{-1}_\mathcal{E}$ is a well-defined. To do so, we must show that the outer diagram of (\ref{betainvdef}) commutes. First, note that by the first diagram of (\ref{prediffbuneq}), $\mathsf{q} \circ \mathsf{z} = 1_R$, so $\mathsf{q}( \mathsf{z}( a)) = a$ for all $a \in R$. Then for all $a \in R$ and $x \in \mathsf{ker}(\mathsf{q})$ we compute: 
\begin{align*}&\langle \rotatebox[origin=c]{180}{$\mathsf{T}$}(\mathsf{q}), \mathsf{p}_E \rangle \left( \zeta_\mathcal{E}( a+ x \varepsilon) \right) 
=
\langle \rotatebox[origin=c]{180}{$\mathsf{T}$}(\mathsf{q}), \mathsf{p}_E \rangle \left( \mathsf{z}( a) + x \varepsilon \right)
= \left( \rotatebox[origin=c]{180}{$\mathsf{T}$}(\mathsf{q})( \mathsf{z}( a) + x \varepsilon ) , \mathsf{p}_E( \mathsf{z}( a) + x \varepsilon )  \right) \\
&=
\left( \mathsf{q}( \mathsf{z}( a)) +  \mathsf{q}(x) \varepsilon ) , \mathsf{p}_E( \mathsf{z}( a) + x \varepsilon )  \right)
= 
\left( a, \mathsf{z}(a) \right)
= 
\left( 0_R(a), \mathsf{z}(a) \right) \\
& =
\langle 0_R, \mathsf{z} \rangle (a)
= 
\langle 0_R, \mathsf{z} \rangle \left( \mathsf{q}_{\mathsf{ker}(\mathsf{q})}(a + x \varepsilon) \right) 
\end{align*}
So $\langle \rotatebox[origin=c]{180}{$\mathsf{T}$}(\mathsf{q}), \mathsf{p}_E \rangle \circ \zeta_\mathcal{E} = \langle 0_R, \mathsf{z} \rangle \circ \mathsf{q}_{\mathsf{ker}(\mathsf{q})}$. Therefore, by the universal property of the pullback square, there exists a unique ring morphism $\beta^{-1}_\mathcal{E}: \mathsf{ker}(\mathsf{q})[\varepsilon] \to E$ such that $\lambda \circ \beta^{-1}_\mathcal{E} = \zeta_\mathcal{E}$ and $\mathsf{q} \circ \beta^{-1}_\mathcal{E} = \mathsf{q}_{\mathsf{ker}(\mathsf{q})}$. In particular, these imply that for every $a \in R$ and $x \in \mathsf{ker}(\mathsf{q})$ the following equalities hold: 
\begin{align*}
    \mathsf{q}(\beta^{-1}_\mathcal{E}(a + x \varepsilon) ) = a && \mathsf{D}_\lambda( \beta^{-1}_\mathcal{E}(a + x \varepsilon) ) = x 
\end{align*}

Next we show that $\beta_\mathcal{E}$ and $\beta^{-1}_\mathcal{E}$ are inverses of each other. To show that $\beta_\mathcal{E} \circ \beta^{-1}_\mathcal{E} = 1_{\mathsf{ker}(\mathsf{q})[\varepsilon]}$, we use the above identities:  
\begin{align*}
\beta_\mathcal{E}(\beta^{-1}_\mathcal{E}(a +x\varepsilon) ) = \mathsf{q}(\beta^{-1}_\mathcal{E}(a + x \varepsilon)) + \mathsf{D}_\lambda( \beta^{-1}_\mathcal{E}(a + x \varepsilon) ) \varepsilon = a + x \varepsilon
\end{align*}
To show that $\beta^{-1}_\mathcal{E} \circ \beta_\mathcal{E} = 1_E$, we will first show that $\mathsf{q}\circ \beta^{-1}_\mathcal{E} \circ \beta_\mathcal{E} = \mathsf{q}$ and $\lambda \circ \beta^{-1}_\mathcal{E} \circ \beta_\mathcal{E} = \lambda$: 
\begin{align*}
    \mathsf{q}(\beta^{-1}_\mathcal{E} (\beta_\mathcal{E} (x) ) ) =   \mathsf{q}(\beta_\mathcal{E} (x) ) =  \mathsf{q}((\mathsf{q}(x) +  \mathsf{D}_\lambda(x)\varepsilon ))  = \mathsf{q}(x) 
\end{align*}
\begin{align*}
    \lambda(\beta^{-1}_\mathcal{E} (\beta_\mathcal{E} (x) ) ) =  \zeta_\mathcal{E} (\beta_\mathcal{E} (x) ) =  \zeta_\mathcal{E} (\mathsf{q}(x) +  \mathsf{D}_\lambda(x)\varepsilon ) = \mathsf{z}(\mathsf{q}(x)) +\mathsf{D}_\lambda(x)\varepsilon = \lambda(x)  
\end{align*}
Therefore, by the universal property of the pullback, it follows that $\beta^{-1}_\mathcal{E} \circ \beta_\mathcal{E} = 1_E$. So $\beta_\mathcal{E}$ and $\beta^{-1}_\mathcal{E}$ are inverse ring isomorphisms. 

Lastly, we must show that $\beta_\mathcal{E}$ and $\beta^{-1}_\mathcal{E}$ are also differential bundle morphisms over $R$. To do so, we will need to know a bit more about $\mathsf{D}_\lambda$. The third diagram of (\ref{prediffbuneq}) is $0_E \circ \mathsf{z} = \lambda \circ \mathsf{z}$, which implies that for all $a \in R$, $\mathsf{z}(a) = \mathsf{z}(a) + \mathsf{D}_\lambda(\mathsf{z}(a)) \varepsilon$. Since the left-hand side has no nilpotent component, it follows that $\mathsf{D}_\lambda(\mathsf{z}(a)) =0$ for all $a \in R$. On the other hand, the last diagram of (\ref{prediffbuneq}) says that $\rotatebox[origin=c]{180}{$\mathsf{T}$}(\lambda) \circ \lambda = \ell_E \circ \lambda$. Then using that $\mathsf{q}(\mathsf{D}_\lambda(x))=0$, $\mathsf{q}(\mathsf{z}(a)) =a$, and $\mathsf{D}_\lambda(\mathsf{z}(a)) =0$, the last diagram of (\ref{prediffbuneq}) explicitly states that $\mathsf{q}(\mathsf{z}(x)) + \mathsf{D}_\lambda(\mathsf{D}_\lambda(x)) \varepsilon\varepsilon^\prime = \mathsf{q}(\mathsf{z}(x)) + \mathsf{D}_\lambda(x) \varepsilon\varepsilon^\prime$. This implies that $\mathsf{D}_\lambda(\mathsf{D}_\lambda(x)) = \mathsf{D}_\lambda(x)$ for all $x \in E$. With these identities, we can now show that $\beta_\mathcal{E}$ is a differential bundle morphism over $R$. So we show that the diagrams of (\ref{Adiffbunmap}) hold: 
\begin{enumerate}[{\em (i)}]
\item $ \mathsf{q}_{\mathsf{ker}(\mathsf{q})} \circ \beta_\mathcal{E} = \mathsf{q}$:
\begin{align*}
    \mathsf{q}_{\mathsf{ker}(\mathsf{q})}( \beta_\mathcal{E} (x) ) = \mathsf{q}_{\mathsf{ker}(\mathsf{q})}( \mathsf{q}(x) + \mathsf{D}_\lambda(x) \varepsilon ) = \mathsf{q}(x) 
\end{align*}
\item $\rotatebox[origin=c]{180}{$\mathsf{T}$}( \beta_\mathcal{E}) \circ \lambda = \lambda_{\mathsf{ker}(\mathsf{q})} \circ \beta_\mathcal{E}$:
\[ \rotatebox[origin=c]{180}{$\mathsf{T}$}( \beta_\mathcal{E} ) (\lambda(x) ) 
=
\rotatebox[origin=c]{180}{$\mathsf{T}$}( \beta_\mathcal{E} ) (\mathsf{z}(\mathsf{q}(x))+ \mathsf{D}_\lambda(x) \varepsilon )
=
\beta_\mathcal{E}(\mathsf{z}(\mathsf{q}(x)) + \beta_\mathcal{E}(\mathsf{D}_\lambda(x) \varepsilon) \varepsilon^\prime
\]
\[
=
\mathsf{q}(\mathsf{z}(\mathsf{q}(x)) )  + \mathsf{D}_\lambda(\mathsf{z}(\mathsf{q}(x)) ) \varepsilon + \mathsf{z}(\mathsf{q}(\mathsf{D}_\lambda(x))) \varepsilon^\prime + \mathsf{D}_\lambda(\mathsf{D}_\lambda(x)) \varepsilon\varepsilon^\prime
\]
\[
=
\mathsf{q}(x) + 0 \varepsilon + 0 \varepsilon^\prime + \mathsf{D}_\lambda(x) \varepsilon\varepsilon^\prime
=
\mathsf{q}(x) + \mathsf{D}_\lambda(x) \varepsilon\varepsilon^\prime
\]
\[
=
\lambda_{\mathsf{ker}(\mathsf{q})}(\mathsf{q}(x) + \mathsf{D}_\lambda(x) \varepsilon )
=
\lambda_{\mathsf{ker}(\mathsf{q})} ( \beta_\mathcal{E} (x) ) 
\]
\end{enumerate}
So $\beta_\mathcal{E}$ is a differential bundle morphism. Since $\beta_\mathcal{E}$ is a ring isomorphism, by Lemma \ref{cor:diffbuniso} it then follows that $\beta^{-1}_\mathcal{E}$ is also a differential bundle morphism. In particular, note that this implies that $\lambda \circ \beta^{-1}_\mathcal{E} = \rotatebox[origin=c]{180}{$\mathsf{T}$}( \beta^{-1}_\mathcal{E} ) \circ \lambda_{\mathsf{ker}(\mathsf{q})}$. But by definition, we have that $\lambda \circ \beta^{-1}_\mathcal{E} = \zeta_\mathcal{E}$, and so we also have that $\rotatebox[origin=c]{180}{$\mathsf{T}$}( \beta^{-1}_\mathcal{E} ) \circ \lambda_{\mathsf{ker}(\mathsf{q})} = \zeta_\mathcal{E}$. This implies that $\beta^{-1}_\mathcal{E}(a) + \beta^{-1}_\mathcal{E}(x \varepsilon) \varepsilon = \mathsf{z}(a) + x \varepsilon$, and so $\beta^{-1}_\mathcal{E}(a) = \mathsf{z}(a)$ and $\beta^{-1}_\mathcal{E}(x \varepsilon) = x$. Therefore, $\beta^{-1}_\mathcal{E}(a + x \varepsilon) = \mathsf{z}(a) + x$. So we conclude that  $\beta_\mathcal{E}$ and $\beta^{-1}_\mathcal{E}$ are differential bundle isomorphisms over $R$. 
\end{proof}

Therefore, the construction from a module to a differential bundle is the inverse of the construction from a differential bundle to a module. So we conclude that: 

\begin{proposition}\label{prop:mod-diff-ring} For a commutative ring $R$, there is a bijective correspondence between $R$-modules and differential bundles (with negatives) over $R$ in $(\mathsf{CRING}, \rotatebox[origin=c]{180}{$\mathbb{T}$})$. 
\end{proposition}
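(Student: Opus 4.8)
The plan is to assemble the two constructions already established in the preceding lemmas and check that they are mutually inverse, since a bijective correspondence amounts to a pair of assignments whose round trips return to their starting point up to isomorphism. On one side, Lemma \ref{lem:mod-diff} sends each $R$-module $M$ to the differential bundle with negatives $\rotatebox[origin=c]{180}{$\mathsf{M}$}_R(M) = (\mathsf{q}_M, \sigma_M, \mathsf{z}_M, \lambda_M, \iota_M)$ built on the ring $M[\varepsilon]$. On the other side, Lemma \ref{lem:diff-mod} sends each differential bundle with negatives $\mathcal{E} = (\mathsf{q}\colon E \to R, \sigma, \mathsf{z}, \lambda, \iota)$ to the $R$-module $\mathsf{ker}(\mathsf{q})$ with action $a \cdot x = \mathsf{z}(a)x$. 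Thus it remains only to verify that each round trip recovers an isomorphic object.

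For the first round trip I would start with an $R$-module $M$, form $\rotatebox[origin=c]{180}{$\mathsf{M}$}_R(M)$, and take the kernel of its projection $\mathsf{q}_M$. Since $\mathsf{q}_M(a + m\varepsilon) = a$, this kernel is precisely $\{ m\varepsilon \mid m \in M \}$, and Lemma \ref{lem:alphaiso} supplies the $R$-linear isomorphism $\alpha_M\colon M \to \mathsf{ker}(\mathsf{q}_M)$, $m \mapsto m\varepsilon$, with inverse $\alpha_M^{-1}$. Hence the composite (module $\to$ bundle $\to$ module) returns a module isomorphic to $M$. For the second round trip I would start with a differential bundle with negatives $\mathcal{E}$, take the module $\mathsf{ker}(\mathsf{q})$, and form $\rotatebox[origin=c]{180}{$\mathsf{M}$}_R(\mathsf{ker}(\mathsf{q}))$; Lemma \ref{lem:betaiso} then provides the differential bundle isomorphism over $R$, namely $\beta_\mathcal{E}\colon \mathcal{E} \to \rotatebox[origin=c]{180}{$\mathsf{M}$}_R(\mathsf{ker}(\mathsf{q}))$ with inverse $\beta_\mathcal{E}^{-1}$, so this composite returns a bundle isomorphic to $\mathcal{E}$.

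Combining the two round trips shows that the assignments $M \mapsto \rotatebox[origin=c]{180}{$\mathsf{M}$}_R(M)$ and $\mathcal{E} \mapsto \mathsf{ker}(\mathsf{q})$ are mutually inverse up to isomorphism, which is exactly the claimed bijective correspondence. I would emphasize that the genuine difficulty of this result does not lie in this final assembly but is already absorbed into Lemma \ref{lem:betaiso}: the construction of $\beta_\mathcal{E}^{-1}$ via the universal property of Rosick\'y's universality diagram, together with the verification that $\beta_\mathcal{E}$ respects the projection and lift — which relies on axioms \textbf{[DB.2]}--\textbf{[DB.4]} to pin down the behaviour of the derivation $\mathsf{D}_\lambda$ (in particular $\mathsf{q}(\mathsf{D}_\lambda(x)) = 0$, $\mathsf{D}_\lambda(\mathsf{z}(a)) = 0$, and $\mathsf{D}_\lambda(\mathsf{D}_\lambda(x)) = \mathsf{D}_\lambda(x)$). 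With those lemmas in hand, the proposition follows formally.
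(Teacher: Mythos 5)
Your proposal is correct and follows exactly the paper's own route: the paper likewise deduces the proposition by combining Lemma \ref{lem:diff-mod} and Lemma \ref{lem:mod-diff} with the round-trip isomorphisms $\alpha_M$ of Lemma \ref{lem:alphaiso} and $\beta_\mathcal{E}$ of Lemma \ref{lem:betaiso}. Your closing observation that the real content is concentrated in Lemma \ref{lem:betaiso} (via Rosick\'y's universality diagram and the identities $\mathsf{q}(\mathsf{D}_\lambda(x)) = 0$, $\mathsf{D}_\lambda(\mathsf{z}(a)) = 0$, $\mathsf{D}_\lambda(\mathsf{D}_\lambda(x)) = \mathsf{D}_\lambda(x)$) also matches where the paper places the difficulty.
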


In $\mathsf{CRING}$, recall that the terminal object is the zero ring $0$. So differential objects in $(\mathsf{CRING}, \rotatebox[origin=c]{180}{$\mathbb{T}$})$ correspond precisely to $0$-modules. However, the only $0$-module is $0$. Therefore, there are no non-trivial differential objects in $(\mathsf{CRING}, \rotatebox[origin=c]{180}{$\mathbb{T}$})$.

\begin{corollary}\label{cor:diffobj-rin} The only differential object in $(\mathsf{CRING}, \rotatebox[origin=c]{180}{$\mathbb{T}$})$ is the zero ring $0$.
\end{corollary}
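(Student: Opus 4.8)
The plan is to read the result off from the module correspondence just established, specialised to the terminal object. By Definition \ref{def:diffobj}, a differential object in $(\mathsf{CRING}, \rotatebox[origin=c]{180}{$\mathbb{T}$})$ is exactly a differential bundle over the terminal object, and the terminal object of $\mathsf{CRING}$ is the zero ring $0$ (see Section \ref{sec:comringtan}). So the entire question reduces to classifying differential bundles over $0$.

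First I would apply Proposition \ref{prop:mod-diff-ring} with $R = 0$, which yields a bijective correspondence between differential bundles over $0$ and modules over the zero ring. Next I would observe that the zero ring admits only the zero module: since $1 = 0$ in the zero ring, every element $m$ of a $0$-module $M$ satisfies $m = 1 \cdot m = 0 \cdot m = 0$, so $M = 0$. Hence, up to isomorphism, there is exactly one differential bundle over $0$.

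Finally I would identify its total space explicitly via the construction $\rotatebox[origin=c]{180}{$\mathsf{M}$}_0$ of Lemma \ref{lem:mod-diff}: the zero module yields the bundle $\rotatebox[origin=c]{180}{$\mathsf{M}$}_0(0)$ whose total space is $0[\varepsilon] = \{ a + m\varepsilon \mid a, m \in 0 \}$, a one-element ring, i.e.\ the zero ring $0$. This exhibits the unique differential object as the zero ring. The only point requiring any care is the bookkeeping that identifies ``the differential object'' with the total space $E$ of the bundle over $\ast$, so that triviality of the base's module category becomes the statement $E = 0$; beyond this, the argument is a direct specialisation of Proposition \ref{prop:mod-diff-ring} and involves no real obstacle.
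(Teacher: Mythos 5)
Your proof is correct and follows the same route as the paper: reduce differential objects to differential bundles over the terminal object $0$, invoke Proposition \ref{prop:mod-diff-ring} to identify these with $0$-modules, and note the only $0$-module is trivial. Your additional bookkeeping (explicitly verifying $m = 1 \cdot m = 0$ and computing the total space $0[\varepsilon] = 0$ via Lemma \ref{lem:mod-diff}) just fills in details the paper leaves implicit.
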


We now extend Proposition \ref{prop:mod-diff-ring} to an equivalence of categories. For a commutative ring $R$, let $\mathsf{MOD}_R$ be the category of $R$-modules and $R$-linear morphisms between them. We define an equivalence of categories between $\mathsf{MOD}_R$ from $\mathsf{DBUN}_{\rotatebox[origin=c]{180}{$\mathbb{T}$}}\left[ R\right]$ as follows: 
\begin{enumerate}[{\em (i)}]
\item Define the functor $\rotatebox[origin=c]{180}{$\mathsf{M}$}_R:\mathsf{MOD}_R \to \mathsf{DBUN}_{\rotatebox[origin=c]{180}{$\mathbb{T}$}}\left[ R\right]$ which sends an $R$-module $M$ to the differential bundle $\rotatebox[origin=c]{180}{$\mathsf{M}$}_R(M)$, and sends an $R$-linear morphism $f: M \to M^\prime$ to the differential bundle morphism over $R$ $\rotatebox[origin=c]{180}{$\mathsf{M}$}_R(f): \rotatebox[origin=c]{180}{$\mathsf{M}$}_R(M) \to \rotatebox[origin=c]{180}{$\mathsf{M}$}_R(M^\prime)$ where ${\rotatebox[origin=c]{180}{$\mathsf{M}$}_R(f): M[\varepsilon] \to M^\prime[\varepsilon]}$ is defined as: 
\[ \rotatebox[origin=c]{180}{$\mathsf{M}$}_R(f)(a + m\varepsilon) = a + f(m)\varepsilon \]
\item Define the functor $\rotatebox[origin=c]{180}{$\mathsf{M}$}^\circ_R: \mathsf{DBUN}_{\rotatebox[origin=c]{180}{$\mathbb{T}$}}\left[ R\right] \to \mathsf{MOD}_R$ which sends a differential bundle with negatives over $R$, $\mathcal{E} =(\mathsf{q}: E \to R, \sigma, \mathsf{z}, \lambda, \iota)$ to the $R$-module $\rotatebox[origin=c]{180}{$\mathsf{M}$}^\circ_R(\mathcal{E}) = \mathsf{ker}(\mathsf{q})$, and sends a differential bundle morphism $f: \mathcal{E} =(\mathsf{q}: E \to R, \sigma, \mathsf{z}, \lambda, \iota) \to \mathcal{E}^\prime =({\mathsf{q}^\prime: E^\prime \to R}, \sigma^\prime, \mathsf{z}^\prime, \lambda^\prime, \iota^\prime)$ over $R$ to the $R$-linear morphism $\rotatebox[origin=c]{180}{$\mathsf{M}$}^\circ_R(f) :  \mathsf{ker}(\mathsf{q}) \to  \mathsf{ker}(\mathsf{q}^\prime)$ defined as:
\[ \rotatebox[origin=c]{180}{$\mathsf{M}$}^\circ_R(f)(x) = f(x) \]
\item Define the natural isomorphism $\alpha: \mathsf{1}_{\mathsf{MOD}_R} \Rightarrow \rotatebox[origin=c]{180}{$\mathsf{M}$}_R^\circ \circ \rotatebox[origin=c]{180}{$\mathsf{M}$}_R$ with inverse $\alpha^{-1}: \rotatebox[origin=c]{180}{$\mathsf{M}$}_R^\circ \circ \rotatebox[origin=c]{180}{$\mathsf{M}$}_R \Rightarrow \mathsf{1}_{\mathsf{MOD}_R}$ as $\alpha_M$,  $\alpha^{-1}_M$ defined in Lemma \ref{lem:alphaiso}.
\item Define the natural isomorphism $\beta: \mathsf{1}_{\mathsf{DBUN}_{\rotatebox[origin=c]{180}{$\mathbb{T}$}}\left[ R\right]} \Rightarrow \rotatebox[origin=c]{180}{$\mathsf{M}$}_R \circ \rotatebox[origin=c]{180}{$\mathsf{M}$}^\circ_R$ with inverse $\beta^{-1}: \rotatebox[origin=c]{180}{$\mathsf{M}$}_R^\circ \circ \rotatebox[origin=c]{180}{$\mathsf{M}$}_R \Rightarrow \mathsf{1}_{\mathsf{DBUN}_{\rotatebox[origin=c]{180}{$\mathbb{T}$}}\left[ R\right]}$ as $\beta_{\mathcal{E}}$, $\beta^{-1}_{\mathcal{E}}$ defined in Lemma \ref{lem:betaiso}.
\end{enumerate}

\begin{theorem} \label{thm:mod-diff-ring-1} For a commutative ring $R$, we have an equivalence of categories: 
\[\mathsf{MOD}_R \simeq \mathsf{DBUN}_{\rotatebox[origin=c]{180}{$\mathbb{T}$}}\left[ R\right]\]
\end{theorem}
\begin{proof} We must first explain why $\rotatebox[origin=c]{180}{$\mathsf{M}$}_R$ and $\rotatebox[origin=c]{180}{$\mathsf{M}$}^\circ_R$ are well-defined on morphisms. So given an $R$-linear morphism $f: M \to M^\prime$, we must show that $\rotatebox[origin=c]{180}{$\mathsf{M}$}_R(f)$ is a differential bundle morphism over $R$. We leave it to the reader to check for themselves that $\rotatebox[origin=c]{180}{$\mathsf{M}$}_R(f)$ is a ring morphism. So it remains to show that the diagrams of (\ref{Adiffbunmap}) also hold: 
\begin{enumerate}[{\em (i)}]
\item $ \mathsf{q}_{M^\prime} \circ \rotatebox[origin=c]{180}{$\mathsf{M}$}_R(f) = \mathsf{q}_M$:
\begin{align*}
    \mathsf{q}_{M^\prime}\left( \rotatebox[origin=c]{180}{$\mathsf{M}$}_R(f)( a + m \varepsilon ) \right) = \mathsf{q}_{M^\prime}( a + f(m) \varepsilon ) = a =  \mathsf{q}_M(a + m\varepsilon) 
\end{align*}
\item $\rotatebox[origin=c]{180}{$\mathsf{T}$}\left( \rotatebox[origin=c]{180}{$\mathsf{M}$}_R(f) \right) \circ \lambda_M = \lambda_{M^\prime} \circ \rotatebox[origin=c]{180}{$\mathsf{M}$}_R(f)$:
\[
\rotatebox[origin=c]{180}{$\mathsf{T}$}\left( \rotatebox[origin=c]{180}{$\mathsf{M}$}_R(f) \right) \left( \lambda_M(a +m\varepsilon) \right)
=
\rotatebox[origin=c]{180}{$\mathsf{T}$}\left( \rotatebox[origin=c]{180}{$\mathsf{M}$}_R(f) \right) \left( a +m\varepsilon\varepsilon^\prime  \right)
=
\rotatebox[origin=c]{180}{$\mathsf{M}$}_R(f)(a) + \rotatebox[origin=c]{180}{$\mathsf{M}$}_R(f)(m \varepsilon)\varepsilon^\prime
\]
\[
= 
a + f(m)\varepsilon\varepsilon^\prime
=
\lambda_{M^\prime}\left( a + f(m)\varepsilon \right)
=
\lambda_{M^\prime}\left( \rotatebox[origin=c]{180}{$\mathsf{M}$}_R(f)(a + m\varepsilon) \right)
\]
\end{enumerate}
So we conclude that $\rotatebox[origin=c]{180}{$\mathsf{M}$}_R(f)$ is a differential bundle morphism over $R$. On the other hand, given a differential bundle morphism $f: \mathcal{E}\to \mathcal{E}^\prime$ over $R$, we must first explain why if $x \in \mathsf{ker}(\mathsf{q})$ then $\rotatebox[origin=c]{180}{$\mathsf{M}$}^\circ_R(f)(x) \in \mathsf{ker}(\mathsf{q}^\prime)$. Note that since $f$ is a differential bundle morphism over $R$, by definition this means that for all $x \in E$, $\mathsf{q}^\prime(f(x)) = \mathsf{q}(x)$. So it follows that if $x \in \mathsf{ker}(\mathsf{q})$, we have that: 
\[ \mathsf{q}^\prime\left( \rotatebox[origin=c]{180}{$\mathsf{M}$}^\circ_R(f)(x) \right) = \mathsf{q}^\prime(f(x)) = \mathsf{q}(x) = 0 \]
and so $\rotatebox[origin=c]{180}{$\mathsf{M}$}^\circ_R(f)(x) \in \mathsf{ker}(\mathsf{q}^\prime)$. Thus $\rotatebox[origin=c]{180}{$\mathsf{M}$}^\circ_R(f) :  \mathsf{ker}(\mathsf{q}) \to  \mathsf{ker}(\mathsf{q}^\prime)$ is well-defined. To show that $\rotatebox[origin=c]{180}{$\mathsf{M}$}^\circ_R(f)$ is $R$-linear, clearly since $f$ preserves the addition, $\rotatebox[origin=c]{180}{$\mathsf{M}$}^\circ_R(f)$ will also, therefore it remains to show $\rotatebox[origin=c]{180}{$\mathsf{M}$}^\circ_R(f)$ preserves the action. Since $f$ is a differential bundle morphisms over $R$, by Lemma \ref{lem:diffbunmapadd}, we have that $f$ preserves the zero, that is, $f(\mathsf{z}(a)) = \mathsf{z}^\prime(a)$ for all $a \in R$. So we compute: 
\[ a \cdot \rotatebox[origin=c]{180}{$\mathsf{M}$}^\circ_R(f)(x) = a \cdot f(x) = \mathsf{z}^\prime(a) f(x) = f(\mathsf{z}(a) x) = f(a \cdot x) =  \rotatebox[origin=c]{180}{$\mathsf{M}$}^\circ_R(f)(a \cdot x) \]
So we conclude that $\rotatebox[origin=c]{180}{$\mathsf{M}$}^\circ_R(f)$ is an $R$-linear morphism. So $\rotatebox[origin=c]{180}{$\mathsf{M}$}_R$ and $\rotatebox[origin=c]{180}{$\mathsf{M}$}^\circ_R$ are well-defined, and it is straightforward to see that they also preserve composition and identities, so $\rotatebox[origin=c]{180}{$\mathsf{M}$}_R$ and $\rotatebox[origin=c]{180}{$\mathsf{M}$}^\circ_R$ are indeed functors. 

Next we explain why $\alpha$, $\alpha^{-1}$, $\beta$, and $\beta^{-1}$ are natural transformations. In fact, it suffices to explain why $\alpha$ and $\beta^{-1}$ are natural, and it will then follow that $\alpha^{-1}$ and $\beta$ are as well since we have already shown they are isomorphisms on each object. So for an $R$-linear morphism $f: M \to M^\prime$, we compute: 
\[ \rotatebox[origin=c]{180}{$\mathsf{M}$}_R^\circ\left( \rotatebox[origin=c]{180}{$\mathsf{M}$}_R (f) \right) \left( \alpha_M(m) \right) = \rotatebox[origin=c]{180}{$\mathsf{M}$}_R^\circ\left( \rotatebox[origin=c]{180}{$\mathsf{M}$}_R (f) \right) \left( m \varepsilon \right) = \rotatebox[origin=c]{180}{$\mathsf{M}$}_R (f)(m \varepsilon) = f(m) \varepsilon = \alpha_{M^\prime}(f(m)) \]
So $\alpha$ is indeed a natural transformation, and so $\alpha^{-1}$ will also be a natural transformation. Therefore, $\alpha$ and $\alpha^{-1}$ are inverse natural isomorphisms. On the other hand, for a differential bundle morphism $f: \mathcal{E}\to \mathcal{E}^\prime$ over $R$, we compute: 
\[
\beta^{-1}_{\mathcal{E}^\prime}\left( \rotatebox[origin=c]{180}{$\mathsf{M}$}_R\left( \rotatebox[origin=c]{180}{$\mathsf{M}$}^\circ_R (f) \right) \left( a + x\varepsilon \right) \right)
=
\beta^{-1}_{\mathcal{E}^\prime}\left( a + \rotatebox[origin=c]{180}{$\mathsf{M}$}^\circ_R (f)(x)\varepsilon  \right)
=
\beta^{-1}_{\mathcal{E}^\prime}\left( a + f(x)\varepsilon  \right)
\]
\[
=
\mathsf{z}^\prime(a) + f(x)\varepsilon
=
f(\mathsf{z}(a)) + f(x) = f( \mathsf{z}(a) + x)
=
f\left( \beta^{-1}_{\mathcal{E}} (a + x \varepsilon) \right) 
\]
So $\beta^{-1}$ is indeed a natural transformation, and so $\beta$ will also be a natural transformation. Therefore, $\beta$ and $\beta^{-1}$ are inverse natural isomorphisms. So we conclude that we have an equivalence of categories, and so $\mathsf{MOD}_R \simeq \mathsf{DBUN}_{\rotatebox[origin=c]{180}{$\mathbb{T}$}}\left[ R\right]$.
\end{proof}

We also obtain an equivalence of categories between the category of all differential bundles and the category of modules. Let $\mathsf{MOD}$ be the category whose objects are pairs $(R,M)$ consisting of a commutative ring $R$ and an $R$-module $M$, and where a map is a pair ${(g,f): (R,M) \to (R^\prime, M^\prime)}$ consisting of a ring morphism $g: R \to R^\prime$ and an $R$-linear map $f: M \to M^\prime$, where $M^\prime$ is an $R$-module via the action $a \bullet m = g(a) \cdot m$, so explicitly, $f(a \cdot m) = g(a) \cdot f(m)$. Composition is defined as $(g^\prime, f^\prime) \circ (g,f) = (g^\prime \circ g, f^\prime \circ f)$ and identities are $(1_R, 1_M)$. 

We define an equivalence of categories between $\mathsf{MOD}$ and $\mathsf{DBUN}\left[ (\mathsf{CRING}, \rotatebox[origin=c]{180}{$\mathbb{T}$})\right]$ as follows: 
\begin{enumerate}[{\em (i)}]
\item Define the functor $\rotatebox[origin=c]{180}{$\mathsf{M}$}:\mathsf{MOD} \to \mathsf{DBUN}\left[ (\mathsf{CRING}, \rotatebox[origin=c]{180}{$\mathbb{T}$})\right]$ which sends an object $(R,M)$ to the differential bundle $\rotatebox[origin=c]{180}{$\mathsf{M}$}(R,M) = \rotatebox[origin=c]{180}{$\mathsf{M}$}_R(M)$, and sends a map $(g,f): (R,M) \to (R^\prime, M^\prime)$ to the differential bundle morphism $\rotatebox[origin=c]{180}{$\mathsf{M}$}(g,f): \rotatebox[origin=c]{180}{$\mathsf{M}$}_R(M) \to \rotatebox[origin=c]{180}{$\mathsf{M}$}_{R^\prime}(M^\prime)$ defined as: 
\[ \rotatebox[origin=c]{180}{$\mathsf{M}$}(g,f)(a + m\varepsilon) = g(a) + f(m)\varepsilon \]
\item Define the functor $\rotatebox[origin=c]{180}{$\mathsf{M}$}^\circ: \mathsf{DBUN}\left[ (\mathsf{CRING}, \rotatebox[origin=c]{180}{$\mathbb{T}$})\right] \to \mathsf{MOD}$ which sends a differential bundle with negatives $\mathcal{E} =(\mathsf{q}: E \to R, \sigma, \mathsf{z}, \lambda, \iota)$ to the pair $\rotatebox[origin=c]{180}{$\mathsf{M}$}^\circ(\mathcal{E}) = (R, \mathsf{ker}(\mathsf{q}))$, and sends a differential bundle morphism $(f,g): \mathcal{E} =(\mathsf{q}: E \to R, \sigma, \mathsf{z}, \lambda, \iota) \to \mathcal{E}^\prime =({\mathsf{q}^\prime: E^\prime \to R^\prime}, \sigma^\prime, \mathsf{z}^\prime, \lambda^\prime, \iota^\prime)$ to the pair $\rotatebox[origin=c]{180}{$\mathsf{M}$}^\circ(f,g) = (g, \rotatebox[origin=c]{180}{$\mathsf{M}$}^\circ_{R}(f))$. 
\item Define the natural isomorphism $\overline{\alpha}: \mathsf{1}_{\mathsf{MOD}} \Rightarrow \rotatebox[origin=c]{180}{$\mathsf{M}$}^\circ \circ \rotatebox[origin=c]{180}{$\mathsf{M}$}$ as $\overline{\alpha}_{(R,M)} = (1_R, \alpha_M)$, with inverse natural isomorphism $\overline{\alpha}^{-1}: \rotatebox[origin=c]{180}{$\mathsf{M}$}^\circ \circ \rotatebox[origin=c]{180}{$\mathsf{M}$} \Rightarrow \mathsf{1}_{\mathsf{MOD}}$ defined as $\overline{\alpha}^{-1}_{(R,M)} = (1_R, \alpha^{-1}_M)$.
\item Define the natural isomorphism $\overline{\beta}: \mathsf{1}_{\mathsf{DBUN}\left[ (\mathsf{CRING}, \rotatebox[origin=c]{180}{$\mathbb{T}$})\right]} \Rightarrow \rotatebox[origin=c]{180}{$\mathsf{M}$} \circ \rotatebox[origin=c]{180}{$\mathsf{M}$}^\circ$ as $\overline{\beta}_\mathcal{E} = (1, \beta_\mathcal{E})$, with inverse natural isomorphism $\overline{\beta}^{-1}: \rotatebox[origin=c]{180}{$\mathsf{M}$}_R^\circ \circ \rotatebox[origin=c]{180}{$\mathsf{M}$}_R \Rightarrow \mathsf{1}_{\mathsf{DBUN}\left[ (\mathsf{CRING}, \rotatebox[origin=c]{180}{$\mathbb{T}$})\right]}$ as $\overline{\beta}^{-1}_\mathcal{E} = (1, \beta^{-1}_\mathcal{E})$.
\end{enumerate}

\begin{theorem} \label{thm:mod-diff-ring-2} We have an equivalence of categories: 
\[\mathsf{MOD} \simeq \mathsf{DBUN}\left[ (\mathsf{CRING}, \rotatebox[origin=c]{180}{$\mathbb{T}$})\right]\] 
\end{theorem}
\begin{proof} That $\rotatebox[origin=c]{180}{$\mathsf{M}$}$ and $\rotatebox[origin=c]{180}{$\mathsf{M}$}^\circ$ are well-defined on morphisms is similar to the proofs that $\rotatebox[origin=c]{180}{$\mathsf{M}$}_R$ and $\rotatebox[origin=c]{180}{$\mathsf{M}$}^\circ_R$ were well-defined on morphisms in the proof of Theorem \ref{thm:mod-diff-ring-1}. So $\rotatebox[origin=c]{180}{$\mathsf{M}$}$ and $\rotatebox[origin=c]{180}{$\mathsf{M}$}^\circ$ are indeed functors. Next, since $\alpha_M$ and $\alpha^{-1}_M$ are $R$-linear morphisms, it follows that $\overline{\alpha}_{(R,M)} = (1_R, \alpha_M)$ and $\overline{\alpha}^{-1}_{(R,M)} = (1_R, \alpha^{-1}_M)$ are indeed maps in $\mathsf{MOD}$, so $\overline{\alpha}$ and $\overline{\alpha}^{-1}$ are well-defined. On the other hand, since $\beta_\mathcal{E}$ and $\beta^{-1}_\mathcal{E}$ are differential bundle morphisms over the base commutative ring, it follows by definition that $\overline{\beta}_\mathcal{E} = (1, \beta_\mathcal{E})$ and $\overline{\beta}^{-1}_\mathcal{E} = (1, \beta^{-1}_\mathcal{E})$ are differential bundle morphisms, so $\overline{\beta}$ and $\overline{\beta}^{-1}$ are well-defined. Lastly, that $\overline{\alpha}$, $\overline{\alpha}^{-1}$, $\overline{\beta}$, and $\overline{\beta}^{-1}$ are natural isomorphisms follows directly from the fact that $\alpha$, $\alpha^{-1}$, $\beta$, and $\beta^{-1}$ are natural isomorphisms. So we conclude that we have an equivalence of categories: $\mathsf{MOD} \simeq \mathsf{DBUN}\left[ (\mathsf{CRING}, \rotatebox[origin=c]{180}{$\mathbb{T}$})\right]$.
\end{proof}

\begin{remark} The equivalence between modules and differential bundles is also true in more general settings. Indeed, both for the tangent category of commutative semirings and the tangent category of commutative algebras over a (semi)ring, differential bundles correspond precisely to modules via the above constructions. However, in a setting where one does not have negatives, we would have also needed to check an extra pullback square, since this is also required to make a pre-differential bundle a differential bundle in a Cartesian tangent category without negatives \cite[Proposition 6]{macadam2021vector}. Even more generally, in a codifferential category, every module of an algebra of the monad\footnote{Note that any algebra over a monad in a codifferential category acquires the structure of a commutative algebra with respect to the categorical monoidal structure, and hence we can talk about modules over it; see \cite[Theorem 3.12]{blute2015derivations}.} will induce a differential bundle in the Eilenberg-Moore category via \cite[Theorem 5.1]{blute2015derivations} and a generalization of Lemma \ref{lem:mod-diff}. If a codifferential category has kernels, then every differential bundle induces a module by generalizing Lemma \ref{lem:diff-mod}, and so in the presence of kernels, differential bundles in the Eilenberg-Moore category also correspond precisely to modules. However, since a codifferential category may not have all kernels, there may be differential bundles which are not induced by modules.
\end{remark}

\section{Differential Bundles for (Affine) Schemes}\label{diff_bundles_cringOp}

In this section, we characterize differential bundles (with negatives) in the tangent category of affine schemes and prove that they also correspond to modules (Proposition \ref{prop:mod-diff-aff}).  However, the constructions are quite different in this case.  To go from a module to a differential bundle, we take the free symmetric algebra over said module (Lemma \ref{lem:mod-diff-aff}). To go from a differential bundle to a module, we take the image of the derivation induced by the lift (Lemma \ref{lem:diff-mod-aff}). Moreover, in contrast to the previous section, in this case we obtain that the category of differential bundles is equivalent to the \emph{opposite} category of modules (Theorem \ref{thm:mod-diff-ring-aff-1} and Theorem \ref{thm:mod-diff-ring-aff-2}). To the best of our understanding, there is no general reason why the fact that differential bundles in commutative rings are equivalent to the category of modules would also imply that differential bundles in the opposite category of commutative rings are equivalent to the opposite category of modules. We conclude the section by generalizing these results to the category of schemes, where differential bundles are equivalent to the opposite category of quasicoherent sheaves of modules.  

\subsection{Tangent Category of (Affine) Schemes}\label{sec:comringoptan}

In this section, we discuss the tangent categories of affine schemes and schemes, where the tangent structure is induced by Kähler differentials. In this paper, by the category of affine schemes we mean the opposite category of commutative rings $\mathsf{CRING}^{op}$. As such, we will be working directly with $\mathsf{CRING}^{op}$, so we will write in terms of commutative rings $R$ instead of affine schemes $\mathsf{Spec}(R)$. While $\mathsf{CRING}^{op}$ has been mentioned as an example of a tangent category in other papers \cite{cockett2014differential,GARNER2018668}, a full explicit description of its tangent structure has not previously been given in the literature.  We provide such a description here as it will both be useful for the story of this paper and for future work on applications of tangent category theory in algebraic geometry. 

To give a Rosický tangent structure on $\mathsf{CRING}^{op}$, we must give a ``co-Rosický tangent structure'' on $\mathsf{CRING}$. Explicitly, this means giving a functor $\mathsf{T}: \mathsf{CRING} \to \mathsf{CRING}$ and natural transformations, and so in particular ring morphisms, of type: $\mathsf{p}_{R}: R \to \mathsf{T}(R)$, $+_{R}: \mathsf{T}(R) \to \mathsf{T}_2(R)$, $0_{R}: \mathsf{T}(R) \to R$, $\ell_{R}: \mathsf{T}^2(R) \to \mathsf{T}(R)$, $\mathsf{c}_{R}: \mathsf{T}^2(R) \to \mathsf{T}^2(R)$, and $-_R: \mathsf{T}(R) \to \mathsf{T}(R)$. 

For a commutative ring $R$, its tangent bundle $\mathsf{T}(R)$ is the free symmetric $R$-algebra over its modules of Kähler differentials $\Omega(R)$: 
\[ \mathsf{T}(R) := \mathsf{Sym}_R \left( \Omega(R) \right) = \bigoplus \limits_{n=0}^{\infty} \Omega(R)^{{\otimes^s_R}^n} = R \oplus \Omega(R) \oplus \left( \Omega(R) \otimes^s_R \Omega(R) \right) \oplus \hdots \]
where $\otimes^s_R$ is the symmetrized tensor product over $R$. In \cite[Definition 16.5.12.I]{grothendieck1966elements}, Grothen-dieck calls $\mathsf{T}(R)$ the ``fibré tangente'' (French for tangent bundle) of $R$, while in \cite[Section 2.6]{jubin2014tangent}, Jubin calls $\mathsf{T}(R)$ the tangent algebra of $R$. 

For the story of this paper, it will be useful to have a more explicit description of $\mathsf{T}(R)$. So equivalently, $\mathsf{T}(R)$ is the $R$-algebra generated by the set $\lbrace \mathsf{d}(a) \vert~ a \in R \rbrace$ modulo the equations:
\begin{align*}
  \mathsf{d}(1) = 0 && \mathsf{d}(a+b) = \mathsf{d}(a) + \mathsf{d}(b) && \mathsf{d}(ab) = a \mathsf{d}(b) + b \mathsf{d}(a)
\end{align*}
which are the same equations that are modded out to construct the module of Kähler differentials of $R$. Thus, an arbitrary element of $\mathsf{T}(R)$ is a finite sum of monomials of the form $a \mathsf{d}(b_1) \hdots \mathsf{d}(b_n)$. So the ring structure of $\mathsf{T}(R)$ is essentially the same as that of polynomials. Furthermore, $\mathsf{T}(R)$ also has a universal property similar to that of the module of Kähler differentials, but instead for algebras. For a commutative $R$-algebra $A$, a derivation evaluated in $A$ is a linear map $\mathsf{D}: R \to A$ which satisfies the product rule $\mathsf{D}(ab)= a \cdot \mathsf{D}(b) + b \cdot \mathsf{D}(a)$. Now $\mathsf{T}(R)$ is a commutative $R$-algebra $A$ via the $R$-module action given by multiplication, $a \cdot w = aw$ for $a \in R$ and $w \in \mathsf{T}(R)$. Then the map $\mathsf{d}: R \to \mathsf{T}(R)$, which maps $a$ to $\mathsf{d}(a)$, is a derivation and is universal in the sense that for any commutative $R$-algebra $A$ equipped with a derivation $\mathsf{D}: R \to A$, there exists a unique $R$-algebra morphism $\mathsf{D}^\flat: \mathsf{T}(R) \to A$ such that $\mathsf{D}^\flat(\mathsf{d}(a))= \mathsf{D}(a)$. 

\begin{example} It may be useful to work out some basic examples of tangent bundles: 
\begin{enumerate}[{\em (i)}]
\item For the ring of integers $\mathbb{Z}$, its tangent bundle is itself: $\mathsf{T}(\mathbb{Z}) = \mathbb{Z}$
\item For the polynomial ring in $n$-variables $\mathbb{Z}[x_1, \hdots, x_n]$, its tangent bundle is the polynomial ring in $2n$-variables: 
\[\mathsf{T}(\mathbb{Z}[x_1, \hdots, x_n]) = \mathbb{Z}[x_1, \hdots, x_n, \mathsf{d}(x_1), \hdots, \mathsf{d}(x_n)]\] 
with no added assumptions on the variables $\mathsf{d}(x_i)$;
\item For coordinate rings of varieties, that is, the polynomial rings quotiented by some finitely generated ideal $\mathbb{Z}[x_1,\hdots, x_n]/\langle p(\vec x), \hdots, q(\vec x) \rangle$, its tangent bundle is the polynomial ring's tangent bundle quotiented by the ideal generated by the same polynomials and their total derivatives, so that $\mathsf{T}\left(\mathbb{Z}[x_1, \hdots, x_n]/\langle p(\vec x), \hdots, q(\vec x) \rangle \right)$ is 
\[ \mathbb{Z}[x_1, \hdots, x_n, \mathsf{d}(x_1), \hdots, \mathsf{d}(x_n)]/\langle p(\vec x), \hdots, q(\vec x), \mathsf{d}(p)(\vec x), \hdots, \mathsf{d}(q)(\vec x) \rangle \]
For example, for $\mathbb{Z}[x,y]/\langle x^2 - xy^2 \rangle$, its tangent bundle is: 
\[ \mathbb{Z}[x,y, \mathsf{d}(x), \mathsf{d}(y)]/\langle x^2 - xy^2, 2x\mathsf{d}(x) - y^2\mathsf{d}(x) - 2xy\mathsf{d}(y) \rangle \]
\end{enumerate}
\end{example}

To define the necessary ring morphisms for the tangent structure, note that $\mathsf{T}(R)$ is generated by $a$ and $\mathsf{d}(a)$, for all $a \in R$. Therefore, to define ring morphisms with domain $\mathsf{T}(R)$, it suffices to define them on generators $a$ and $\mathsf{d}(a)$. Using this to our advantage, we can define a tangent structure on $\mathsf{CRING}^{op}$. 

\begin{enumerate}[{\em (i)}]
\item The endofunctor $\mathsf{T}: \mathsf{CRING} \to \mathsf{CRING}$ maps a commutative ring $R$ to its tangent bundle $\mathsf{T}(R)$ as defined above, and a ring morphism $f: R \to S$ is sent to the ring morphism $\mathsf{T}(f): \mathsf{T}(R) \to \mathsf{T}(S)$ defined as on generators as follows: 
\begin{align*}
    \mathsf{T}(f)(a) = f(a) && \mathsf{T}(f)(\mathsf{d}(a)) = \mathsf{d}(f(a))
\end{align*}
\item The projection $\mathsf{p}_{R}: R \to \mathsf{T}(R)$ is defined as the injection of $R$ into $\mathsf{T}(R)$:
\[\mathsf{p}_{R}(a) = a\]
\end{enumerate}

We also need the pullback of $n$ copies of $\mathsf{p}_{R}$ in $\mathsf{CRING}^{op}$, which means that we need the pushout of $n$ copies of $\mathsf{p}_{R}$ in $\mathsf{CRING}$. Recall that $\mathsf{CRING}$ is cocomplete, and therefore admits all pushouts. To describe the desired pushout, note that for commutative rings $R$ and $R^\prime$, any ring morphism $f: R \to R^\prime$ induces an $R$-algebra structure on $R^\prime$ via the $R$-module action $a \cdot x = f(a)x$ for all $a \in R$ and $x \in R^\prime$. Therefore, the pushout of $n$-copies of ${f: R \to R^\prime}$ is given by taking the tensor product over $R$ of $n$ copies of $R^\prime$ viewed as an $R$-module: $R^\prime_n := {R^\prime}^{\otimes^n_R}$, where $\otimes_R$ is the tensor product over $R$ of $R$-modules. The induced $R$-algebra structure on $\mathsf{T}(R)$ via $\mathsf{p}_{R}$ is precisely given by multiplication, as described above. 
\begin{enumerate}[{\em (i)}]
\setcounter{enumi}{2}
\item The pushouts of $n$ copies of $\mathsf{p}_{R}$ is given by $\mathsf{T}_n(R) := {\mathsf{T}(R)}^{\otimes^n_R}$ and where the pushout injections ${\pi_j: \mathsf{T}(R) \to \mathsf{T}_n(R)}$ injects $\mathsf{T}(R)$ into the $j$-th component:
\[\pi_j(w) = 1 \otimes_R \hdots \otimes_R 1 \otimes_R w \otimes_R 1 \otimes_R \hdots \otimes_R 1 \]
\end{enumerate}

To describe the sum, zero, and negative, let us first explain what additive bundles \cite[Section 2.1]{cockett2014differential} and Abelian group bundles \cite[Section 1]{rosicky1984abstract} are in $\mathsf{CRING}^{op}$. An additive bundle over $R$ in $\mathsf{CRING}^{op}$ corresponds precisely to a commutative $R$-bialgebra over the tensor product $\otimes_R$. The sum and zero of the additive bundle are the comultiplication and counit respectively of the $R$-coalgebra structure. The fact that they are ring morphisms and they commute with the additive bundle's projection will further imply that we obtain a commutative $R$-bialgebra. An Abelian group bundle over $R$ in $\mathsf{CRING}^{op}$ corresponds precisely to a commutative $R$-Hopf algebra over the tensor product $\otimes_R$. The negative of the Abelian group bundle gives the antipode for the $R$-Hopf algebra structure. So to give the sum, zero, and negative for our tangent structure, we must give a $R$-Hopf algebra structure on the tangent bundle $\mathsf{T}(R)$. Luckily, free symmetric $R$-algebras have a canonical commutative $R$-Hopf algebra. 

\begin{enumerate}[{\em (i)}]
\setcounter{enumi}{3}
\item The sum $+_{R}: \mathsf{T}(R) \to \mathsf{T}(R) \otimes_R \mathsf{T}(R)$ is given by the comultiplication of the canonical $R$-coalgebra structure of free symmetric $R$-algebras, that is, defined on generators as follows:
\begin{align*}
    +_R(a) = a \otimes_R 1 = 1 \otimes_R a && +_R(\mathsf{d}(a)) = \mathsf{d}(a) \otimes_R 1 + 1 \otimes_R \mathsf{d}(a)
\end{align*}
\item The zero $0_{R}: \mathsf{T}(R) \to R$ is the counit of the canonical $R$-coalgebra structure of free symmetric $R$-algebras, that is, defined on generators as follows: 
\begin{align*}
    0_R(a) = a && 0_R(\mathsf{d}(a)) = 0
\end{align*}
\item The negative $-_{R}: \mathsf{T}(R) \to \mathsf{T}(R)$ is the antipode of the canonical $R$-Hopf algebra structure of free symmetric $R$-algebras, that is, defined on generators as follows:
\begin{align*}
-_R(a) = a && -_R(\mathsf{d}(a))= - \mathsf{d}(a)
\end{align*}
\end{enumerate}

To describe the vertical lift and the canonical flip, let us first describe $\mathsf{T}^2(R)$ as the $R$-algebra generated by the set $\lbrace \mathsf{d}(a) \vert ~ a \in R \rbrace \cup \lbrace \mathsf{d}^\prime(a) \vert ~ a \in R \rbrace \cup \lbrace \mathsf{d}^\prime\mathsf{d}(a) \vert ~ a \in R \rbrace$, modulo the relations: 
\begin{gather*}
  \mathsf{d}(1) = 0 \qquad \mathsf{d}(a+b) = \mathsf{d}(a) + \mathsf{d}(b) \qquad \mathsf{d}(ab) = a \mathsf{d}(b) + b \mathsf{d}(a) \\
  \mathsf{d}^\prime(1) = 0 \qquad \mathsf{d}^\prime(a+b) = \mathsf{d}^\prime(a) + \mathsf{d}^\prime(b) \qquad \mathsf{d}^\prime(ab) = a \mathsf{d}^\prime(b) + b \mathsf{d}^\prime(a) \\
 \mathsf{d}^\prime\mathsf{d}(1)= 0 \qquad \mathsf{d}^\prime\mathsf{d}(a+b) =   \mathsf{d}^\prime\mathsf{d}(a) +  \mathsf{d}^\prime\mathsf{d}(b) \\
 \mathsf{d}^\prime\mathsf{d}(ab) = \mathsf{d}(b)\mathsf{d}^\prime(a)  + a \mathsf{d}^\prime\mathsf{d}(b) +  \mathsf{d}(a)\mathsf{d}^\prime(b)  + b \mathsf{d}^\prime\mathsf{d}(a)
 \end{gather*}
These relations say that $\mathsf{d}$ and $\mathsf{d}^\prime$ are derivations, and that $\mathsf{d}^\prime\mathsf{d}$ is the composite of derivations. Therefore, to define a ring morphism with domain $\mathsf{T}^2(R)$, it suffices to define it on the four types of generators $a$, $\mathsf{d}(a)$, $\mathsf{d}^\prime(a)$, and $ \mathsf{d}^\prime\mathsf{d}(a)$ for $a \in R$. 
\begin{enumerate}[{\em (i)}]
\setcounter{enumi}{6}
\item The vertical lift $\ell_{R}: \mathsf{T}^2(R) \to \mathsf{T}(R)$ is defined on generators as follows: 
\begin{align*}
   \ell_R(a) =a &&  \ell_R(\mathsf{d}(a)) =0 && \ell_R(\mathsf{d}^\prime(a)) =0 && \ell_R( \mathsf{d}^\prime\mathsf{d}(a)) = \mathsf{d}(a)
\end{align*}
\item The canonical flip $\mathsf{c}_{R}: \mathsf{T}^2(R) \to \mathsf{T}^2(R)$ is defined on generators as follows: 
\begin{align*}
   \mathsf{c}_R(a) =a &&  \mathsf{c}_R(\mathsf{d}(a)) =\mathsf{d}^\prime(a) && \mathsf{c}_R(\mathsf{d}^\prime(a)) =\mathsf{d}(a) && \mathsf{c}_R( \mathsf{d}^\prime\mathsf{d}(a)) = \mathsf{d}^\prime\mathsf{d}(a)
\end{align*}
\end{enumerate}
So $\mathbb{T} = (\mathsf{T}, \mathsf{p}, +, 0, \ell, \mathsf{c}, - )$ is a Rosický tangent structure on $\mathsf{CRING}^{op}$. Also, $\mathsf{CRING}$ has finite coproducts where the binary coproduct is given by the tensor product of rings $R \otimes S$ and where the initial object is the ring of integers $\mathbb{Z}$. Thus $\mathsf{CRING}^{op}$ has finite products. Since one has that $\Omega(R \otimes S) \cong R \otimes \Omega(S) \oplus S \otimes \Omega(R)$ and $\Omega(\mathbb{Z}) \cong 0$, it follows that that $\mathsf{T}(R \otimes S) \cong \mathsf{T}(R) \otimes \mathsf{T}(S)$ and $\mathsf{T}(\mathbb{Z}) \cong \mathbb{Z}$. So we have that: 

\begin{lemma}\label{lemma:cringOpisTan} $(\mathsf{CRING}^{op}, \mathbb{T})$ is a Cartesian Rosický tangent category. 
\end{lemma}

It is worth mentioning that the tangent structures for commutative rings and affine schemes are related to one another via the adjoint tangent structure theorem. Per \cite[Proposition 5.17]{cockett2014differential}, if the tangent bundle of a tangent category has a left adjoint, then this induces a tangent structure on the opposite category where the left adjoint is the tangent bundle. This is precisely what is happening between the tangent categories $(\mathsf{CRING}, \rotatebox[origin=c]{180}{$\mathbb{T}$})$ and $(\mathsf{CRING}^{op}, \mathbb{T})$. Indeed, $\mathsf{T}: \mathsf{CRING} \to \mathsf{CRING}$ is a left adjoint to ${\rotatebox[origin=c]{180}{$\mathsf{T}$}: \mathsf{CRING} \to \mathsf{CRING}}$, so we have a natural bijective correspondence between ring morphisms of type ${R \to R^\prime[\varepsilon]}$ and ${\mathsf{T}(R) \to R^\prime}$. Explicitly, given a ring morphism $f: R \to R^\prime[\varepsilon]$, which is of the form ${f(a) = f_1(a) + f_2(a) \varepsilon}$, define the ring morphism $f^\sharp: \mathsf{T}(R) \to R^\prime$ on generators as $f^\sharp(a) = f_1(a)$ and ${f^\sharp(\mathsf{d}(a)) = f_2(a)}$, and conversely, given a ring morphism $g: \mathsf{T}(R) \to R^\prime$, define the ring morphism ${g^\flat: R \to R^\prime[\varepsilon]}$ as: ${g^\flat(a) = g(a) + g(\mathsf{d}(a)) \varepsilon}$. 

Furthermore, $(\mathsf{CRING}^{op}, \mathbb{T})$ is not only a Cartesian Rosický tangent category but also a \emph{representable} tangent category \cite[Section 5.2]{cockett2014differential}. Briefly, a representable tangent category is a Cartesian category whose tangent bundle functor $\mathsf{T}$ is a representable functor $\mathsf{T} \cong (-)^D$ for some object $D$, that is, $\mathsf{T}$ is a right adjoint for the functor $\_ \times D$. The object $D$ is called the infinitesimal object \cite[Definition 5.6]{cockett2014differential}, and note that the opposite category of a representable category is a tangent category with tangent bundle functor $\_ \times D$ (where $\times$ becomes a coproduct in the opposite category). $(\mathsf{CRING}^{op}, \mathbb{T})$ is a representable tangent category where the infinitesimal object is the ring of dual numbers for the integers, $\mathbb{Z}[\varepsilon]$. So we have that $\mathbb{T}(R) \cong R^{\mathbb{Z}[\varepsilon]}$ in $\mathsf{CRING}^{op}$, and $\rotatebox[origin=c]{180}{$\mathsf{T}$}(R) \cong R \otimes \mathbb{Z}[\varepsilon]$ in $\mathsf{CRING}$.

Using Proposition \ref{prop:slice_tan_cat}, we then get that for each commutative ring $R$, the slice category $\mathsf{CRING}^{op}/R$ is also a tangent category.  But as is well-known, this slice category is equal to the (opposite of) the category of commutative $R$-algebras.  Thus we have:

\begin{corollary}\label{cor:tan_algebras}
For any commutative ring $R$, the opposite of the category of algebras over $R$, $(\mathsf{CALG}_R)^{op}$ is a Cartesian Rosický tangent category, with tangent functor given as in Proposition \ref{prop:slice_tan_cat}.  
\end{corollary}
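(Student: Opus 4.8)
The plan is to obtain the tangent structure directly from Proposition \ref{prop:slice_tan_cat}, applied to the tangent category $(\mathsf{CRING}^{op}, \mathbb{T})$ at the object $R$, and then to transport it along a standard identification of categories. First I would verify the two hypotheses of Proposition \ref{prop:slice_tan_cat}. Existence of the defining pullbacks is immediate: since $\mathsf{CRING}$ is cocomplete, $\mathsf{CRING}^{op}$ is complete, so in particular all pullbacks of $0_R$ against $\mathsf{T}(f)$ exist (concretely, they are the pushouts computing the tensor product over $\mathsf{T}(R)$ in $\mathsf{CRING}$). For preservation by each $\mathsf{T}^n$, I would invoke the fact recorded in the Remark that $(\mathsf{CRING}^{op}, \mathbb{T})$ is a representable tangent category, so that $\mathsf{T} \cong (-)^{\mathbb{Z}[\varepsilon]}$ is a right adjoint and hence preserves all limits; consequently each composite $\mathsf{T}^n$ preserves all pullbacks, in particular those appearing in the slice construction. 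This is exactly what Proposition \ref{prop:slice_tan_cat} requires, so $\mathsf{CRING}^{op}/R$ is a tangent category.

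Next I would make the identification of categories. By the standard isomorphism $\mathcal{C}^{op}/A \cong (A/\mathcal{C})^{op}$ between a slice of an opposite category and the opposite of a coslice, we have $\mathsf{CRING}^{op}/R \cong (R/\mathsf{CRING})^{op}$; and the coslice $R/\mathsf{CRING}$ is precisely the category $\mathsf{CALG}_R$ of commutative $R$-algebras and $R$-algebra morphisms. Transporting the slice tangent structure along this isomorphism equips $(\mathsf{CALG}_R)^{op}$ with a tangent structure whose tangent bundle functor is the one described in Proposition \ref{prop:slice_tan_cat}.

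It then remains to upgrade this to a Cartesian Rosický tangent category. For the negatives, I would show that the negative descends from the base: using $-_R \circ 0_R = 0_R$ together with naturality of $-$, the composite of the projection $T_R(f) \to \mathsf{T}(X)$ with $-_X$ is compatible with the pullback cone of $T_R(f)$, so by the universal property it induces a unique endomorphism of $T_R(f)$; this defines the negative on the slice, and \textbf{[T.N]} holds because the corresponding diagrams already commute in $(\mathsf{CRING}^{op}, \mathbb{T})$. For the Cartesian structure, I would use that $\mathsf{CALG}_R$ has finite coproducts — the tensor product $\otimes_R$ with unit $R$ — so that $(\mathsf{CALG}_R)^{op}$ has finite products, and then check the two comparison conditions in the slice: the terminal object $1_R$ satisfies $T_R(1_R) \cong 1_R$ (the pullback of $0_R$ along the identity is the terminal object), and the canonical comparison $T_R(X \times_R Y) \to T_R(X) \times_R T_R(Y)$ is an isomorphism.

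The main obstacle is this last point, the compatibility of $T_R$ with binary products in the slice. The subtlety is that products in a slice category are themselves pullbacks, so the verification becomes a diagram chase involving nested pullbacks: one must paste the pullback defining $T_R$ against the pullback $X \times_R Y$ that computes the slice product, and compare the result with the pullback defining $T_R(X \times_R Y)$. The argument goes through precisely because $\mathsf{T}$ preserves all limits (again by representability), so that it preserves $X \times_R Y$ and the pasting lemma for pullbacks lets one rearrange the iterated limits into the desired form; the bookkeeping of these limit diagrams is the only genuinely technical step, the remainder being formal.
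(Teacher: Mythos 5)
Your proposal is correct and takes essentially the same route as the paper: the paper also obtains the result by applying Proposition \ref{prop:slice_tan_cat} to $(\mathsf{CRING}^{op}, \mathbb{T})$ at the object $R$ and then identifying the slice $\mathsf{CRING}^{op}/R$ with $(\mathsf{CALG}_R)^{op}$. The hypothesis checks (limit-preservation of $\mathsf{T}$ via adjointness) and the descent of the negatives and Cartesian structure, which you spell out, are left implicit in the paper but are exactly the intended justifications.
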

In particular, this tangent structure on objects is given by (the symmetric algebra of) the  ``relative'' K\"{a}hler differentials: this is the same construction as seen earlier in this section, except with $d(r) = 0$ for all $r \in R$.  

\begin{remark} There are also other ways to generalize the tangent category structure of $\mathsf{CRING}^{op}$:
\begin{itemize}
    \item The opposite category of commutative semirings and the opposite category of commutative algebras over a commutative (semi)ring will be representable tangent categories via Kähler differentials in a similar fashion.
    \item The coEilenberg-Moore category of a differential category (or dually the opposite category of the Eilenberg-Moore category of a codifferential category) is a (representable) tangent category \cite[Theorem 26]{cockett_et_al:LIPIcs:2020:11660}, and these tangent categories of opposite categories of commutative (semi)rings/algebras are precisely the coEilenberg-Moore categories of the appropriate polynomial models of differential categories.
\end{itemize}
\end{remark}

The category of schemes $\mathsf{SCH}$ is also a Cartesian Rosický tangent category, whose tangent structure is built up from the tangent structure for affine schemes.  Indeed, recall that a scheme is by definition the gluing of affine schemes. So the tangent bundle of a scheme can be defined as the gluing of the tangent bundles of each affine piece of said scheme. For a different approach of describing the tangent structure of schemes, see \cite[Example 2.(iii)]{GARNER2018668}.

\begin{proposition} $(\mathsf{SCH}, \mathbb{T})$ is a Cartesian Rosický tangent category. 
\end{proposition}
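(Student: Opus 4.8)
The plan is to reduce the entire verification to the affine case $(\mathsf{CRING}^{op}, \mathbb{T})$ already established, exploiting the fact that schemes are glued from affine schemes and that every component of the tangent structure is compatible with localization. The governing principle is that the formation of Kähler differentials commutes with localization: for a commutative ring $R$ and $f \in R$ one has $\Omega(R_f) \cong R_f \otimes_R \Omega(R)$, whence $\mathsf{T}(R_f) \cong R_f \otimes_R \mathsf{T}(R)$ as $R_f$-algebras. Dually, this says that $\mathsf{T}(\mathsf{Spec}(R_f))$ is the restriction of $\mathsf{T}(\mathsf{Spec}(R))$ along the open immersion $\mathsf{Spec}(R_f) \hookrightarrow \mathsf{Spec}(R)$, and it is precisely this locality that permits the global construction.

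First I would construct the functor $\mathsf{T}: \mathsf{SCH} \to \mathsf{SCH}$. Given a scheme $X$ with an affine open cover $\{U_i = \mathsf{Spec}(R_i)\}$, one forms the affine tangent bundles $\mathsf{T}(U_i)$ with their projections $\mathsf{p}_{U_i}: \mathsf{T}(U_i) \to U_i$; on the overlaps $U_i \cap U_j$ the localization compatibility above supplies canonical gluing isomorphisms satisfying the cocycle condition, so the $\mathsf{T}(U_i)$ glue to a scheme $\mathsf{T}(X)$ with a global projection $\mathsf{p}_X$. (Invariantly, $\mathsf{T}(X) = \underline{\mathsf{Spec}}_X\left(\mathsf{Sym}_{\mathcal{O}_X}(\Omega_X)\right)$, the relative spectrum of the symmetric algebra of the cotangent sheaf.) Next I would glue the remaining data $+, 0, \ell, \mathsf{c}, -$ and the fibre powers $\mathsf{T}_n$. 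Each is a natural transformation on $\mathsf{CRING}^{op}$, so restricting it to the pieces of a cover and invoking naturality over the (affine-coverable) intersections shows the affine-local maps agree on overlaps and therefore glue to morphisms of schemes; the $\mathsf{T}_n(X)$ are fibre products over $X$, computed affine-locally as the tensor-product pushouts of $\mathsf{CRING}$, and so glue as well. Functoriality of $\mathsf{T}$ and naturality of the glued transformations follow because morphisms of schemes are determined locally.

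Finally I would check the axioms and the Cartesian condition. Commutativity of the diagrams in \textbf{[T.1]}--\textbf{[T.6]} and \textbf{[T.N]}, the universality of the pullback squares (notably the universal vertical lift of \textbf{[T.6]}), and their preservation by all $\mathsf{T}^n$ are all local properties of morphisms of schemes: equality of two morphisms, and whether a commuting square is a pullback, may be tested on an affine open cover. Since each holds on the affine pieces --- because $(\mathsf{CRING}^{op}, \mathbb{T})$ is already a Rosický tangent category --- each holds globally. Cartesianness is analogous: $\mathsf{SCH}$ has finite products (fibre products over the terminal scheme $\mathsf{Spec}(\mathbb{Z})$), and the comparison isomorphisms $\mathsf{T}(X \times Y) \cong \mathsf{T}(X) \times \mathsf{T}(Y)$ and $\mathsf{T}(\ast) \cong \ast$ glue from the affine isomorphisms $\mathsf{T}(R \otimes S) \cong \mathsf{T}(R) \otimes \mathsf{T}(S)$ and $\mathsf{T}(\mathbb{Z}) \cong \mathbb{Z}$.

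The hard part lies entirely in the gluing bookkeeping of the first two steps: one must verify that the tangent bundle and all structure maps are well-defined independently of the chosen affine cover and satisfy the cocycle conditions on triple overlaps. This rests on the naturality of the data on $\mathsf{CRING}^{op}$ together with the localization-compatibility of Kähler differentials. Once the global objects and maps are in hand, the axioms come essentially for free, as each is a local (equational or pullback) statement that descends from the affine case; full details are in \cite[Example 2.(iii)]{GARNER2018668}.
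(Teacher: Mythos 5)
Your proposal is correct and follows essentially the same route as the paper: the paper likewise defines the tangent bundle of a scheme by gluing the tangent bundles of its affine pieces and defers full verification to \cite[Example 2.(iii)]{GARNER2018668}. Your write-up simply makes explicit the locality ingredients (compatibility of K\"ahler differentials with localization, local checkability of the equational and pullback axioms) that the paper leaves implicit.
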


As with affine schemes, we can apply Proposition \ref{prop:slice_tan_cat} to tangent structure on each category of relative schemes:

\begin{corollary}\label{cor:slice_schemes}
For each scheme $A$, the slice category $\mathsf{SCH}/A$ has the structure of a Cartesian Rosický tangent category.
\end{corollary}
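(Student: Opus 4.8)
The final statement to prove is \textbf{Corollary \ref{cor:slice_schemes}}: for each scheme $A$, the slice category $\mathsf{SCH}/A$ has the structure of a Cartesian Rosický tangent category. The plan is to apply Proposition \ref{prop:slice_tan_cat} (the slicing construction) to the Cartesian Rosický tangent category $(\mathsf{SCH}, \mathbb{T})$, exactly as was done in the preceding Corollary \ref{cor:tan_algebras} for affine schemes. The whole point is that once we know $(\mathsf{SCH}, \mathbb{T})$ is a tangent category, slicing is essentially automatic --- the only content is verifying the hypotheses of the slicing proposition.

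First I would invoke Proposition \ref{prop:slice_tan_cat} with $\X = \mathsf{SCH}$ and the fixed object $A$. To apply it, the one nontrivial hypothesis to check is that the pullbacks defining the sliced tangent bundle exist and are preserved by each $\mathsf{T}^n$: for an object $f\colon X \to A$ of $\mathsf{SCH}/A$, we need the pullback of $\mathsf{T}(f)\colon \mathsf{T}(X) \to \mathsf{T}(A)$ along $0_A\colon A \to \mathsf{T}(A)$ to exist in $\mathsf{SCH}$. The key step is therefore to observe that $\mathsf{SCH}$ has the relevant pullbacks: the category of schemes is closed under fibre products, so all pullbacks exist, and in particular the pullback defining $\mathsf{T}_A(f)$ exists for every morphism $f$ into $A$. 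Second, I would note that finite products in the slice category $\mathsf{SCH}/A$ exist (they are computed as fibre products over $A$ in $\mathsf{SCH}$, which exist), with terminal object the identity $1_A\colon A \to A$.

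Next I would verify that the sliced tangent structure is \emph{Cartesian} and \emph{Rosický}. For the Rosický part, the negative $-_A$ of $(\mathsf{SCH}, \mathbb{T})$ descends to the slice by the general slicing construction, so $\mathsf{SCH}/A$ inherits a tangent structure with negatives; this requires no new verification beyond what Proposition \ref{prop:slice_tan_cat} guarantees together with the presence of negatives upstairs. For the Cartesian part, the canonical comparison map $\mathsf{T}(B \times_A C) \to \mathsf{T}(B) \times_A \mathsf{T}(C)$ in the slice is built from the comparison maps and the universal properties of the pullbacks, and one checks it is an isomorphism using that the upstairs tangent structure is Cartesian together with preservation of the defining pullbacks by $\mathsf{T}$. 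The cleanest route is to cite the general fact that slicing a Cartesian (Rosický) tangent category at any object yields again a Cartesian (Rosický) tangent category, provided the defining pullbacks exist and are preserved --- this is the slice construction of \cite{cockett2014differential} applied in the Cartesian Rosický setting.

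The step I expect to be the main obstacle is precisely the preservation of the defining pullbacks by each $\mathsf{T}^n$, since the existence of pullbacks in $\mathsf{SCH}$ is standard but preservation is the genuinely tangent-categorical requirement. Here I would reduce to the affine/local case: the tangent bundle functor on $\mathsf{SCH}$ is defined by gluing the affine tangent bundles, and on affine schemes $\mathsf{T} = \mathsf{Spec} \circ \mathsf{Sym}_R \circ \Omega$ is computed via Kähler differentials as in Section \ref{sec:comringoptan}. Preservation of the relevant pullback along $0_A$ can then be checked affine-locally on $A$ and $X$, where it follows from the corresponding statement already used to establish that $(\mathsf{CRING}^{op}, \mathbb{T})$ (hence each slice $(\mathsf{CALG}_R)^{op}$ of Corollary \ref{cor:tan_algebras}) is a tangent category, together with the compatibility of $\Omega$ and $\mathsf{Sym}$ with the localizations used in gluing. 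Once preservation is secured affine-locally, it glues to the global statement, and Proposition \ref{prop:slice_tan_cat} then delivers the tangent structure; combining this with the existence of slice products and inherited negatives completes the proof that $(\mathsf{SCH}/A, \mathbb{T})$ is a Cartesian Rosický tangent category.
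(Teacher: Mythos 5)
Your proposal takes essentially the same route as the paper: the paper proves Corollary \ref{cor:slice_schemes} simply by applying the slicing construction of Proposition \ref{prop:slice_tan_cat} to the Cartesian Rosick\'y tangent category $(\mathsf{SCH}, \mathbb{T})$, exactly as was done for affine schemes in Corollary \ref{cor:tan_algebras}. Your additional verifications (existence of fibre products in $\mathsf{SCH}$, affine-local checking of pullback preservation by $\mathsf{T}^n$, and the descent of the Cartesian and negative structure to the slice) are details the paper leaves implicit, and they are consistent with how the paper computes the sliced tangent bundle via relative K\"ahler differentials.
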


\subsection{From Differential Bundles to Modules}

Let us begin by unravelling what a differential bundle with negatives would be in $(\mathsf{CRING}^{op}, \mathbb{T})$. First recall that $(\mathsf{CRING}^{op}, \mathbb{T})$ is a Cartesian Rosický tangent category, so by Proposition \ref{prop:Ben2}, differential bundles are the same thing as differential bundles with negatives. Also, as discussed in Section \ref{sec:comringoptan}, $\mathsf{CRING}^{op}$ admits all pullbacks since $\mathsf{CRING}$ admits all pushouts. For any ring morphism ${\mathsf{q}: R \to E}$ between commutative rings, recall that $E$ becomes a commutative $R$-algebra, so, in particular, an $R$-module, with action $a \cdot x = \mathsf{q}(a)x$. Then the pushout of $n$ copies of $\mathsf{q}$ in $\mathsf{CRING}$ is given by taking the tensor product over $R$ of $n$ copies of $E$: $E_n = E^{\otimes^n_R}$. Then a differential bundle with negatives over a commutative ring $R$ in $(\mathsf{CRING}^{op}, \mathbb{T})$ viewed in $\mathsf{CRING}$ would consist of a commutative ring $E$ and five ring morphisms: $\mathsf{q}: R \to E$, ${\sigma: E \to E \otimes_R E}$, ${\mathsf{z}: E \to R}$, $\lambda: \mathsf{T}(E) \to E$, and $\iota: E \to E$. The axioms of a differential bundle imply that $E$ is a commutative $R$-Hopf algebra, where the sum $\sigma$ is the comultiplication, the zero $\mathsf{z}$ is the counit, and the negative $\iota$ is the antipode. 

To obtain an $R$-module from a differential bundle $E$, we take the image of the map ${\mathsf{D}_\lambda: E \to E}$ defined as $\mathsf{D}_\lambda(x) = \lambda(\mathsf{d}(x))$, which is a derivation whose product rule is $\mathsf{D}_\lambda(ab) = \lambda(a) \mathsf{D}_\lambda(b) + \lambda(b) \mathsf{D}_\lambda(a)$. 

\begin{lemma}\label{lem:diff-mod-aff} Let $R$ be a commutative ring, and let $\mathcal{E} = (\mathsf{q}: E \to R, \sigma, \mathsf{z}, \lambda, \iota)$ be a differential bundle (with negatives) over $R$ in $(\mathsf{CRING}^{op}, \mathbb{T})$. Then the image of the derivation:
\[\mathsf{im}(\mathsf{D}_\lambda) = \lbrace \mathsf{D}_\lambda(x) = \lambda(\mathsf{d}(x)) \vert~ \forall x \in E \rbrace\] 
is an $R$-module with action $a \cdot \mathsf{D}_\lambda(x) = \mathsf{D}_\lambda(\mathsf{q}(a)x)$. 
\end{lemma}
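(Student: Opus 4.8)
The plan is to exhibit $\mathsf{im}(\mathsf{D}_\lambda)$ as an additive subgroup of $(E,+)$ that is closed under multiplication by the subring $\mathsf{q}(R) \subseteq E$, and then to recognize the proposed action $a \cdot \mathsf{D}_\lambda(x) = \mathsf{D}_\lambda(\mathsf{q}(a)x)$ as precisely this restricted multiplication pulled back along $\mathsf{q}$. First I would record that $\mathsf{D}_\lambda$ is additive: since $\mathsf{d}(x+y) = \mathsf{d}(x)+\mathsf{d}(y)$ in $\mathsf{T}(E)$ and $\lambda$ is a ring morphism, $\mathsf{D}_\lambda(x+y) = \mathsf{D}_\lambda(x)+\mathsf{D}_\lambda(y)$, so $\mathsf{im}(\mathsf{D}_\lambda)$ is automatically an additive subgroup of $E$.

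The key is to extract two identities from the differential bundle axioms, read in $\mathsf{CRING}$ (so with all arrows and composites reversed relative to $\mathsf{CRING}^{op}$). From the first diagram of \textbf{[DB.3]}, $\mathsf{p}_E \circ \lambda = \mathsf{z}\circ\mathsf{q}$ dualizes to $\lambda \circ \mathsf{p}_E = \mathsf{q}\circ\mathsf{z}$; evaluating on the inclusion $\mathsf{p}_E(x) = x$ gives $\lambda(x) = \mathsf{q}(\mathsf{z}(x))$. From the first diagram of \textbf{[DB.2]}, $\mathsf{T}(\mathsf{q})\circ\lambda = 0_R\circ\mathsf{q}$ dualizes to $\lambda\circ\mathsf{T}(\mathsf{q}) = \mathsf{q}\circ 0_R$; evaluating on $\mathsf{d}(a)$, using $\mathsf{T}(\mathsf{q})(\mathsf{d}(a)) = \mathsf{d}(\mathsf{q}(a))$ and $0_R(\mathsf{d}(a)) = 0$, yields $\mathsf{D}_\lambda(\mathsf{q}(a)) = 0$ for all $a \in R$.

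With these in hand, I would apply the product rule $\mathsf{D}_\lambda(uv) = \lambda(u)\mathsf{D}_\lambda(v) + \lambda(v)\mathsf{D}_\lambda(u)$ (itself obtained by applying the ring morphism $\lambda$ to the Leibniz relation $\mathsf{d}(uv) = u\,\mathsf{d}(v) + v\,\mathsf{d}(u)$ in $\mathsf{T}(E)$) to $u = \mathsf{q}(a)$ and $v = x$. Since $\mathsf{z}\circ\mathsf{q} = 1_R$ (the zero is a section of the projection, \textbf{[DB.1]}), the first identity gives $\lambda(\mathsf{q}(a)) = \mathsf{q}(\mathsf{z}(\mathsf{q}(a))) = \mathsf{q}(a)$, while the second kills the term $\lambda(x)\mathsf{D}_\lambda(\mathsf{q}(a))$. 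Hence $\mathsf{D}_\lambda(\mathsf{q}(a)x) = \mathsf{q}(a)\,\mathsf{D}_\lambda(x)$, the crucial identity. This simultaneously shows that $\mathsf{im}(\mathsf{D}_\lambda)$ is closed under multiplication by $\mathsf{q}(a)$ and, since the right-hand side depends only on the element $\mathsf{D}_\lambda(x) \in E$ rather than on the chosen representative $x$, that the formula $a\cdot\mathsf{D}_\lambda(x) := \mathsf{D}_\lambda(\mathsf{q}(a)x)$ is well defined.

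Finally, I would verify the module axioms, which now reduce to the ring structure of $E$ together with the fact that $\mathsf{q}$ is a ring morphism: associativity $(ab)\cdot v = \mathsf{q}(ab)v = \mathsf{q}(a)\mathsf{q}(b)v = a\cdot(b\cdot v)$, unitality $1\cdot v = \mathsf{q}(1)v = v$, and both distributive laws, all following from additivity of $\mathsf{q}$ and of multiplication in $E$. I expect the only genuine obstacle to be the well-definedness of the action, and it is exactly the identity $\mathsf{D}_\lambda(\mathsf{q}(a)x) = \mathsf{q}(a)\mathsf{D}_\lambda(x)$ that removes it; everything else is routine once the two axiom-derived identities $\lambda(x) = \mathsf{q}(\mathsf{z}(x))$ and $\mathsf{D}_\lambda\circ\mathsf{q} = 0$ are established.
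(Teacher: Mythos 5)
Your proof is correct and takes essentially the same route as the paper's: both rest on the Leibniz rule together with the identities $\lambda(\mathsf{q}(a)) = \mathsf{q}(a)$ and $\mathsf{D}_\lambda(\mathsf{q}(a)) = 0$ to obtain the key equation $\mathsf{D}_\lambda(\mathsf{q}(a)x) = \mathsf{q}(a)\,\mathsf{D}_\lambda(x)$. The only (cosmetic) differences are that the paper extracts both identities at once from the dual of the first diagram of \textbf{[DB.2]} and then packages the conclusion as $R$-linearity of $\mathsf{D}_\lambda: E \to E$ (so that its image is a submodule by the general fact about images of linear maps), whereas you reach $\lambda(\mathsf{q}(a)) = \mathsf{q}(a)$ via \textbf{[DB.3]} and \textbf{[DB.1]} and verify well-definedness of the action directly.
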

\begin{proof} Recall that for any $R$-linear map $f: M \to N$, the image $\mathsf{im}(f) = \lbrace f(m) \vert~ \forall m\in M \rbrace$ is an $R$-module with action $a \cdot f(m) = f(a \cdot m)$. Therefore, to prove that $\mathsf{im}(\mathsf{D}_\lambda)$ is an $R$-module, it suffices to show that $\mathsf{D}_\lambda$ is an $R$-linear map. Clearly $\mathsf{D}_\lambda$ is additive, so it remains to show that $\mathsf{D}_\lambda$ also preserves the action. First note that the dual of one of the differential bundle axioms tells us that $\lambda \circ \mathsf{T}(q) = \mathsf{q} \circ 0_R$. In particular this implies that $\lambda(\mathsf{q}(a)) = \mathsf{q}(a)$ and $\lambda(\mathsf{d}(\mathsf{q}(a))) = 0$ for all $a \in R$. Note that the second equality can be rewritten as $ \mathsf{D}_\lambda( \mathsf{q}(a)  ) = 0$ for all $a \in R$. So we compute: 
\begin{align*}
\mathsf{D}_\lambda(a \cdot x) = \mathsf{D}_\lambda( \mathsf{q}(a) x ) =\lambda(\mathsf{q}(a)) \mathsf{D}_\lambda( x ) + \lambda(x) \mathsf{D}_\lambda( \mathsf{q}(a)  ) = \mathsf{q}(a) \mathsf{D}_\lambda(x) + 0 =   a \cdot \mathsf{D}_\lambda(x)
\end{align*}
So $\mathsf{D}_\lambda$ is $R$-linear and we conclude that $\mathsf{im}(\mathsf{D}_\lambda)$ is an $R$-module. 
\end{proof}

\subsection{From Modules to Differential Bundles}

We now construct a differential bundle from a module. For a commutative ring $R$ and an $R$-module $M$, let $\mathsf{Sym}_R(M)$ be the free symmetric $R$-algebra over $M$, that is: 
\[ \mathsf{Sym}_R \left( M \right) = \bigoplus \limits_{n=0}^{\infty} M^{{\otimes^s_R}^n} = R \oplus M \oplus \left( M \otimes^s_R M \right) \oplus \hdots \]
where $\otimes^s_R$ is the symmetrized tensor product over $R$. Note that as a commutative ring, $\mathsf{Sym}_R(M)$ is generated by all $a \in R$ and $m \in M$. Therefore, to define ring morphisms with domain $\mathsf{Sym}_R(M)$, it suffices to define them on generators $a$ and $m$. Using this to our advantage, we define a differential bundle with negatives over $R$ structure on $\mathsf{Sym}_R(M)$ viewed in $\mathsf{CRING}$ (so the differential bundle structure maps will all be backwards) as follows: 
\begin{enumerate}[{\em (i)}]
\item The projection $\mathsf{q}_{M}: R \to \mathsf{Sym}_R(M)$ is defined as the injection of $R$ into $\mathsf{Sym}_R(M)$:
\[ \mathsf{q}_{M}(a) = a \]
\item The pushouts (which recall are pullbacks in $\mathsf{CRING}^{op}$) are given by taking the tensor product over $R$ of $n$ copies of $\mathsf{Sym}_R(M)$, so we have that $\mathsf{Sym}_R(M)_n := {\mathsf{Sym}_R(M)}^{\otimes^n_R}$, where the $j$th injection ${\pi_j: \mathsf{Sym}_R(M) \to \mathsf{Sym}_R(M)_n}$ injects $\mathsf{Sym}_R(M)$ into the $j$-th component:
\[ \pi_j(w) = 1 \otimes_R \hdots \otimes_R 1 \otimes_R w \otimes_R 1 \otimes_R \hdots \otimes_R 1\]
\item The sum $\sigma_{M}: \mathsf{Sym}_R(M) \to \mathsf{Sym}_R(M) \otimes_R \mathsf{Sym}_R(M)$ is the canonical comultiplication of the free symmetric $R$-algebras, that is, defined on generators as follows:
\begin{align*}
  \sigma_M(a) = a \otimes_R 1 = 1 \otimes_R a && \sigma_M(m) = m \otimes_R 1 + 1 \otimes_R m  
\end{align*}
\item The zero $\mathsf{z}_{R}: \mathsf{Sym}_R(M) \to R$ is the canonical counit of the free symmetric $R$-algebras, that is, defined on generators as follows: 
\begin{align*}
\mathsf{z}_M(a) = a && \mathsf{z}_M(m) = 0
\end{align*}
\item The negative $\iota_M: \mathsf{Sym}_R(M) \to \mathsf{Sym}_R(M)$ is the canonical antipode of the free symmetric $R$-algebras, that is, defined on generators as follows:
\begin{align*}
    \iota_M(a) = a && \iota_M(m) = -m
\end{align*}
\end{enumerate}
To describe the lift, note that $\mathsf{T}(\mathsf{Sym}_R(M))$ as a commutative ring is generated by $a$, $m$, $\mathsf{d}(a)$, and $\mathsf{d}(m)$ for all $a \in R$ and $m \in M$ (and modulo the appropriate equations). 
\begin{enumerate}[{\em (i)}]
\setcounter{enumi}{6}
\item The lift $\lambda_{M}: \mathsf{T}(\mathsf{Sym}_R(M)) \to \mathsf{Sym}_R(M)$ is defined on generators as follows:
\begin{align*}
    \lambda_M(a) =a && \lambda_M(m) =0 && \lambda_M(\mathsf{d}(a)) =0 && \lambda_M(\mathsf{d}(m)) = m
\end{align*}
\end{enumerate}

\begin{lemma}\label{lem:mod-diff-aff} For every commutative ring $R$ and $R$-module $M$, 
\[\mathsf{M}_R(M) := (\mathsf{q}_M, \sigma_M, \mathsf{z}_M, \lambda_M, \iota_M)\] 
is a differential bundle with negatives over $R$ in $(\mathsf{CRING}^{op}, \mathbb{T})$. 
\end{lemma}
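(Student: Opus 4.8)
The plan is to mirror the proof of Lemma \ref{lem:mod-diff}, but dualized: rather than checking all of Definition \ref{def:differentialbundle} directly, I would invoke MacAdam's pre-differential bundle criterion (Proposition \ref{prop:prediff}). Since $(\mathsf{CRING}^{op}, \mathbb{T})$ is Cartesian Rosický, Proposition \ref{prop:Ben2} reduces the task to exhibiting $(\mathsf{q}_M, \mathsf{z}_M, \lambda_M)$ as a pre-differential bundle satisfying conditions (\ref{prediff-i}) and (\ref{prediff-ii}) of Proposition \ref{prop:prediff}, and then checking that the sum and negative it forces coincide with the proposed comultiplication $\sigma_M$ and antipode $\iota_M$. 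The key simplification is that every structure map has domain a free symmetric $R$-algebra or a tangent bundle thereof, so each ring morphism is determined by its values on the generators $a \in R$, $m \in M$ and their formal differentials, reducing most diagrams to short computations on generators.

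First I would verify the four pre-differential bundle equations of Definition \ref{def:prediffbun}, read in $\mathsf{CRING}^{op}$ and hence dualized to equations in $\mathsf{CRING}$: namely $\mathsf{z}_M \circ \mathsf{q}_M = 1_R$, $\lambda_M \circ \mathsf{p}_{\mathsf{Sym}_R(M)} = \mathsf{q}_M \circ \mathsf{z}_M$, $\mathsf{z}_M \circ 0_{\mathsf{Sym}_R(M)} = \mathsf{z}_M \circ \lambda_M$, and the lift compatibility $\lambda_M \circ \mathsf{T}(\lambda_M) = \lambda_M \circ \ell_{\mathsf{Sym}_R(M)}$. Each follows by evaluating both sides on generators; the only one needing care is the last, where one tracks the two formal differentials $\mathsf{d}, \mathsf{d}'$ on $\mathsf{T}^2(\mathsf{Sym}_R(M))$ and confirms that both composites send $\mathsf{d}'\mathsf{d}(m) \mapsto m$ and the $R$-generator $a \mapsto a$, while annihilating all remaining generators.

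Next I would discharge the two hypotheses of Proposition \ref{prop:prediff}. For (\ref{prediff-i}), the pullback of $n$ copies of $\mathsf{q}_M$ in $\mathsf{CRING}^{op}$ is the $n$-fold pushout of $\mathsf{q}_M \colon R \to \mathsf{Sym}_R(M)$ in $\mathsf{CRING}$, which is $\mathsf{Sym}_R(M) \otimes_R \cdots \otimes_R \mathsf{Sym}_R(M)$, yielding the stated $\mathsf{Sym}_R(M)_n$ and injections $\pi_j$. For (\ref{prediff-ii}), Rosický's universality diagram is a pullback in $\mathsf{CRING}^{op}$ precisely when its dual is a pushout in $\mathsf{CRING}$: that $\mathsf{Sym}_R(M)$, with $\mathsf{q}_M$ and $\lambda_M$, is the pushout of $R \xleftarrow{[0_R, \mathsf{z}_M]} \mathsf{T}(R) \otimes_{\mathbb{Z}} \mathsf{Sym}_R(M) \xrightarrow{[\mathsf{T}(\mathsf{q}_M), \mathsf{p}]} \mathsf{T}(\mathsf{Sym}_R(M))$, where the corner $\mathsf{T}(A)\times E$ of Rosický's diagram becomes the $\mathsf{CRING}$-coproduct $\mathsf{T}(R) \otimes_{\mathbb{Z}} \mathsf{Sym}_R(M)$. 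I would verify this by the universal property: given $u \colon R \to S$ and $v \colon \mathsf{T}(\mathsf{Sym}_R(M)) \to S$ whose compatibility amounts to $u \circ 0_R = v \circ \mathsf{T}(\mathsf{q}_M)$ and $u \circ \mathsf{z}_M = v \circ \mathsf{p}$, the mediating map $w \colon \mathsf{Sym}_R(M) \to S$ is the ring morphism with $w(a) = u(a)$ and $w(m) = v(\mathsf{d}(m))$; one then checks $w$ is well defined, satisfies $w \circ \mathsf{q}_M = u$ and $w \circ \lambda_M = v$, and is unique. Preservation of both pullbacks by every $\mathsf{T}^m$ is automatic and clean: dually these are pushouts in $\mathsf{CRING}$, and the Kähler tangent functor $\mathsf{T}\colon \mathsf{CRING} \to \mathsf{CRING}$ is a left adjoint (to the dual numbers functor), so each $\mathsf{T}^m$ is a left adjoint and preserves all colimits.

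Finally, Proposition \ref{prop:prediff} produces a differential bundle with negatives whose sum and negative are defined through the pullback's universal property, and it remains to identify these with $\sigma_M$ and $\iota_M$. As at the end of Lemma \ref{lem:mod-diff}, this is a brief generator check: the canonical comultiplication $m \mapsto m \otimes_R 1 + 1 \otimes_R m$ and antipode $m \mapsto -m$ satisfy exactly the equations defining the induced sum and negative. I expect the genuine obstacle to be step (\ref{prediff-ii}): unlike the pre-differential bundle axioms, it is not a diagram chase on generators but a universal-property computation, and the main pitfall is correctly recognizing the product $\mathsf{T}(A)\times E$ as the $\mathsf{CRING}$-coproduct over $\mathbb{Z}$ and reading off the pushout compatibility from its two coproduct injections.
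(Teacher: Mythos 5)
Your proposal is correct and follows essentially the same route as the paper's proof: reduce via Proposition \ref{prop:prediff} to a pre-differential bundle check on generators, realize Rosick\'y's universality diagram dually as a pushout in $\mathsf{CRING}$ with corner the coproduct $\mathsf{T}(R)\otimes\mathsf{Sym}_R(M)$, verify its universal property with the mediating map $a \mapsto u(a)$, $m \mapsto v(\mathsf{d}(m))$, use left-adjointness of $\mathsf{T}$ for preservation by $\mathsf{T}^m$, and finally identify the induced sum and negative with $\sigma_M$ and $\iota_M$ on generators. The only difference is notational (you split the compatibility condition along the two coproduct injections where the paper states it with copairings), which is equivalent.
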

\begin{proof} To show that we have a differential bundle, we will instead show that we have a pre-differential bundle which satisfies (\ref{prediff-i}) and (\ref{prediff-ii}) in Proposition \ref{prop:prediff}. So to show that $(\mathsf{q}_M, \mathsf{z}_M, \lambda_M)$ is a pre-differential bundle in $(\mathsf{CRING}^{op}, \mathbb{T})$, we must show that the dual of the four equalities from Definition \ref{def:prediffbun} hold in $\mathsf{CRING}$. To do so, we show that these hold on the generators.  
\begin{enumerate}[{\em (i)}]
\item $\mathsf{z}_M \circ \mathsf{q}_M = 1_R$ 
\begin{align*}
        \mathsf{z}_M(\mathsf{q}_M( a) ) = \mathsf{z}_M(a) = a
\end{align*}
\item $\lambda_M \circ \mathsf{p}_{\mathsf{Sym}_R(M)} =  \mathsf{q}_M \circ \mathsf{z}_M$
\begin{gather*}
\lambda_M(\mathsf{p}_{\mathsf{Sym}_R(M)}(a)) = \lambda_M(a) = a = \mathsf{q}_M(a) = \mathsf{q}_M(\mathsf{z}_M(a)) \\ \\ 
\lambda_M(\mathsf{p}_{\mathsf{Sym}_R(M)}(m)) = \lambda_M(m) = 0 = \mathsf{q}_M(0) = \mathsf{q}_M(\mathsf{z}_M(m))
\end{gather*}
\item $\mathsf{z}_M  \circ 0_{\mathsf{Sym}_R(M)} = \mathsf{z}_M \circ \lambda_M$
\begin{gather*}
\mathsf{z}_M \left( 0_{\mathsf{Sym}_R(M)} (a) \right) = \mathsf{z}_M(a) = \mathsf{z}_M(\lambda_M(a)) \\ \\
\mathsf{z}_M \left( 0_{\mathsf{Sym}_R(M)} (m) \right) = \mathsf{z}_M(m) = 0 =  \mathsf{z}_M(0) = \mathsf{z}_M(\lambda_M(0))
\end{gather*}
\item $\lambda_M \circ \mathsf{T}(\lambda_M)= \lambda_M \circ \ell_{\mathsf{Sym}_R(M)}$: Note that $\mathsf{T}^2(\mathsf{Sym}_R(M))$ has eight kinds of generators, $a$, $m$, $\mathsf{d}(a)$, $\mathsf{d}(m)$, $\mathsf{d}^\prime(a)$, $\mathsf{d}^\prime(m)$, $\mathsf{d}^\prime\mathsf{d}(a)$, and $\mathsf{d}^\prime\mathsf{d}(m)$ for all $a \in R$ and $m\in M$. 
\begin{gather*}
\lambda_M(\mathsf{T}(\lambda_M)(a)) = \lambda_M(\lambda_M(a)) = \lambda_M(a) = \lambda_M( \ell_{\mathsf{Sym}_R(M)}(a)) \\ \\
\lambda_M(\mathsf{T}(\lambda_M)(m)) = \lambda_M(\lambda_M(m)) = \lambda_M(0) = 0 = \lambda_M(m)= \lambda_M( \ell_{\mathsf{Sym}_R(M)}(m)) \\ \\ 
\lambda_M(\mathsf{T}(\lambda_M)(\mathsf{d}(a))) = \lambda_M(\lambda_M(\mathsf{d}(a))) = \lambda_M(0) = \lambda_M( \ell_{\mathsf{Sym}_R(M)}(\mathsf{d}(a))) \\ \\ 
\lambda_M(\mathsf{T}(\lambda_M)(\mathsf{d}(m))) = \lambda_M(\lambda_M(\mathsf{d}(m))) = \lambda_M(0) = \lambda_M( \ell_{\mathsf{Sym}_R(M)}(\mathsf{d}(m))) \\ \\
\lambda_M(\mathsf{T}(\lambda_M)(\mathsf{d}^\prime(a))) = \lambda_M(\mathsf{d}(\lambda_M(a))) = \lambda_M(\mathsf{d}(a)) = 0 = \lambda_M(0) = \lambda_M( \ell_{\mathsf{Sym}_R(M)}(\mathsf{d}^\prime(a))) \\  \\ 
\lambda_M(\mathsf{T}(\lambda_M)(\mathsf{d}^\prime(m))) = \lambda_M(\mathsf{d}(\lambda_M(m))) = \lambda_M(\mathsf{d}(0)) = \lambda_M(0) = \lambda_M( \ell_{\mathsf{Sym}_R(M)}(\mathsf{d}^\prime(m))) \\ \\
\lambda_M(\mathsf{T}(\lambda_M)(\mathsf{d}^\prime\mathsf{d}(a))) \!= \!\lambda_M(\mathsf{d}(\lambda_M(\mathsf{d}(a)))) = \lambda_M(\mathsf{d}(0)) \\ 
~~~~~~~~= \lambda_M(0) = 0 = \lambda_M(\mathsf{d}(a)) \!=\! \lambda_M( \ell_{\mathsf{Sym}_R(M)}(\mathsf{d}^\prime\mathsf{d}(a))) \\ \\
\lambda_M(\mathsf{T}(\lambda_M)(\mathsf{d}^\prime\mathsf{d}(m))) = \lambda_M(\mathsf{d}(\lambda_M(\mathsf{d}(m)))) = \lambda_M(\mathsf{d}(m)) = \lambda_M( \ell_{\mathsf{Sym}_R(M)}(\mathsf{d}^\prime\mathsf{d}(m))) 
\end{gather*}
    \end{enumerate}
So the desired equalities hold and we conclude that $(\mathsf{q}_M, \mathsf{z}_M, \lambda_M)$ is a pre-differential bundle in $(\mathsf{CRING}^{op}, \mathbb{T})$. 
    
Next, we must show that this pre-differential bundle also satisfies the extra assumptions required to make it a differential bundle, or rather that the dual of the assumptions hold in $\mathsf{CRING}$. As explained above, the pushout of $n$ copies of the projection $\mathsf{q}_M$ exists, chosen here to be $\mathsf{Sym}_R(M)_n$, and since $\mathsf{T}$ is a left adjoint, it preserves all colimits, so $\mathsf{T}^n$ preserves these pushouts. Dualizing this, we conclude that $(\mathsf{q}_M, \mathsf{z}_M, \lambda_M)$ satisfies assumption (\ref{prediff-i}) of Proposition \ref{prop:prediff} in $\mathsf{CRING}^{op}$. 

Next, we must show that the dual of (\ref{prediff-ii}) of Proposition \ref{prop:prediff} also holds, that is, we must show that the following square is a pushout in $\mathsf{CRING}$: 
     \begin{equation}\label{}\begin{gathered} 
  \xymatrixcolsep{5pc}\xymatrix{ \mathsf{T}(R) \otimes \mathsf{Sym}_R(M) \ar[d]_-{[\mathsf{T}(\mathsf{q}_M), \mathsf{p}_{\mathsf{Sym}_R(M)}]} \ar[r]^-{[0_R, \mathsf{z}_M]} & R \ar[d]^-{\mathsf{q}_M} \\
\mathsf{T}(\mathsf{Sym}_R(M)) \ar[r]_-{\lambda_M} &\mathsf{Sym}_R(M)  }  \end{gathered}\end{equation}
where $[-,-]$ is the copairing operation of the coproduct, which recall in $\mathsf{CRING}$ is given by the tensor product. Now suppose that $S$ is a commutative ring, and we have ring morphisms $f: \mathsf{T}(\mathsf{Sym}_R(M)) \to S$ and ${g: R \to S}$ such that $f \circ  [\mathsf{T}(\mathsf{q}_M), \mathsf{p}_{\mathsf{Sym}_R(M)}] = g \circ [0_R, \mathsf{z}_M]$. In particular, this implies that for every $a \in R$ and $m \in M$,the following equalities hold:  
\begin{align*}
f(a) = g(a) && f(\mathsf{d}(a)) = 0 && f(m) = 0
\end{align*}
Then define the map $[f,g]: \mathsf{Sym}_R(M) \to S$ as the ring morphism defined on generators as follows: 
\begin{align}
 [f,g](a) = g(a) && [f,g](m) = f(\mathsf{d}(m))    
\end{align}
Next, we compute the following on generators: 
\begin{gather*}
[f,g](\mathsf{q}_M(a)) = [f,g](a) = g(a) \\ \\
  [f,g](\lambda_M(a)) = [f,g](a) = g(a) = f(a) \\ \\
  [f,g](\lambda_M(m)) = [f,g](0) = 0 = f(m) \\ \\ 
    [f,g](\lambda_M(\mathsf{d}(a))) = [f,g](0) = 0 = f(\mathsf{d}(a)) \\ \\ 
  [f,g]  (\lambda_M(\mathsf{d}(m))) = [f,g](m) = f(\mathsf{d}(m))
\end{gather*}
Thus it follows that $[f,g] \circ \lambda_M = f$ and $[f,g] \circ \mathsf{q}_M  = g$ as desired. Lastly, it remains to show that $[f,g]$ is the unique such ring morphism. So suppose we have a ring morphism $h: \mathsf{Sym}_R(M) \to S$ such that $h \circ \lambda_M = f$ and $h \circ \mathsf{q}_M= g$. Then on generators, we compute that: 
\begin{gather*}
 h(a) = h(\mathsf{q}_M(a)) = g(a) = [f,g](a) \\ \\
 h(m) = h(\lambda_M(\mathsf{d}(m))) = f(\mathsf{d}(m)) = [f,g](m)
\end{gather*}
Since $h$ and $[f,g]$ are ring morphisms that are equal on generators, it follows that $h = [f,g]$. Thus $[f,g]$ is unique. Thus we conclude the above diagram is a pushout in $\mathsf{CRING}$. Furthermore, since $\mathsf{T}$ is a left adjoint in $\mathsf{CRING}$, we also have that $\mathsf{T}^n$ preserves these pushouts. Dualizing this, it follows that $(\mathsf{q}_M, \mathsf{z}_M, \lambda_M)$ satisfies assumption (\ref{prediff-ii}) of Proposition \ref{prop:prediff} in $\mathsf{CRING}^{op}$. Therefore by Proposition \ref{prop:prediff}, the pre-differential bundle $(\mathsf{q}_M, \mathsf{z}_M, \lambda_M)$ will induce a differential bundle with negatives in $(\mathsf{CRING}^{op}, \mathbb{T})$.

It remains to construct the sum and the negative as in Proposition \ref{prop:prediff}, and show that these are the same as the proposed $\sigma$ and $\iota$ above. By dualizing the construction, the sum $\sigma$ is: 
\[ \sigma_M = \left [ [\pi_1 \circ \lambda_M, \pi_2 \circ \lambda_M] \circ +_{\mathsf{Sym}_R(M)}, \pi_j \circ \mathsf{q}_M \right ] \]
On generators, we compute: 
\begin{align*}
 \sigma_M(a) \!=\! \left [ [\pi_1 \circ \lambda_M, \pi_2 \circ \lambda_M] \circ +_{\mathsf{Sym}_R(M)}, \pi_j \circ \mathsf{q}_M \right ] \! (a) =  \pi_j(\mathsf{q}_M(a)) = \pi_j(a) = a \otimes_R 1 = 1 \otimes_R a
  \end{align*}
 \begin{align*}
 &\sigma_M(m) \!=\! \left [ [\pi_1 \circ \lambda_M, \pi_2 \circ \lambda_M] \circ +_{\mathsf{Sym}_R(M)}, \pi_j \circ \mathsf{q}_M \right ]\!(m) = [\pi_1 \circ \lambda_M, \pi_2 \circ \lambda_M](+_{\mathsf{Sym}_R(M)}(\mathsf{d}(m))) \\
 &= [\pi_1 \circ \lambda_M, \pi_2 \circ \lambda_M](\mathsf{d}(m) \otimes_R 1) + [\pi_1 \circ \lambda_M, \pi_2 \circ \lambda_M](1 \otimes_R \mathsf{d}(m)) \\
 &= \pi_1(\lambda_M(\mathsf{d}(m))) +\pi_2(\lambda_M(\mathsf{d}(m))) = \pi_1(m) + \pi_2(m) = m \otimes_R 1 + 1 \otimes_R m 
\end{align*}
Thus on generators, $\sigma_M(a)=a \otimes_R 1 = 1 \otimes_R a$ and $\sigma(m)=m \otimes_R 1 + 1 \otimes_R m$, as defined above. On the other hand, the negative $\iota$ is: 
\begin{align*}
 \iota_M = \left [\lambda_M \circ -_{\mathsf{Sym}_R(M)} ,  \mathsf{q}_M \right ]
\end{align*}
On generators, we compute: 
\begin{gather*}
    \iota_M(a) = \left [\lambda_M \circ -_{\mathsf{Sym}_R(M)} ,  \mathsf{q}_M \right ](a) = \mathsf{q}_M(a) = a \\ \\ 
      \iota_M(m) = \left [\lambda_M \circ -_{\mathsf{Sym}_R(M)} ,  \mathsf{q}_M \right ](m) = \lambda_M(-_{\mathsf{Sym}_R(M)}(\mathsf{d}(m)) 
      = - \lambda_M(\mathsf{d}(m)) = - m 
\end{gather*}
So on generators $\iota_M(a)=a$, and $\iota_M(m) = -m$ as desired. So we conclude that $\mathsf{M}_R(M) = (\mathsf{q}_M, \sigma_M, \mathsf{z}_M, $ $ \lambda_M, \iota_M)$ is a differential bundle with negatives over $R$ in $(\mathsf{CRING}^{op}, \mathbb{T})$. 
\end{proof}

\subsection{Equivalence}

We will now show that the constructions of Lemma \ref{lem:diff-mod-aff} and Lemma \ref{lem:mod-diff-aff} are inverses of each other. Starting from the module side of things, let $R$ be a commutative ring, $M$ an $R$-module, and consider the induced derivation ${\mathsf{D}_{\lambda_M}: \mathsf{Sym}_R(M) \to \mathsf{Sym}_R(M)}$. We will show that the image of the derivation is precisely $M$. 

\begin{lemma}\label{lem:miso} For every commutative ring $R$ and $R$-module $M$, $\mathsf{im}(\mathsf{D}_{\lambda_M})=M$ as $R$-modules.    
\end{lemma}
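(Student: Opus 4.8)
The plan is to prove the two inclusions $M \subseteq \mathsf{im}(\mathsf{D}_{\lambda_M})$ and $\mathsf{im}(\mathsf{D}_{\lambda_M}) \subseteq M$ separately, and then check that the $R$-module structures agree. The first inclusion is immediate: for $m \in M$ we have $\mathsf{D}_{\lambda_M}(m) = \lambda_M(\mathsf{d}(m)) = m$ by the definition of $\lambda_M$ on the generator $\mathsf{d}(m)$, so every element of $M$ lies in the image. The real content is the reverse inclusion, for which I would exploit that $\mathsf{D}_{\lambda_M}$ is $R$-linear (established in the proof of Lemma \ref{lem:diff-mod-aff}) together with the fact that $\mathsf{Sym}_R(M)$ is spanned as an $R$-module by monomials $m_1 \cdots m_n$ with each $m_i \in M$ (the degree-$0$ part being $R$ itself). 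Thus it suffices to show that $\mathsf{D}_{\lambda_M}$ sends each such monomial into $M$.

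First I would record the product rule for $\mathsf{D}_{\lambda_M}$, namely $\mathsf{D}_{\lambda_M}(xy) = \lambda_M(x)\mathsf{D}_{\lambda_M}(y) + \lambda_M(y)\mathsf{D}_{\lambda_M}(x)$, which follows since $\mathsf{d}$ is a derivation and $\lambda_M$ is a ring morphism. The crucial observation is that $\lambda_M$ annihilates $M$: since $\lambda_M(m) = 0$ for every $m \in M$ and $\lambda_M$ is multiplicative, any product $m_1 \cdots m_k$ with $k \geq 1$ satisfies $\lambda_M(m_1 \cdots m_k) = \lambda_M(m_1) \cdots \lambda_M(m_k) = 0$, i.e. $\lambda_M$ vanishes on everything of positive degree. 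With this, an induction on $n$ gives the computation of $\mathsf{D}_{\lambda_M}$ on monomials: for $n=1$ it is the identity into $M$, while for $n \geq 2$ the twisted Leibniz rule $\mathsf{D}_{\lambda_M}(m_1 \cdot m_2 \cdots m_n) = \lambda_M(m_1)\mathsf{D}_{\lambda_M}(m_2 \cdots m_n) + \lambda_M(m_2 \cdots m_n)\mathsf{D}_{\lambda_M}(m_1)$ has both summands equal to zero (the first because $\lambda_M(m_1) = 0$, the second because $\lambda_M(m_2 \cdots m_n) = 0$). Together with $\mathsf{D}_{\lambda_M}(a) = \lambda_M(\mathsf{d}(a)) = 0$ for $a \in R$, this shows $\mathsf{D}_{\lambda_M}$ kills the degree-$0$ and degree-$\geq 2$ parts and acts as the identity on the degree-$1$ part $M$, so its image is exactly $M$.

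Finally I would verify that the $R$-module structure on $\mathsf{im}(\mathsf{D}_{\lambda_M})$ coming from Lemma \ref{lem:diff-mod-aff} coincides with the native $R$-action on $M$. That action is $a \cdot \mathsf{D}_{\lambda_M}(x) = \mathsf{D}_{\lambda_M}(\mathsf{q}_M(a)\,x)$; since $\mathsf{q}_M(a) = a$ is the image of the scalar in $\mathsf{Sym}_R(M)$, for $m \in M$ this reads $a \cdot m = \mathsf{D}_{\lambda_M}(am) = am$, because $am \in M$ and $\mathsf{D}_{\lambda_M}$ restricts to the identity on $M$. Hence the identity map on $M$ is an $R$-linear isomorphism onto $\mathsf{im}(\mathsf{D}_{\lambda_M})$, giving $\mathsf{im}(\mathsf{D}_{\lambda_M}) = M$ as $R$-modules. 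I do not anticipate a genuine obstacle here; the only point requiring care is applying the correct twisted Leibniz rule (with $\lambda_M$, not the identity, on the complementary factor) so that the higher-degree terms collapse, and setting up the induction on monomial degree cleanly after reducing via $R$-linearity.
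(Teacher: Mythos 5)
Your proof is correct and takes essentially the same approach as the paper's: compute $\mathsf{D}_{\lambda_M}$ degree by degree using the twisted Leibniz rule $\mathsf{D}_{\lambda_M}(xy) = \lambda_M(x)\mathsf{D}_{\lambda_M}(y) + \lambda_M(y)\mathsf{D}_{\lambda_M}(x)$ (degree $0$ and degree $\geq 2$ parts are killed since $\lambda_M$ vanishes on $M$ and on $\mathsf{d}(R)$, while degree $1$ is fixed pointwise), and then check that the induced action $a \cdot \mathsf{D}_{\lambda_M}(m) = \mathsf{D}_{\lambda_M}(am) = a \cdot m$ agrees with the native one. The only cosmetic differences are that you phrase the higher-degree collapse as an induction and explicitly invoke $R$-linearity to reduce to monomials, whereas the paper computes directly on pure symmetrized tensors; the substance is identical.
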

\begin{proof} Let us compute what this derivation does on pure symmetrized tensors. For degree $0$, that is, for $a \in R$ we have that: 
\begin{align*}
    \mathsf{D}_{\lambda_M}(a) = \lambda_M(\mathsf{d}(a)) = 0 
\end{align*}
so $\mathsf{D}_{\lambda_M}(a) =0$. For degree $1$, that is, for $m \in M$ we have that: 
\begin{align*}
    \mathsf{D}_{\lambda_M}(m) = \lambda_M(\mathsf{d}(m)) = m 
\end{align*}
so $\mathsf{D}_{\lambda_M}(m) =m$. For degree $2$, that is, for $m,n \in M$ using the product rule, we have that: 
\begin{align*}
    \mathsf{D}_{\lambda_M}(mn) = \lambda_M(m) \mathsf{D}_{\lambda_M}(n) + \lambda_M(n) \mathsf{D}_{\lambda_M}(m) = 0 + 0 = 0 
\end{align*}
(where we are writing $mn$ for the product of $m$ and $n$ in $\mathsf{Sym}_R(M)$).  And similarly for degree $n \geq 2$, again by using the product rule, we have that $\mathsf{D}_{\lambda_M}(m_1 m_2 \hdots m_n) =0$. So it follows that $\mathsf{im}(\mathsf{D}_{\lambda_M}) = \lbrace m \vert~ \forall m \in M \rbrace$, so $\mathsf{im}(\mathsf{D}_{\lambda_M})=M$. Furthermore, note that the multiplication of $a$ and $m$ in $\mathsf{Sym}_R(M)$ is precisely the module action, $am = a \cdot m$. Thus the induced action on $\mathsf{im}(\mathsf{D}_{\lambda_M})$ from Lemma \ref{lem:diff-mod-aff} is given by:
\[ a \cdot  \mathsf{D}_{\lambda_M}(m) = \mathsf{D}_{\lambda_M}(\mathsf{q}(a)m) = \mathsf{D}_{\lambda_M}(am) = \mathsf{D}_{\lambda_M}(a \cdot m) = a\cdot m  \]
So $\mathsf{im}(\mathsf{D}_{\lambda_M})=M$ as $R$-modules.
\end{proof}

Conversely, let us start from a differential bundle, so let $\mathcal{E} =(\mathsf{q}: E \to R, \sigma, \mathsf{z}, \lambda, \iota)$ be a differential bundle with negatives over a commutative ring $R$ in $(\mathsf{CRING}^{op}, \mathbb{T})$. To define a differential bundle isomorphism between $\mathcal{E}$ and $\mathsf{M}(\mathsf{im}(\mathsf{D}_\lambda)$, we first need to define ring isomorphisms between $E$ and $\mathsf{Sym}_R\left(\mathsf{im}(\mathsf{D}_\lambda) \right)$. Define the ring morphism ${\psi_\mathcal{E}: \mathsf{Sym}_R\left(\mathsf{im}(\mathsf{D}_\lambda) \right) \to E}$ on generators $a \in R$ and $x \in E$ as follows: 
\begin{align}
  \psi_\mathcal{E}(a) = \mathsf{q}(a) && \psi_\mathcal{E}\left( \mathsf{D}_\lambda(x) \right) =  \mathsf{D}_\lambda(x)  
\end{align}
Note that $\psi_\mathcal{E}$ can also be defined by the universal property of the free symmetric $R$-algebra, that is, it is the unique $R$-algebra morphism induced by the inclusion $\mathsf{im}(\mathsf{D}_\lambda) \to E$. To define the inverse we will need to use the dual of Rosický's universality diagram, which in this case asks that the following diagram be a pushout:
 \begin{equation}\label{}\begin{gathered} 
 \xymatrixcolsep{5pc}\xymatrix{ \mathsf{T}(R) \otimes E \ar[r]^-{[\mathsf{T}(\mathsf{q}), \mathsf{p}_E]} \ar[d]_-{[0_R, \mathsf{z}]} & \mathsf{T}(E) \ar[d]^-{\lambda}  \\ 
 R \ar[r]_-{\mathsf{q}}   & E  }  \end{gathered}\end{equation} 
So define the ring morphism $\delta_\mathcal{E}: \mathsf{T}(E) \to  \mathsf{Sym}_R\left(\mathsf{im}(\mathsf{D}_\lambda) \right)$ on generators $x \in E$ as follows: 
\begin{align}
  \delta_\mathcal{E}(x) = \mathsf{z}(x) && \delta_\mathcal{E}(\mathsf{d}(x)) = \mathsf{D}_\lambda(x) 
\end{align}
By universality of the pushout, define $\psi^{-1}_\mathcal{E}: \mathsf{ker}(\mathsf{q})[\varepsilon] \to E$ as the unique ring morphism which makes the following diagram commute: 
 \begin{equation}\label{pushout1}\begin{gathered} 
 \xymatrixcolsep{5pc}\xymatrix{ \mathsf{T}(R) \otimes E \ar[r]^-{[\mathsf{T}(\mathsf{q}), \mathsf{p}_E]} \ar[d]_-{[0_R, \mathsf{z}]} & \mathsf{T}(E) \ar[d]^-{\lambda} \ar@/^/[ddr]^-{\delta_{\mathcal{E}}} \\ 
 R \ar[r]_-{\mathsf{q}} \ar@/_/[drr]_-{\mathsf{q}_{\mathsf{im}(\mathsf{D}_\lambda)}}  & E \ar@{-->}[dr]_-{\psi^{-1}_\mathcal{E}}  \\
 & &  \mathsf{Sym}_R\left(\mathsf{im}(\mathsf{D}_\lambda) \right)  }  \end{gathered}\end{equation} 
so $\psi^{-1}_\mathcal{E} = \left[ \mathsf{q}_{\mathsf{im}(\mathsf{D}_\lambda)}, \delta_{\mathcal{E}} \right]$. 

\begin{lemma}\label{lem:psiiso} For a commutative ring $R$ and a differential bundle with negatives $\mathcal{E} = ({\mathsf{q}: E \to R}, \sigma, \mathsf{z}, \lambda,\iota)$ over $R$ in $(\mathsf{CRING}^{op}, \mathbb{T})$, $\psi_\mathcal{E}: \mathcal{E} \to \mathsf{M}(\mathsf{im}(\mathsf{D}_\lambda))$ is a differential bundle isomorphism over $R$ in $(\mathsf{CRING}^{op}, \mathbb{T})$ with inverse $\psi^{-1}_\mathcal{E}: \mathsf{M}(\mathsf{im}(\mathsf{D}_\lambda)) \to \mathcal{E}$. 
\end{lemma}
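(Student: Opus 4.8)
The plan is to mirror the proof of Lemma \ref{lem:betaiso} in dualized form: since $\mathcal{E}$ lives in $\mathsf{CRING}^{op}$, all of its structure maps are ring morphisms in $\mathsf{CRING}$ pointing the opposite way, so I will carry out every computation in $\mathsf{CRING}$. The overall skeleton is (1) verify that $\psi_\mathcal{E}$ and $\psi^{-1}_\mathcal{E}$ are well-defined ring morphisms, (2) verify they are mutually inverse, (3) verify $\psi_\mathcal{E}$ is a differential bundle morphism over $R$, and (4) invoke Lemma \ref{cor:diffbuniso} to conclude both that $\psi_\mathcal{E}$ is an isomorphism of differential bundles and that $\psi^{-1}_\mathcal{E}$ is automatically a morphism, halving the work.

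Before the main computations I would extract the handful of identities that the axioms force on the derivation $\mathsf{D}_\lambda(x) = \lambda(\mathsf{d}(x))$, each obtained by dualizing one diagram of Definition \ref{def:differentialbundle} into $\mathsf{CRING}$ and reading it off on generators. From the pre-differential bundle square I get $\lambda(x) = \mathsf{q}(\mathsf{z}(x))$ for $x \in E \subseteq \mathsf{T}(E)$, i.e. $\lambda$ restricted to the subring $E$ equals $\mathsf{q}\circ\mathsf{z}$; from \textbf{[DB.2]} I get $\mathsf{D}_\lambda(\mathsf{q}(a)) = 0$ (already used in Lemma \ref{lem:diff-mod-aff}); from \textbf{[DB.3]} I get $\mathsf{z}(\mathsf{D}_\lambda(x)) = 0$; and from \textbf{[DB.4]}, dualizing $\mathsf{T}(\lambda)\circ\lambda = \ell_E\circ\lambda$ to $\lambda\circ\mathsf{T}(\lambda) = \lambda\circ\ell_E$ and evaluating at the generator $\mathsf{d}^\prime\mathsf{d}(x)$ of $\mathsf{T}^2(E)$, I get the crucial idempotency $\mathsf{D}_\lambda(\mathsf{D}_\lambda(x)) = \mathsf{D}_\lambda(x)$. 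Together with $\mathsf{z}\circ\mathsf{q} = 1_R$, these are all the computations below require.

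For well-definedness, $\psi_\mathcal{E}$ is simply the $R$-algebra map induced through the universal property of $\mathsf{Sym}_R$ by the inclusion $\mathsf{im}(\mathsf{D}_\lambda)\hookrightarrow E$, once I check this inclusion is $R$-linear; that amounts to $\mathsf{D}_\lambda(\mathsf{q}(a)x) = \mathsf{q}(a)\mathsf{D}_\lambda(x)$, which follows from the product rule and $\mathsf{D}_\lambda(\mathsf{q}(a)) = 0$. For $\psi^{-1}_\mathcal{E}$ I must check the outer square of the pushout diagram (\ref{pushout1}) commutes, i.e. $\delta_\mathcal{E}\circ[\mathsf{T}(\mathsf{q}),\mathsf{p}_E] = \mathsf{q}_{\mathsf{im}(\mathsf{D}_\lambda)}\circ[0_R,\mathsf{z}]$; I verify this on the generator families $a,\mathsf{d}(a)$ (from $\mathsf{T}(R)$) and $x$ (from $E$) using $\mathsf{z}\circ\mathsf{q} = 1_R$ and $\mathsf{D}_\lambda(\mathsf{q}(a))=0$, and the universal property then yields $\psi^{-1}_\mathcal{E}$ together with the defining equations $\psi^{-1}_\mathcal{E}\circ\lambda = \delta_\mathcal{E}$ and $\psi^{-1}_\mathcal{E}\circ\mathsf{q} = \mathsf{q}_{\mathsf{im}(\mathsf{D}_\lambda)}$. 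The composite $\psi^{-1}_\mathcal{E}\circ\psi_\mathcal{E}$ is the identity by a direct check on the generators $a$ and $\mathsf{D}_\lambda(x)$; for $\psi_\mathcal{E}\circ\psi^{-1}_\mathcal{E} = 1_E$ I instead use the uniqueness clause of the pushout, showing $\psi_\mathcal{E}\circ\delta_\mathcal{E} = \lambda$ and $\psi_\mathcal{E}\circ\mathsf{q}_{\mathsf{im}(\mathsf{D}_\lambda)} = \mathsf{q}$, where the identity $\lambda|_E = \mathsf{q}\circ\mathsf{z}$ is exactly what makes the first equation hold on the $x$-generators.

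Finally, to see $\psi_\mathcal{E}$ is a differential bundle morphism over $R$ I dualize the two conditions of (\ref{Adiffbunmap}): the projection condition becomes $\psi_\mathcal{E}\circ\mathsf{q}_{\mathsf{im}(\mathsf{D}_\lambda)} = \mathsf{q}$ (immediate), and the lift condition becomes $\lambda\circ\mathsf{T}(\psi_\mathcal{E}) = \psi_\mathcal{E}\circ\lambda_{\mathsf{im}(\mathsf{D}_\lambda)}$, which I check on the four generator types $a,\ \mathsf{D}_\lambda(x),\ \mathsf{d}(a),\ \mathsf{d}(\mathsf{D}_\lambda(x))$ of $\mathsf{T}(\mathsf{Sym}_R(\mathsf{im}(\mathsf{D}_\lambda)))$. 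Three cases are routine via $\lambda|_E = \mathsf{q}\circ\mathsf{z}$, $\mathsf{z}\circ\mathsf{q} = 1_R$ and $\mathsf{z}\circ\mathsf{D}_\lambda = 0$; the generator $\mathsf{d}(\mathsf{D}_\lambda(x))$ is the crux, since both sides reduce to comparing $\mathsf{D}_\lambda(\mathsf{D}_\lambda(x))$ with $\mathsf{D}_\lambda(x)$, which is precisely where the idempotency from \textbf{[DB.4]} is used. Then Lemma \ref{cor:diffbuniso} promotes $\psi_\mathcal{E}$ to a differential bundle isomorphism over $R$ with $\psi^{-1}_\mathcal{E}$ as inverse. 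The main obstacle is not any single deep step but the sustained bookkeeping of which maps reverse in passing to $\mathsf{CRING}$ and of correctly enumerating the generators of $\mathsf{T}(E)$, $\mathsf{T}^2(E)$ and $\mathsf{T}(\mathsf{Sym}_R(\mathsf{im}(\mathsf{D}_\lambda)))$; the one genuinely load-bearing algebraic fact is the idempotency $\mathsf{D}_\lambda\circ\mathsf{D}_\lambda = \mathsf{D}_\lambda$ that powers the lift-compatibility check.
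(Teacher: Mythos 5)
Your proposal is correct and follows essentially the same route as the paper's proof: define $\psi_\mathcal{E}$ via the universal property of $\mathsf{Sym}_R$ applied to the ($R$-linear) inclusion $\mathsf{im}(\mathsf{D}_\lambda)\hookrightarrow E$, obtain $\psi^{-1}_\mathcal{E}$ from the dualized Rosick\'y pushout, verify the two composites (one directly on generators, one via the pushout's uniqueness clause), check the morphism-over-$R$ conditions on the four generator types, and finish with Lemma \ref{cor:diffbuniso}. The only cosmetic differences are that you isolate the idempotency $\mathsf{D}_\lambda\circ\mathsf{D}_\lambda=\mathsf{D}_\lambda$ and the identity $\mathsf{z}\circ\mathsf{D}_\lambda=0$ as standalone consequences of \textbf{[DB.4]} and \textbf{[DB.3]} before the main computation, whereas the paper invokes the dual of \textbf{[DB.4]} inline; your explicit use of $\lambda|_E=\mathsf{q}\circ\mathsf{z}$ in the check $\psi_\mathcal{E}\circ\delta_\mathcal{E}=\lambda$ is in fact the correct form of a step the paper records with a small typographical slip.
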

\begin{proof} We first explain why $\psi_\mathcal{E}$ and $\psi^{-1}_\mathcal{E}$ are well-defined ring morphisms. Clearly, $\psi_\mathcal{E}$ is well-defined by construction. On the other hand, to explain why $\psi^{-1}_\mathcal{E}$ is well-defined, we must show that the outer diagram of (\ref{pushout1}) commutes. First, note that by the dual of the first pre-differential bundle axiom in (\ref{prediffbuneq}), $\mathsf{z}(\mathsf{q}(a)) = a$ for all $a \in R$, and recall that $\mathsf{D}_\lambda(\mathsf{q}(a)) =0$ for all $a \in R$ as well. Then on generators $a \in R$ and $x \in E$ we compute:
\[
\delta_{\mathcal{E}}\left( [\mathsf{T}(\mathsf{q}), \mathsf{p}_E] (a \otimes x) \right)
\!=
\delta_{\mathcal{E}}\left( (\mathsf{T}(\mathsf{q})(a)\mathsf{p}_E(x)) \right)
\!=
\delta_{\mathcal{E}}\left( \mathsf{q}(a) x \right)
\!=
\delta_{\mathcal{E}}\left( \mathsf{q}(a) \right) \delta_{\mathcal{E}}\left( x \right)
\!=
\]
\[
\mathsf{q}_{\mathsf{im}(\mathsf{D}_\lambda)}\left(\mathsf{z}(\mathsf{q}(a)) \right) \mathsf{q}_{\mathsf{im}(\mathsf{D}_\lambda)}\left(\mathsf{z}(x) \right)
= 
\mathsf{q}_{\mathsf{im}(\mathsf{D}_\lambda)}\left( a \right) \mathsf{q}_{\mathsf{im}(\mathsf{D}_\lambda)}\left(\mathsf{z}(x) \right)
=
\mathsf{q}_{\mathsf{im}(\mathsf{D}_\lambda)}(a\mathsf{z}(x))
=
\]
\[
\mathsf{q}_{\mathsf{im}(\mathsf{D}_\lambda)}\left( 0_R(a) \mathsf{z}(x) \right)
=
\mathsf{q}_{\mathsf{im}(\mathsf{D}_\lambda)}\left( [0_R, \mathsf{z}] (a \otimes x) \right)
\]
and
\[
\delta_{\mathcal{E}}\left( [\mathsf{T}(\mathsf{q}), \mathsf{p}_E] (\mathsf{d}(a) \otimes x) \right)
=
\delta_{\mathcal{E}}\left( \mathsf{T}(\mathsf{q})(\mathsf{d}(a))\mathsf{p}_E(x) \right)
=
\delta_{\mathcal{E}}\left( \mathsf{d}(\mathsf{q}(a)) x \right)
=
\delta_{\mathcal{E}}\left( \mathsf{d}(\mathsf{q}(a))  \right) \delta_{\mathcal{E}}\left( x \right)
\]
\[
=
\mathsf{D}_\lambda(\mathsf{q}(a)) x
=
0
=
\mathsf{q}_{\mathsf{im}(\mathsf{D}_\lambda)}(0)
=
\mathsf{q}_{\mathsf{im}(\mathsf{D}_\lambda)}(0 \mathsf{z}(x))
\]
\[
=
\mathsf{q}_{\mathsf{im}(\mathsf{D}_\lambda)}\left( 0_R(\mathsf{d}(a)) \mathsf{z}(x) \right)
=
\mathsf{q}_{\mathsf{im}(\mathsf{D}_\lambda)}\left( [0_R, \mathsf{z}] (\mathsf{d}(a) \otimes x) \right)
\]
So $\delta_{\mathcal{E}} \circ [\mathsf{T}(\mathsf{q}), \mathsf{p}_E] = \mathsf{q}_{\mathsf{im}(\mathsf{D}_\lambda)} \circ [0_R, \mathsf{z}]$. Therefore, by the universal property of the pushout square, there exists a unique ring morphism $\psi^{-1}_\mathcal{E}: E \to \mathsf{Sym}_R\left(\mathsf{im}(\mathsf{D}_\lambda) \right)$ such that $\psi^{-1}_\mathcal{E} \circ \lambda = \delta_\mathcal{E}$ and $\psi^{-1}_\mathcal{E} \circ \mathsf{q} = \mathsf{q}_{\mathsf{im}(\mathsf{D}_\lambda)}$. In particular, these imply that for every $a \in R$ and $x \in E$ the following equalities hold: 
\begin{align*}
   \psi^{-1}_\mathcal{E} (\mathsf{q}(a)) = a && \psi^{-1}_\mathcal{E} (\mathsf{D}_\lambda(x)) = \mathsf{D}_\lambda(x) 
\end{align*}
Next we show that $\psi_\mathcal{E}$ and $\psi^{-1}_\mathcal{E}$ are inverses of each other. To show that $\psi^{-1}_\mathcal{E} \circ \psi_\mathcal{E} = 1_{\mathsf{Sym}_R\left(\mathsf{im}(\mathsf{D}_\lambda) \right)}$, we use the above identities and compute the following on generators $a \in R$ and $x \in E$:  
\begin{gather*}
\psi^{-1}_\mathcal{E}(\psi_\mathcal{E}(a) ) = \psi^{-1}_\mathcal{E}(\mathsf{q}(a)) = a \\ \\ 
\psi^{-1}_\mathcal{E}\left(\psi_\mathcal{E}\left(\mathsf{D}_\lambda(x) \right) \right) = \psi^{-1}_\mathcal{E}( \mathsf{D}_\lambda(x) ) = \mathsf{D}_\lambda(x) 
\end{gather*}
So $\psi^{-1}_\mathcal{E} \circ \psi_\mathcal{E} = 1_{\mathsf{Sym}_R\left(\mathsf{im}(\mathsf{D}_\lambda) \right)}$. On the other hand, to show that $\psi_\mathcal{E} \circ \psi^{-1}_\mathcal{E} = 1_E$, we will first show that $\psi_\mathcal{E} \circ \psi^{-1}_\mathcal{E} \circ \mathsf{q} = \mathsf{q}$ and $\psi_\mathcal{E} \circ \psi^{-1}_\mathcal{E} \circ \lambda = \lambda$. So on generators $a \in R$ and $x \in E$, we compute: 
\begin{gather*}
\psi_\mathcal{E}( \psi^{-1}_\mathcal{E}( \mathsf{q}(a) )) =  \psi_\mathcal{E}( a) = \mathsf{q}(a)  \\ \\ 
 \psi_\mathcal{E}( \psi^{-1}_\mathcal{E}( \lambda(x) )) =  \psi_\mathcal{E}( \delta_\mathcal{E}(x) ) =  \psi_\mathcal{E}( \mathsf{z}(x) ) = \mathsf{q}(\mathsf{z}(x)) = \lambda\left( \mathsf{p}_E(x) \right) = \lambda(x) \\ \\
 \psi_\mathcal{E}( \psi^{-1}_\mathcal{E}( \lambda(\mathsf{d}(x) ) )) =  \psi_\mathcal{E}( \delta_\mathcal{E}(\mathsf{d}(x) ) ) =  \psi_\mathcal{E}( \mathsf{D}_\lambda(x) ) = \mathsf{D}_\lambda(x) = \lambda(\mathsf{d}(x) )
\end{gather*}
Therefore, by the universal property of the pushout, it follows that $\psi_\mathcal{E} \circ \psi^{-1}_\mathcal{E} = 1_E$. So $\psi_\mathcal{E}$ and $\psi^{-1}_\mathcal{E}$ are inverse ring isomorphisms. 

Lastly, we must show that $\psi_\mathcal{E}$ and $\psi^{-1}_\mathcal{E}$ are also differential bundle morphisms over $R$ in $(\mathsf{CRING}^{op}, \mathbb{T})$, that is, we must show the dual of the axioms in Definition \ref{def:differentialbundlemorph} hold. We will first show that $\psi_\mathcal{E}$ is a differential bundle morphism. To do so, first recall that $\lambda(\mathsf{q}(a)) = \mathsf{q}(a)$ and $\mathsf{D}_\lambda(\mathsf{q}(a)) =0$, and that the dual of the last pre-differential bundle axiom in (\ref{prediffbuneq}) states that $\lambda \circ \mathsf{T}(\lambda) = \lambda \circ \ell_E$. So we show that the desired equalities hold by computing the following on generators:  
\begin{enumerate}[{\em (i)}]
\item $ \psi_\mathcal{E} \circ  \mathsf{q}_{\mathsf{im}(\mathsf{D}_\lambda)} = \mathsf{q}$:
\begin{align*}
   \psi_\mathcal{E}(\mathsf{q}_{\mathsf{im}(\mathsf{D}_\lambda)}(a)) =  \psi_\mathcal{E}(a) = \mathsf{q}(a) 
\end{align*}
\item $\lambda \circ \mathsf{T}(\beta_\mathcal{E}) = \psi_\mathcal{E} \circ \lambda_{\mathsf{im}(\mathsf{D}_\lambda)}$, on $a$:
\[ 
\lambda\left( \mathsf{T}(\beta_\mathcal{E})(a) \right)= 
\lambda\left( \beta_\mathcal{E}(a) \right)
=
\lambda( \mathsf{q}(a) )
=
\mathsf{q}(a)
=
\psi_\mathcal{E}(a)
=
\psi_\mathcal{E}( \lambda_{\mathsf{im}(\mathsf{D}_\lambda)}(a))
\]
on $\mathsf{D}_\lambda(x)$:
\[
\lambda\left( \mathsf{T}(\beta_\mathcal{E})\left(\mathsf{D}_\lambda(x) \right) \right) 
=
\lambda\left( \beta_\mathcal{E}\left( \mathsf{D}_\lambda(x) \right) \right)
=
\lambda\left( \mathsf{D}_\lambda(x) \right)
=
\lambda \left( \lambda (\mathsf{d}(x) ) \right)
=
\lambda \left( \mathsf{T}(\lambda) (\mathsf{d}(x) ) \right)
\]
\[
=
\lambda \left( \ell_E (\mathsf{d}(x) ) \right)
=
\lambda(0)
=
0
=
\psi_\mathcal{E}(0)
=
\psi_\mathcal{E}\left( \lambda_{\mathsf{im}(\mathsf{D}_\lambda)}\left( \mathsf{D}_\lambda(x) \right) \right) 
\]
on $\mathsf{d}(a)$:
\[
\lambda\left( \mathsf{T}(\beta_\mathcal{E})\left(\mathsf{d}(a) \right) \right)
= 
\lambda\left( \mathsf{d}\left(  \beta_\mathcal{E}(a)\right) \right)
=
\]
\[
\lambda\left( \mathsf{d}\left( \mathsf{q}(a) \right) \right)
=
\mathsf{D}_\lambda( \mathsf{q}(a) )
= 
0
=
\psi_\mathcal{E}(0) \\
=
\psi_\mathcal{E}\left( \lambda_{\mathsf{im}(\mathsf{D}_\lambda)}\left( \mathsf{d}(a) \right) \right) 
\]
and finally on $\mathsf{d}\left( \mathsf{D}_\lambda(x)\right)$:
\[
\lambda\left( \mathsf{T}(\beta_\mathcal{E})\left(\mathsf{d}\left( \mathsf{D}_\lambda(x)\right)  \right) \right)
=
\lambda\left( \mathsf{d}\left(  \beta_\mathcal{E}\left(\mathsf{D}_\lambda(x) \right)\right) \right)
=
\lambda\left( \mathsf{d}\left( \mathsf{D}_\lambda(x) \right) \right)
=
\]
\[
\lambda\left( \mathsf{d}\left( \lambda (\mathsf{d}(x) ) \right) \right)
=
\lambda \left( \mathsf{T}(\lambda) (\mathsf{d}^\prime\mathsf{d}(x) ) \right)
=
\lambda \left(\ell_E (\mathsf{d}^\prime\mathsf{d}(x) ) \right)
=
\lambda \left(\mathsf{d}(x) \right)
=
\]
\[
\mathsf{D}_\lambda(x)
=
\beta_\mathcal{E}\left(\mathsf{D}_\lambda(x) \right)
=
\psi_\mathcal{E}\left( \lambda_{\mathsf{im}(\mathsf{D}_\lambda)}\left( \mathsf{d}\left( \mathsf{D}_\lambda(x)\right)  \right) \right)   
\]
\end{enumerate}
So it follows that $\psi_\mathcal{E}$ is a differential bundle morphism over $R$ in $(\mathsf{CRING}^{op}, \mathbb{T})$. By Lemma \ref{cor:diffbuniso} it then follows that $\psi^{-1}_\mathcal{E}$ is also a differential bundle morphism over $R$. So we conclude that $\psi_\mathcal{E}$ and $\psi^{-1}_\mathcal{E}$ are differential bundle isomorphisms over $R$ in $(\mathsf{CRING}^{op}, \mathbb{T})$. 
\end{proof}

Thus, the construction from a module to a differential bundle is the inverse of the construction from a differential bundle to a module. So we conclude that: 

\begin{proposition}\label{prop:mod-diff-aff} For a commutative ring $R$, there is a bijective correspondence between $R$-modules and differential bundles (with negatives) over $R$ in $(\mathsf{CRING}^{op}, \mathbb{T})$. 
\end{proposition}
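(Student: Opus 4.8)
The plan is to assemble the two constructions already in place and to verify that they are mutually inverse. On one side, Lemma \ref{lem:mod-diff-aff} sends each $R$-module $M$ to the differential bundle $\mathsf{M}_R(M)$ built from the free symmetric algebra $\mathsf{Sym}_R(M)$; on the other, Lemma \ref{lem:diff-mod-aff} sends each differential bundle $\mathcal{E} = (\mathsf{q}, \sigma, \mathsf{z}, \lambda, \iota)$ over $R$ to the $R$-module $\mathsf{im}(\mathsf{D}_\lambda)$ carved out of the total space by the derivation $\mathsf{D}_\lambda(x) = \lambda(\mathsf{d}(x))$. To obtain a bijective correspondence it remains only to check that composing these two assignments, in either order, returns the original data up to the appropriate notion of sameness.

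For the round trip starting from a module, I would invoke Lemma \ref{lem:miso}: the derivation $\mathsf{D}_{\lambda_M}$ on $\mathsf{Sym}_R(M)$ vanishes on $R$ and on all monomials of degree $\geq 2$ and acts as the identity in degree $1$, so its image is exactly the degree-one part $M$, with the induced action matching the original $R$-action; hence $\mathsf{im}(\mathsf{D}_{\lambda_M}) = M$ as $R$-modules. For the round trip starting from a differential bundle, I would invoke Lemma \ref{lem:psiiso}, which produces the ring isomorphism $\psi_\mathcal{E} \colon \mathsf{Sym}_R(\mathsf{im}(\mathsf{D}_\lambda)) \to E$ together with its inverse $\psi^{-1}_\mathcal{E}$, and shows that $\psi_\mathcal{E}$ is a differential bundle isomorphism over $R$ between $\mathcal{E}$ and $\mathsf{M}_R(\mathsf{im}(\mathsf{D}_\lambda))$. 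Together these give $\mathcal{E} \cong \mathsf{M}_R(\mathsf{im}(\mathsf{D}_\lambda))$, so the two constructions are inverse to one another and the desired bijection follows exactly as in the proof of Proposition \ref{prop:mod-diff-ring}.

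Since the genuine content lives in those four lemmas, the proposition itself requires essentially no new computation; the real difficulty has already been absorbed into establishing them. In that preparatory work, the hardest step is the verification, inside Lemma \ref{lem:mod-diff-aff} and Lemma \ref{lem:psiiso}, that the dual of Rosick\'y's universality diagram is a genuine pushout in $\mathsf{CRING}$ --- that is, building the copairing $[f,g]$ (respectively $\psi^{-1}_\mathcal{E}$) from the universal property and confirming its uniqueness on generators --- together with checking that $\psi_\mathcal{E}$ respects the lift via axiom \textbf{[DB.4]}. Everything else reduces to evaluating ring morphisms on the generators $a \in R$ and $m \in M$ (or $x \in E$) of the free symmetric algebra, which is routine.
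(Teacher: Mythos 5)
Your proposal is correct and follows exactly the paper's own route: the paper deduces Proposition \ref{prop:mod-diff-aff} precisely by combining Lemma \ref{lem:mod-diff-aff} and Lemma \ref{lem:diff-mod-aff} with the two round-trip verifications, namely Lemma \ref{lem:miso} (giving $\mathsf{im}(\mathsf{D}_{\lambda_M}) = M$ as $R$-modules) and Lemma \ref{lem:psiiso} (giving the differential bundle isomorphism $\psi_\mathcal{E}$ over $R$). Your assessment of where the real work lies --- the pushout verification for the dual Rosick\'y universality diagram and the compatibility of $\psi_\mathcal{E}$ with the lift via \textbf{[DB.4]} --- also matches the structure of the paper's arguments.
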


In $\mathsf{CRING}$, recall that the initial object is $\mathbb{Z}$, which means that $\mathbb{Z}$ is the terminal object in $\mathsf{CRING}^{op}$. So differential objects in $(\mathsf{CRING}^{op}, \mathbb{T})$ correspond precisely to $\mathbb{Z}$-modules, which are precisely Abelian groups. 

\begin{corollary}\label{cor:diffobj-aff} There is a bijective correspondence between $\mathbb{Z}$-modules/Abelian groups and differential objects in $(\mathsf{CRING}^{op}, \mathbb{T})$.
\end{corollary}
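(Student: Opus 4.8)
The plan is to obtain this corollary as an immediate specialization of Proposition \ref{prop:mod-diff-aff}. By Definition \ref{def:diffobj}, a differential object is precisely a differential bundle over the terminal object, so the first step is to identify the terminal object of $(\mathsf{CRING}^{op}, \mathbb{T})$. Since the terminal object of an opposite category is the initial object of the original category, and $\mathbb{Z}$ is the initial object of $\mathsf{CRING}$ (every commutative ring admits a unique ring morphism from $\mathbb{Z}$), the terminal object of $\mathsf{CRING}^{op}$ is $\mathbb{Z}$. I would also note that $(\mathsf{CRING}^{op}, \mathbb{T})$ is a Cartesian Rosický tangent category, as established earlier, so that the notion of a differential object is well-defined in this setting.

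Second, I would apply Proposition \ref{prop:mod-diff-aff} in the case $R = \mathbb{Z}$. This immediately yields a bijective correspondence between $\mathbb{Z}$-modules and differential bundles (with negatives) over $\mathbb{Z}$ in $(\mathsf{CRING}^{op}, \mathbb{T})$. Since differential bundles over the terminal object $\mathbb{Z}$ are by definition exactly the differential objects, and since $\mathbb{Z}$-modules are precisely Abelian groups, composing these two identifications gives the claimed correspondence between Abelian groups and differential objects.

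Concretely, tracing through the constructions of Lemma \ref{lem:mod-diff-aff} and Lemma \ref{lem:diff-mod-aff}, the differential object assigned to an Abelian group $M$ has as its total space the free symmetric $\mathbb{Z}$-algebra $\mathsf{Sym}_\mathbb{Z}(M)$, while the Abelian group recovered from a differential object is the image $\mathsf{im}(\mathsf{D}_\lambda)$ of the derivation induced by the lift. There is essentially no obstacle here beyond bookkeeping: the entire content was already proved in Proposition \ref{prop:mod-diff-aff}, and the only point requiring care is the (standard) identification of the terminal object of $\mathsf{CRING}^{op}$ with the integers, together with the resulting reading of ``differential object'' as ``differential bundle over $\mathbb{Z}$''.
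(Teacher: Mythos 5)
Your proposal is correct and follows exactly the paper's own argument: identify $\mathbb{Z}$ as the terminal object of $\mathsf{CRING}^{op}$ (being the initial object of $\mathsf{CRING}$), then specialize Proposition \ref{prop:mod-diff-aff} to $R = \mathbb{Z}$ and use the identification of $\mathbb{Z}$-modules with Abelian groups. The extra remarks tracing the constructions through $\mathsf{Sym}_\mathbb{Z}(M)$ and $\mathsf{im}(\mathsf{D}_\lambda)$ are accurate but not needed.
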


We now extend Proposition \ref{prop:mod-diff-aff} to an equivalence of categories. For a commutative ring $R$, we define an equivalence of categories between $\mathsf{MOD}^{op}_R$ and $\mathsf{DBUN}_{\mathbb{T}}\left[ R\right]$ as follows: 
\begin{enumerate}[{\em (i)}]
\item Define the functor $\mathsf{M}_R:\mathsf{MOD}^{op}_R \to \mathsf{DBUN}_{\mathbb{T}}\left[ R\right]$ which sends an $R$-module $M$ to the differential bundle $\mathsf{M}_R(M)$, and sends an $R$-linear morphism $f: M \to M^\prime$ to the differential bundle morphism over $R$ $\mathsf{M}_R(f): \mathsf{M}_R(M^\prime) \to \mathsf{M}_R(M)$ defined to be the ring morphism $\mathsf{M}_R(f): \mathsf{Sym}_R(M) \to \mathsf{Sym}_R(M^\prime)$ defined on generators as follows: 
\begin{align*}
    \mathsf{M}_R(f)(a) = a && \mathsf{M}_R(f)(m) = f(m) 
\end{align*}
\item Define the functor $\mathsf{M}^\circ_R: \mathsf{DBUN}_{\mathbb{T}}\left[ R\right] \to \mathsf{MOD}^{op}_R$ which sends a differential bundle with negatives over $R$, ${\mathcal{E} =(\mathsf{q}: E \to R, \sigma, \mathsf{z}, \lambda, \iota)}$ to the $R$-module $\mathsf{M}^\circ_R(\mathcal{E}) =\mathsf{im}(\mathsf{D}_\lambda)$, and sends a differential bundle morphism $f: \mathcal{E} =(\mathsf{q}: E \to R, \sigma, \mathsf{z}, \lambda, \iota) \to \mathcal{E}^\prime =({\mathsf{q}^\prime: E^\prime \to R}, \sigma^\prime, \mathsf{z}^\prime, \lambda^\prime, \iota^\prime)$ over $R$ to the $R$-linear morphism $\mathsf{M}^\circ_R(f) :  \mathsf{im}(\mathsf{D}_{\lambda^\prime}) \to \mathsf{im}(\mathsf{D}_\lambda)$ defined as:
\[ \mathsf{M}^\circ_R(f)( \mathsf{D}_{\lambda^\prime}(x)) = \mathsf{D}_{\lambda}(f(x))  \]
\item Observe that $\mathsf{M}^\circ_R \circ \mathsf{M}_R= \mathsf{1}_{\mathsf{MOD}^{op}_R}$. 
\item Define the natural isomorphism $\psi: \mathsf{1}_{\mathsf{DBUN}_{\mathbb{T}}\left[ R\right]} \Rightarrow \mathsf{M}_R \circ \mathsf{M}^\circ_R$ with inverse $\psi^{-1}:  \mathsf{M}_R \circ \mathsf{M}^\circ_R \Rightarrow \mathsf{1}_{\mathsf{DBUN}_{\mathbb{T}}\left[ R\right]}$ as $\psi_{\mathcal{E}}$ and $\psi^{-1}_{\mathcal{E}}$ as defined in Lemma \ref{lem:psiiso}.
\end{enumerate}

\begin{theorem} \label{thm:mod-diff-ring-aff-1} For a commutative ring $R$, we have an equivalence of categories: 
\[\mathsf{MOD}^{op}_R \simeq \mathsf{DBUN}_{\mathbb{T}}\left[ R\right]\] 
\end{theorem}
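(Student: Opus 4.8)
The plan is to mirror the proof of Theorem \ref{thm:mod-diff-ring-1}, but carefully tracking the contravariance that comes from working in $\mathsf{CRING}^{op}$. Four things must be established: that $\mathsf{M}_R$ and $\mathsf{M}^\circ_R$ are well-defined on morphisms (and functorial), that the composite $\mathsf{M}^\circ_R \circ \mathsf{M}_R$ is the identity functor, and that $\psi$ (hence $\psi^{-1}$) is a natural isomorphism. The componentwise invertibility of $\psi$ is already supplied by Lemma \ref{lem:psiiso}, and the equality $\mathsf{M}^\circ_R(\mathsf{M}_R(M)) = M$ on objects by Lemma \ref{lem:miso}, so the work is concentrated in the well-definedness and naturality checks.

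First I would verify that $\mathsf{M}_R$ is well-defined on morphisms. For an $R$-linear map $f: M \to M^\prime$, the underlying map $\mathsf{M}_R(f): \mathsf{Sym}_R(M) \to \mathsf{Sym}_R(M^\prime)$ is the unique $R$-algebra morphism extending $m \mapsto f(m)$, so it is a ring morphism by the universal property of the free symmetric algebra. That it is a differential bundle morphism over $R$ then reduces, after dualizing the two squares of (\ref{Adiffbunmap}) into $\mathsf{CRING}$, to checking compatibility with the projections and lifts on the generators $a \in R$ and $m \in M$, using $\lambda_M(\mathsf{d}(m)) = m$ and $\lambda_M(\mathsf{d}(a)) = 0$; this is entirely analogous to the generator computation in the proof of Lemma \ref{lem:mod-diff-aff}.

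Next I would treat $\mathsf{M}^\circ_R$ on morphisms. Given a differential bundle morphism $f: \mathcal{E} \to \mathcal{E}^\prime$ over $R$, dualizing the lift-compatibility square of (\ref{Adiffbunmap}) into $\mathsf{CRING}$ gives $\lambda \circ \mathsf{T}(f) = f \circ \lambda^\prime$; evaluating both sides on $\mathsf{d}(x)$ yields the key identity $\mathsf{D}_\lambda(f(x)) = f(\mathsf{D}_{\lambda^\prime}(x))$. This simultaneously shows that $\mathsf{M}^\circ_R(f)$ is well-defined on $\mathsf{im}(\mathsf{D}_{\lambda^\prime})$ --- it is simply the restriction of $f$ --- and that it lands in $\mathsf{im}(\mathsf{D}_\lambda)$. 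Its $R$-linearity follows from $f$ being additive and preserving the projection (the left triangle of (\ref{Adiffbunmap})), since the module actions on both images are defined through the respective projections. Functoriality of $\mathsf{M}_R$ and $\mathsf{M}^\circ_R$ is immediate, and a short generator computation using Lemma \ref{lem:miso} confirms $\mathsf{M}^\circ_R(\mathsf{M}_R(f)) = f$, completing $\mathsf{M}^\circ_R \circ \mathsf{M}_R = \mathsf{1}_{\mathsf{MOD}^{op}_R}$.

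The final and most delicate step is the naturality of $\psi$: for every differential bundle morphism $f: \mathcal{E} \to \mathcal{E}^\prime$ over $R$ I must check $\mathsf{M}_R(\mathsf{M}^\circ_R(f)) \circ \psi_\mathcal{E} = \psi_{\mathcal{E}^\prime} \circ f$ in $\mathsf{DBUN}_{\mathbb{T}}[R]$. Passing to underlying ring morphisms in $\mathsf{CRING}$ and using the explicit generator formulas $\psi_\mathcal{E}(a) = \mathsf{q}(a)$ and $\psi_\mathcal{E}(\mathsf{D}_\lambda(x)) = \mathsf{D}_\lambda(x)$ from Lemma \ref{lem:psiiso}, together with the identity $\mathsf{D}_\lambda(f(x)) = f(\mathsf{D}_{\lambda^\prime}(x))$ above, this becomes a direct comparison on generators; naturality of $\psi^{-1}$ then follows formally since each $\psi_\mathcal{E}$ is an isomorphism. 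I expect the main obstacle to be purely the bookkeeping of directions: because every structure map and every morphism reverses when read in $\mathsf{CRING}$, one must correctly identify the source, target, and underlying ring map of each component before the generator checks can be carried out, after which the computations are routine.
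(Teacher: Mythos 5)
Your proposal is correct and follows essentially the same route as the paper's proof: establishing well-definedness of $\mathsf{M}_R$ and $\mathsf{M}^\circ_R$ on morphisms, observing that $\mathsf{M}^\circ_R \circ \mathsf{M}_R$ is strictly the identity, and reducing the equivalence to naturality of $\psi$, checked on generators via the identity $f(\mathsf{D}_{\lambda^\prime}(x)) = \mathsf{D}_\lambda(f(x))$ extracted from the lift-compatibility square. The paper verifies $R$-linearity of $\mathsf{M}^\circ_R(f)$ by a direct computation with the action rather than by your ``restriction of $f$'' observation, but this is the same argument in substance.
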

\begin{proof} We must first explain why $\mathsf{M}_R$ and $\mathsf{M}^\circ_R$ are well-defined. Clearly, $\mathsf{M}_R$ is well-defined on objects and maps and preserves composition and identities. So $\mathsf{M}_R$ is indeed a functor. On the other hand, let ${f: \mathcal{E} \to \mathcal{E}^\prime}$ be a differential bundle morphism over $R$ in $(\mathsf{CRING}^{op}, \mathbb{T})$. This implies that $f: E^\prime \to E$ is a ring morphism and also that $f(\mathsf{q}^\prime(a)) = \mathsf{q}(a)$ for all $a \in R$. Since $\mathsf{M}_R(f)$ is clearly linear, we show that it also preserves the action:
\[
\mathsf{M}^\circ_R(f)\left( a \cdot \mathsf{D}_{\lambda^\prime}(x) \right)
=
\mathsf{M}^\circ_R(f)\left( \mathsf{D}_{\lambda^\prime}(\mathsf{q}(a)x) \right)
=
\mathsf{D}_{\lambda}\left(f(\mathsf{q}(a)x) \right)
\]
\[
=
\mathsf{D}_{\lambda}\left(f(\mathsf{q}^\prime(a))f(x) \right)
=
\mathsf{D}_{\lambda}\left(\mathsf{q}(a)f(x) \right)
=
a \cdot  \mathsf{D}_{\lambda}(f(x))
=
a \cdot  \mathsf{M}^\circ_R(f)( \mathsf{D}_{\lambda^\prime}(x))
\]
So we have that $\mathsf{M}_R(f)$ is an $R$-linear morphism, and so $\mathsf{M}^\circ_R$ is well-defined. Clearly, $\mathsf{M}^\circ_R$ also preserves composition and identities, so $\mathsf{M}^\circ_R$ is also a functor. Furthermore, we also have that $\mathsf{M}^\circ_R \circ \mathsf{M}_R= \mathsf{1}_{\mathsf{MOD}^{op}_R}$. Next, $\psi$ and $\psi^{-1}$ are well-defined component-wise and are inverses at each component. Therefore, it suffices to show that $\psi$ is natural and then it will follow that $\psi^{-1}$ is also natural.  If $f: \mathcal{E} \to \mathcal{E}^\prime$ is a differential bundle morphism over $R$ in $(\mathsf{CRING}^{op}, \mathbb{T})$, then $f \circ \lambda^\prime = \lambda \circ \mathsf{T}(f)$. In particular, this means that $f(\lambda^\prime(\mathsf{d}(x))) = \lambda(\mathsf{d}(f(x)))$. However, we can rewrite this as $f\left( \mathsf{D}_{\lambda^\prime}(x) \right) = \mathsf{D}_{\lambda}\left( f(x)  \right)$. Therefore, we compute on generators that: 
\begin{align*}
 \psi_{\mathcal{E}} \left( \mathsf{M}_R\left( \mathsf{M}^\circ_R(f) \right) (a) \right) &= \psi_{\mathcal{E}}(a) = \mathsf{q}(a) = f(\mathsf{q}^\prime(a)) = f(\psi_{\mathcal{E}^\prime}(a))
\end{align*}
and
\[
\psi_{\mathcal{E}} \left( \mathsf{M}_R\left( \mathsf{M}^\circ_R(f) \right) \left( \mathsf{D}_{\lambda^\prime}(x) \right) \right)
=
\psi_{\mathcal{E}}\left( \mathsf{M}^\circ_R(f) \left( \mathsf{D}_{\lambda^\prime}\left(x \right) \right)  \right)
\]
\[
=
\psi_{\mathcal{E}}\left( \mathsf{D}_{\lambda}\left( f(x)  \right)  \right)
= 
\mathsf{D}_{\lambda}\left( f(x)  \right)
= 
\left( \mathsf{D}_{\lambda^\prime}(x) \right)
=
f\left(\psi_{\mathcal{E}^\prime}\left( \mathsf{D}_{\lambda^\prime}(x) \right) \right) 
\]
    So $ \psi_{\mathcal{E}} \circ \mathsf{M}_R\left( \mathsf{M}^\circ_R(f) \right) = f \circ \psi_{\mathcal{E}^\prime}$ in $\mathsf{CRING}$. Therefore $\psi$ is a natural transformation, and it follows that so is $\psi^{-1}$. Thus, $\psi$ and $\psi^{-1}$ are natural isomorphisms. So we conclude that we have an equivalence of categories: $\mathsf{MOD}^{op}_R \simeq \mathsf{DBUN}_{\mathbb{T}}\left[ R\right]$. 
\end{proof}

It then follows that we have an equivalence between the category of differential objects and the opposite category of Abelian groups. So let $\mathsf{Ab}$ be the category whose objects are Abelian groups and whose morphisms are group morphisms.   

\begin{corollary} There is an equivalence of categories: \[ \mathsf{DBUN}\left[ (\mathsf{CRING}^{op}, \mathbb{T}) \right] \simeq (\mathsf{MOD}_\mathbb{Z})^{op} \simeq \mathsf{Ab}^{op}. \]
\end{corollary}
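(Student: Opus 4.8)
The plan is to obtain this equivalence as an immediate specialization of Theorem \ref{thm:mod-diff-ring-aff-1} to the case $R = \mathbb{Z}$, combined with two standard identifications: that $\mathbb{Z}$ is the terminal object of $\mathsf{CRING}^{op}$, and that $\mathbb{Z}$-modules are precisely Abelian groups. No fresh computation with the tangent structure of $(\mathsf{CRING}^{op}, \mathbb{T})$ is needed, since everything of substance has already been carried out in establishing Theorem \ref{thm:mod-diff-ring-aff-1} and Corollary \ref{cor:diffobj-aff}; the work here is purely that of assembling those pieces.

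First I would recall that the left-hand category is the category of differential objects in $(\mathsf{CRING}^{op}, \mathbb{T})$, that is, differential bundles over the terminal object. Since the initial object of $\mathsf{CRING}$ is $\mathbb{Z}$, the terminal object of $\mathsf{CRING}^{op}$ is $\mathbb{Z}$, so by Definition \ref{def:diffobj} this category is exactly $\mathsf{DBUN}_{\mathbb{T}}[\mathbb{Z}]$. Next I would invoke Theorem \ref{thm:mod-diff-ring-aff-1} with $R = \mathbb{Z}$, which yields an equivalence $\mathsf{MOD}^{op}_{\mathbb{Z}} \simeq \mathsf{DBUN}_{\mathbb{T}}[\mathbb{Z}]$, realized by the functors $\mathsf{M}_{\mathbb{Z}}$ and $\mathsf{M}^\circ_{\mathbb{Z}}$ already constructed there. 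Finally, since a $\mathbb{Z}$-module is the same data as an Abelian group and a $\mathbb{Z}$-linear map is the same as a group homomorphism, we have $\mathsf{MOD}_{\mathbb{Z}} = \mathsf{Ab}$ on the nose, hence $\mathsf{MOD}^{op}_{\mathbb{Z}} = \mathsf{Ab}^{op}$. Composing these gives the stated chain of equivalences.

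The only point meriting explicit care is the identification of the abstract notion of differential object with the concrete category $\mathsf{DBUN}_{\mathbb{T}}[\mathbb{Z}]$ to which the theorem applies. This is supplied by the definition $\mathsf{DIFF}[(\mathbb{X}, \mathbb{T})] = \mathsf{DBUN}_{\mathbb{T}}[\ast]$ at the level of the category, and reconfirmed on objects by Corollary \ref{cor:diffobj-aff}, which already records that differential objects correspond to $\mathbb{Z}$-modules, i.e.\ Abelian groups. No separate morphism-level verification is required, because the equivalence of Theorem \ref{thm:mod-diff-ring-aff-1} is already an equivalence of the full categories $\mathsf{MOD}^{op}_{R}$ and $\mathsf{DBUN}_{\mathbb{T}}[R]$, including all differential bundle morphisms over the base. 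Consequently there is essentially no obstacle to overcome, and the corollary follows formally from the results preceding it.
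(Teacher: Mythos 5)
Your proposal is correct and is essentially the paper's own argument: the corollary is obtained by specializing Theorem \ref{thm:mod-diff-ring-aff-1} to $R = \mathbb{Z}$, the terminal object of $\mathsf{CRING}^{op}$, and identifying $\mathbb{Z}$-modules with Abelian groups so that $\mathsf{MOD}^{op}_{\mathbb{Z}} = \mathsf{Ab}^{op}$. You also correctly read the left-hand category as the category of differential objects, i.e.\ $\mathsf{DBUN}_{\mathbb{T}}[\mathbb{Z}]$ (via Definition \ref{def:diffobj} and Corollary \ref{cor:diffobj-aff}), which is what the statement intends.
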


We now define an equivalence of categories between $\mathsf{MOD}^{op}$ and $\mathsf{DBUN}\left[ (\mathsf{CRING}^{op}, \mathbb{T}) \right]$ as follows: 
\begin{enumerate}[{\em (i)}]
\item Define the functor $\mathsf{M}:\mathsf{MOD}^{op} \to \mathsf{DBUN}_{\mathbb{T}}\left[ R\right]$ which sends an object $(R,M)$ to the differential bundle $\mathsf{M}(R,M) = \mathsf{M}_R(M)$, and sends a map $(g,f): (R,M) \to (R^\prime, M^\prime)$ in $\mathsf{MOD}$ to the differential bundle morphism $\mathsf{M}(f) = (\mathsf{M}_R(f), g):\mathsf{M}_{R^\prime}(M^\prime) \to  \mathsf{M}_R(M)$. 
\item Define the functor $\mathsf{M}^\circ: \mathsf{DBUN}\left[ (\mathsf{CRING}^{op}, \mathbb{T}) \right] \to \mathsf{MOD}^{op}$ which sends a differential bundle with negatives ${\mathcal{E} =(\mathsf{q}: E \to R, \sigma, \mathsf{z}, \lambda, \iota)}$ to the 
pair $\mathsf{M}^\circ(\mathcal{E}) = (R, \mathsf{im}(\mathsf{D}_\lambda))$, and sends a differential bundle morphism $(f,g): \mathcal{E} =(\mathsf{q}: E \to R, \sigma, \mathsf{z}, \lambda, \iota) \to \mathcal{E}^\prime =({\mathsf{q}^\prime: E^\prime \to R^\prime}, \sigma^\prime, \mathsf{z}^\prime, \lambda^\prime, \iota^\prime)$ to the pair $\mathsf{M}^\circ(f,g) = (g, \mathsf{M}^\circ_R(f))$. 
\item Observe that $\mathsf{M}^\circ \circ \mathsf{M}= \mathsf{1}_{\mathsf{MOD}^{op}}$. 
\item Define the natural isomorphism $\overline{\psi}: \mathsf{1}_{ \mathsf{DBUN}\left[ (\mathsf{CRING}^{op}, \mathbb{T}) \right]} \Rightarrow \mathsf{M} \circ \mathsf{M}^\circ$ as $\overline{\psi}_{\mathcal{E}} = (1, \psi_{\mathcal{E}})$, with inverse natural isomorphism $\overline{\psi}^{-1}: \mathsf{M} \circ \mathsf{M}^\circ \Rightarrow \mathsf{1}_{ \mathsf{DBUN}\left[ (\mathsf{CRING}^{op}, \mathbb{T}) \right]}$ as $\overline{\psi}^{-1}_{\mathcal{E}} = (1, \psi^{-1}_{\mathcal{E}})$. 
\end{enumerate}

\begin{theorem} \label{thm:mod-diff-ring-aff-2} We have an equivalence of categories: 
\[\mathsf{MOD}^{op} \simeq \mathsf{DBUN}\left[ (\mathsf{CRING}^{op}, \mathbb{T}) \right]\]
\end{theorem}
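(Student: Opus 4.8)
The plan is to follow the same strategy as the proof of Theorem \ref{thm:mod-diff-ring-2}, upgrading the fixed-base equivalence of Theorem \ref{thm:mod-diff-ring-aff-1} to one in which the base ring is allowed to vary. Almost all of the work is already done: the only genuinely new ingredient is checking that the functors $\mathsf{M}$ and $\mathsf{M}^\circ$ interact correctly with the base-change component $g$, and that the natural isomorphism $\overline{\psi}$ assembles from its fixed-base counterpart $\psi$ established in Theorem \ref{thm:mod-diff-ring-aff-1}. The observation $\mathsf{M}^\circ \circ \mathsf{M} = \mathsf{1}_{\mathsf{MOD}^{op}}$ is just Lemma \ref{lem:miso} applied on each object, so only the half-round trip $\mathsf{M} \circ \mathsf{M}^\circ \cong \mathsf{1}$ must be supplied, which is the role of $\overline{\psi}$.

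First I would verify that $\mathsf{M}$ is well-defined on morphisms. Given a map $(g,f): (R,M) \to (R^\prime, M^\prime)$ in $\mathsf{MOD}$, one checks that $\mathsf{M}_R(f): \mathsf{Sym}_R(M) \to \mathsf{Sym}_{R^\prime}(M^\prime)$, defined on generators by $a \mapsto g(a)$ and $m \mapsto f(m)$, is a well-defined ring morphism; this uses precisely that $f$ is $R$-linear over $g$, i.e.\ $f(a \cdot m) = g(a) \cdot f(m)$. One then checks that $(\mathsf{M}_R(f), g)$ satisfies the two squares of (\ref{diffbunmap}). The lift square is the same generator-level computation carried out in Theorem \ref{thm:mod-diff-ring-aff-1}, while the projection square now additionally records the base-change morphism $g$; both reduce to short checks on generators. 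Functoriality of $\mathsf{M}$ (preservation of composition and identities) is immediate from the pointwise definition.

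Next I would verify that $\mathsf{M}^\circ$ is well-defined on morphisms. For a differential bundle morphism $(f,g): \mathcal{E} \to \mathcal{E}^\prime$, the induced map $\mathsf{M}^\circ_R(f): \mathsf{im}(\mathsf{D}_{\lambda^\prime}) \to \mathsf{im}(\mathsf{D}_\lambda)$ is clearly linear, and its compatibility with the base change $g$ follows exactly as in the fixed-base computation of Theorem \ref{thm:mod-diff-ring-aff-1}, now using the relation between the two projections recorded by $g$ (together with Lemma \ref{lem:diffbunmapadd}) to match the two module actions. Again functoriality is immediate. Finally, each component $\overline{\psi}_\mathcal{E} = (1, \psi_\mathcal{E})$ is a differential bundle morphism because $\psi_\mathcal{E}$ is one \emph{over $R$} by Lemma \ref{lem:psiiso}, and $\overline{\psi}_\mathcal{E}$, $\overline{\psi}^{-1}_\mathcal{E}$ are mutually inverse componentwise; as in the proof of Theorem \ref{thm:mod-diff-ring-2} it then suffices to prove naturality of $\overline{\psi}$, whence naturality of $\overline{\psi}^{-1}$ follows. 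The naturality square for $(f,g)$ splits into its base component (trivial, since $\overline{\psi}$ is the identity on bases) and its total-space component, which is the naturality of $\psi$ essentially established in Theorem \ref{thm:mod-diff-ring-aff-1}, now read across varying base rings.

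I expect the main obstacle to be purely bookkeeping: keeping straight the two layers of opposite categories, $\mathsf{MOD}^{op}$ and $\mathsf{CRING}^{op}$, so that the base-change morphism $g$ points in the correct direction and the ``$M^\prime$ viewed as an $R$-module via $g$'' conventions line up with the direction in which $\mathsf{M}_R(f)$ and $\mathsf{M}^\circ_R(f)$ run. Once the directions are fixed, every diagram to be checked reduces to a generator-level computation already seen in the fixed-base setting, and the equivalence follows from $\mathsf{M}^\circ \circ \mathsf{M} = \mathsf{1}_{\mathsf{MOD}^{op}}$ together with the natural isomorphism $\overline{\psi}: \mathsf{1} \Rightarrow \mathsf{M} \circ \mathsf{M}^\circ$.
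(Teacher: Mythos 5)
Your proposal is correct and takes essentially the same route as the paper's proof: both reduce well-definedness of $\mathsf{M}$ and $\mathsf{M}^\circ$ to the fixed-base generator computations from Theorem \ref{thm:mod-diff-ring-aff-1}, record that $\mathsf{M}^\circ \circ \mathsf{M} = \mathsf{1}_{\mathsf{MOD}^{op}}$, and obtain $\overline{\psi}$ and $\overline{\psi}^{-1}$ as natural isomorphisms componentwise from $\psi$ and $\psi^{-1}$ of Lemma \ref{lem:psiiso}, with naturality splitting into a trivial base component and the already-established total-space component. If anything, you give slightly more detail (e.g.\ that well-definedness of the total-space map uses $f(a \cdot m) = g(a) \cdot f(m)$) than the paper, which simply cites the similarity with the fixed-base argument.
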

\begin{proof} The proof that $\mathsf{M}$ and $\mathsf{M}^\circ$ are well-defined functors is similar to the proof that $\mathsf{M}_R$ and $\mathsf{M}_R^\circ$ are well-defined in the proof of Theorem \ref{thm:mod-diff-ring-aff-1}. Furthermore, it also follows that $\mathsf{M}^\circ \circ \mathsf{M}= \mathsf{1}_{\mathsf{MOD}^{op}}$. On the other hand, since $\psi_\mathcal{E}$ and $\psi^{-1}_\mathcal{E}$ are differential bundle morphisms over the base commutative ring, it follows by definition that $\overline{\psi}_\mathcal{E} = (1, \psi_\mathcal{E})$ and $\overline{\psi}^{-1}_\mathcal{E} = (1, \psi^{-1}_\mathcal{E})$ are differential bundle morphisms, so $\overline{\psi}$ and $\overline{\psi}^{-1}$ are well-defined. Lastly, that $\overline{\psi}$, and $\overline{\psi}^{-1}$ are natural isomorphisms follows directly from the fact that $\psi$, and $\psi^{-1}$ are natural isomorphisms. So we conclude that we indeed have an equivalence of categories: $\mathsf{MOD}^{op} \simeq \mathsf{DBUN}\left[ (\mathsf{CRING}^{op}, \mathbb{T}) \right]$. 
\end{proof}

\begin{remark} The equivalence between modules and differential bundles is also true in more general settings. Indeed, both for the opposite category of commutative semirings and the opposite category of commutative algebras over a (semi)ring, differential bundles correspond precisely to modules via the above constructions (where the latter follows from Corollary \ref{cor:tan_algebras} and Proposition \ref{prop:slice_diff_bundles}). As explained before, in a setting where one does not have negatives, we would again also need to prove an extra pushout square. On the other hand, it is unclear if this result always generalizes to the coEilenberg-Moore category of a differential category. If the differential category has enough limits and colimits, then it is possible to generalize the constructions of Lemma \ref{lem:diff-mod-aff} and Lemma \ref{lem:mod-diff-aff}, and then we obtain a bijective correspondence between differential bundles and comodules of the coalgebras of the comonad of said differential category. However, in general, a differential category need not have all limits or colimits. In future work, it would be interesting to characterize differential bundles in arbitrary differential categories and understand what assumptions are needed so that they correspond to (co)modules. 
\end{remark}

\subsection{Differential bundles in schemes}

In this section, we show how the characterization of differential bundles in affine schemes can be extended to characterize differential bundles in the larger category of schemes.  Since schemes are the gluing of affine schemes, this follows relatively straightforwardly from the results of the previous sections, so here we merely sketch the proofs. Our first goal is to show that for any differential bundle ${\mathsf{q}: E \to A}$ in schemes, the projection $\mathsf{q}$ is an affine map. Let us first quickly recall the definition of affine morphisms and equivalent characterizations \cite[Section 29.11]{stacks-project}.

\begin{definition} \cite[Definition 29.11.1]{stacks-project}
A morphism of schemes $f: X \to Y$ is \textbf{affine} if for all affine opens $U$ of $Y$, the inverse image $f^{-1}(U)$, that is, the following pullback: 
    \[ \xymatrix{f^{-1}(U) \ar[r] \ar[d] & X \ar[d] \\ U \ar[r] & Y} \]
is itself affine.
\end{definition}

\begin{proposition}\label{prop:affine} \cite[Lemma 29.11.3]{stacks-project} For a scheme morphism $f: X \to Y$, the following are equivalent:
\begin{enumerate}[{\em (i)}]
    \item $f$ is affine; 
    \item $Y$ has a covering by affine opens $\{U_i\}_{i \in I}$ such that for all $i \in I$, $f^{-1}(U_i)$ is affine; 
    \item $X = \mathsf{Spec}(A)$ for some quasicoherent sheaf of algebras $A$ on the sheaf ${\cal O}_Y$.
\end{enumerate}
\end{proposition}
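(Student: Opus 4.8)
The plan is to prove the three conditions equivalent by establishing the links (i) $\Rightarrow$ (ii), (ii) $\Rightarrow$ (i), and (i) $\Leftrightarrow$ (iii), since each has a natural and largely independent argument. The implication (i) $\Rightarrow$ (ii) is immediate: being a scheme, $Y$ admits at least one covering by affine opens $\{U_i\}_{i \in I}$, and affineness of $f$ guarantees by definition that $f^{-1}(U)$ is affine for \emph{every} affine open $U$, so in particular for each $U_i$.

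The genuinely substantive implication is (ii) $\Rightarrow$ (i), and this is where I expect the main obstacle to lie. The problem is to upgrade affineness from a single well-chosen cover to an arbitrary affine open $U \subseteq Y$. The strategy is to reduce to the case where the base is affine: given an arbitrary affine open $U$, one intersects the hypothesized cover with $U$ and refines it to a cover of $U$ by basic (distinguished) opens $D(g_j) \subseteq U$ on each of which $f^{-1}$ is affine, and then invokes the key affineness criterion — that a scheme equipped with a morphism to an affine scheme $\mathsf{Spec}(R)$, where $\mathsf{Spec}(R)$ is covered by distinguished opens $D(g_j)$ with the $g_j$ generating the unit ideal and each preimage affine, is itself affine. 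This criterion is the heart of the matter; it is typically established via Serre's cohomological characterization of affineness, or equivalently via an affine communication / gluing lemma, and it is what makes the statement nontrivial rather than a formal unwinding of definitions.

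For the equivalence with (iii) I would argue both directions locally over affine opens of $Y$. For (iii) $\Rightarrow$ (i): the relative spectrum $\mathsf{Spec}(A)$ of a quasicoherent sheaf of $\mathcal{O}_Y$-algebras is built by gluing the affine schemes $\mathsf{Spec}(A(U))$ over affine opens $U \subseteq Y$, and by construction the preimage of each such $U$ is exactly $\mathsf{Spec}(A(U))$, which is affine; hence (ii), and therefore (i), holds. For (i) $\Rightarrow$ (iii): set $A := f_* \mathcal{O}_X$, which is a sheaf of $\mathcal{O}_Y$-algebras. Affineness of $f$ ensures $A$ is quasicoherent, since pushforward along an affine morphism preserves quasicoherence — checked on affine opens, where it reduces to the statement that the pushforward corresponds to restriction of scalars of a module. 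The canonical comparison morphism $X \to \mathsf{Spec}(f_* \mathcal{O}_X)$ is then an isomorphism because it is so over each affine open $U$ of $Y$, where it recovers the tautological identification $f^{-1}(U) = \mathsf{Spec}(\mathcal{O}_X(f^{-1}(U)))$.

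In summary, the only delicate point is the locality-on-the-base step in (ii) $\Rightarrow$ (i); all remaining steps amount to unwinding the relative Spec construction and verifying statements over affine opens, where they reduce to commutative-algebra facts about modules and algebras over a ring. As this is a standard result in scheme theory, for the purposes of this paper it suffices to cite \cite[Lemma 29.11.3]{stacks-project}.
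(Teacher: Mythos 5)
Your proposal is correct and ends exactly where the paper does: the paper gives no proof of this proposition at all, but simply cites \cite[Lemma 29.11.3]{stacks-project}, which is precisely the conclusion you reach. Your sketch of the underlying argument --- the trivial implication (i) $\Rightarrow$ (ii), the substantive locality-on-the-base criterion needed for (ii) $\Rightarrow$ (i), and the relative $\mathsf{Spec}$ construction together with $A = f_*\mathcal{O}_X$ for the equivalence with (iii) --- accurately mirrors the structure of the cited Stacks Project proof, so there is nothing to correct.
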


The following is a general result about affine morphisms which will be useful below:

\begin{lemma}\label{lem:affineretract}
Affine morphisms are closed under retract, that is, if we have scheme morphisms

\[ \xymatrix{X_1 \ar@/^-0.5pc/[rr]_{s} \ar[dr]_{f_1} & & X_2 \ar@/^-0.5pc/[ll]_{r} \ar[dl]^{f_2} \\ & Y &} \]

with $(s,r)$ a section/retraction pair in the category of schemes over $Y$ and $f_2$ is affine, then so is $f_1$. 
\end{lemma}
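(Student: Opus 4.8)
The plan is to verify affineness of $f_1$ locally on the base using Proposition \ref{prop:affine}, thereby reducing the statement to the purely absolute fact that a retract of an affine scheme is again affine. First I would fix an affine open cover $\{U_i\}_{i \in I}$ of $Y$. Since $f_2$ is affine, each preimage $f_2^{-1}(U_i)$ is an affine scheme by Proposition \ref{prop:affine}. To conclude via condition (ii) of that proposition, it then suffices to show that each $f_1^{-1}(U_i)$ is affine.

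Next I would restrict the retract diagram over each $U_i$. Because $s$ and $r$ are morphisms over $Y$, we have $f_2 \circ s = f_1$ and $f_1 \circ r = f_2$, whence $s^{-1}\big(f_2^{-1}(U_i)\big) = f_1^{-1}(U_i)$ and $r^{-1}\big(f_1^{-1}(U_i)\big) = f_2^{-1}(U_i)$. Thus $s$ and $r$ restrict to morphisms $s_i \colon f_1^{-1}(U_i) \to f_2^{-1}(U_i)$ and $r_i \colon f_2^{-1}(U_i) \to f_1^{-1}(U_i)$ on the corresponding open subschemes, and $r_i \circ s_i$ is the restriction of $r \circ s = 1_{X_1}$, hence the identity. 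Therefore $f_1^{-1}(U_i)$ is a retract of the affine scheme $f_2^{-1}(U_i)$.

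It then remains to prove the key lemma, which I expect to be the main (and essentially only nontrivial) point: \emph{a retract of an affine scheme is affine}. Given $X \xrightarrow{s} \mathsf{Spec}(B) \xrightarrow{r} X$ with $r \circ s = 1_X$, the composite $e := s \circ r$ is an idempotent endomorphism of $\mathsf{Spec}(B)$, and the data $(X, s, r)$ exhibits $X$ as a splitting of $e$ in the category of schemes. Since morphisms $\mathsf{Spec}(B) \to \mathsf{Spec}(B)$ correspond to ring endomorphisms of $B$, the idempotent $e$ corresponds to an idempotent ring endomorphism $\phi \colon B \to B$. The subring $C = \{\, b \in B : \phi(b) = b \,\}$, together with the inclusion $C \hookrightarrow B$ and the corestriction $b \mapsto \phi(b)$ of $\phi$, splits $\phi$ in the category of commutative rings; applying $\mathsf{Spec}$ produces a splitting of $e$ by the affine scheme $\mathsf{Spec}(C)$. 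As splittings of an idempotent are unique up to canonical isomorphism, $X \cong \mathsf{Spec}(C)$ is affine.

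Feeding this back, each $f_1^{-1}(U_i)$ is affine, so $f_1$ is affine by Proposition \ref{prop:affine}. The subtlety to keep in mind is that $f_1$ is not assumed affine at the outset, so one cannot simply split $e$ inside the category of affine morphisms over $Y$; the argument must construct the splitting independently (here via ring theory) and only then identify it with the given retract by uniqueness. An equivalent global variant, avoiding the choice of a cover, would realize $X_2$ as a relative spectrum $\underline{\mathsf{Spec}}_Y(\mathcal{A})$ of a quasicoherent $\mathcal{O}_Y$-algebra, split the induced idempotent $\mathcal{O}_Y$-algebra endomorphism of $\mathcal{A}$ (equalizers of quasicoherent algebras again being quasicoherent), and invoke the same uniqueness of idempotent splittings.
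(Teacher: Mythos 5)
Your proof is correct, and it shares the paper's overall skeleton: both arguments localize over affine opens of $Y$, restrict the section/retraction pair there (your preimage computation $s^{-1}(f_2^{-1}(U)) = f_1^{-1}(U)$ is exactly the paper's pullback construction of $s_U$), and thereby reduce the lemma to the absolute statement that a retract of an affine scheme is affine. Where you genuinely diverge is in how that key statement is established. The paper views schemes through their functors of points in $[\mathsf{CRING},\mathsf{SET}]$, notes that $f_1^{-1}(U)$ is then a retract of a representable, and cites a general lemma of Borceux: representables in a presheaf category are closed under retracts whenever the underlying category has split idempotents. You instead unwind what that lemma says in this instance: the idempotent $e = s \circ r$ on $\mathsf{Spec}(B)$ corresponds, by full faithfulness of $\mathsf{Spec}$ on affine schemes, to an idempotent ring endomorphism $\phi$ of $B$; this splits in $\mathsf{CRING}$ through the fixed subring $C = \lbrace b \in B : \phi(b) = b\rbrace$; applying $\mathsf{Spec}$ and invoking uniqueness of idempotent splittings gives $X \cong \mathsf{Spec}(C)$. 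The two mechanisms are the same idea in different clothing (Borceux's lemma is proved precisely by splitting the idempotent induced on the representing object), but your version is self-contained and elementary, needing only the ring/affine-scheme anti-equivalence and the easy fact that idempotents split in $\mathsf{CRING}$, while the paper's version is shorter by citation and isolates the only categorical property actually used, namely Cauchy completeness of $\mathsf{CRING}$. Your closing caveat --- that the splitting must be constructed independently and then matched to the given retract by uniqueness, rather than carried out ``inside'' affine morphisms over $Y$ --- and your sketched global variant via relative $\mathsf{Spec}$ of a quasicoherent algebra are both sound.
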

\begin{proof}
Let $U$ be an affine open subset of $Y$. Then we can define a section/retraction pair $(s_U, r_U)$ between $f_1^{-1}(U)$ and $f_2^{-1}(U)$ with both defined by pullback.  For example, here is the defining diagram for $s_U$:
\[ \xymatrix{f_1^{-1}(U) \ar[rr] \ar@{-->}[dr]^{s_U} \ar@/^-0.7pc/[ddr] & & X_1 \ar[dr]^{s} & \\
& f_2^{-1}(U) \ar[rr] \ar[d] & & X_2 \ar[d]^{f_2} \\
& U \ar[rr] && Y} \]
Thus $f_{1}^{-1}(U)$ is a retract of a representable element in the presheaf category $[\mathsf{CRING},\mathsf{SET}]$ (where $\mathsf{SET}$ is the category of sets and arbitrary functions between them). But so long as a category $\X$ has split idempotents, then representables in the functor category $[\X^{op},\mathsf{SET}]$ are closed under retract \cite[Lemma 6.5.6]{bourceux}. So $f_1^{-1}(U)$ is itself representable, and so by definition $f_1$ is affine, as required. 
\end{proof}

We may now prove that for a differential bundle in the category of schemes, the projection is affine. 

\begin{proposition}
In the category of schemes, if $\mathsf{q}: E \to A$ is a differential bundle, then $\mathsf{q}$ is affine.
\end{proposition}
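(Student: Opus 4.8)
The plan is to reduce the statement to two standard facts about affine morphisms—that tangent bundle projections are affine, and that affineness is inherited under base change and under retracts—and then to invoke MacAdam's structural result that every differential bundle is a retract of a pullback of a tangent bundle, which was recalled in Example~\ref{ex:smoothbundles}.

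First I would check that for any scheme $B$ the tangent bundle projection $\mathsf{p}_B\colon \mathsf{T}(B)\to B$ is affine. By the very definition of the tangent structure on $\mathsf{SCH}$, $\mathsf{T}(B)$ is obtained by gluing the $\mathsf{Spec}$ of the symmetric algebra of the Kähler differentials of each affine piece; concretely $\mathsf{T}(B)=\mathsf{Spec}_B\!\big(\mathsf{Sym}_{\mathcal{O}_B}(\Omega_B)\big)$ is the relative $\mathsf{Spec}$ of a quasicoherent sheaf of $\mathcal{O}_B$-algebras. Hence $\mathsf{p}_B$ is affine by characterization (iii) of Proposition~\ref{prop:affine}. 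Next I would recall the standard fact that affine morphisms are stable under base change \cite{stacks-project} (which also follows from characterization (ii) of Proposition~\ref{prop:affine} by working locally on the base, where affine-over-affine is affine). Consequently, any scheme-theoretic pullback of a tangent bundle projection—in particular the pullback $g^{*}\mathsf{T}(B)=A\times_B \mathsf{T}(B)\to A$ of $\mathsf{p}_B$ along a morphism $g\colon A\to B$—is again affine.

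With these in hand, I would apply the result of \cite{macadam2021vector} recalled in Example~\ref{ex:smoothbundles}: since $(\mathsf{SCH},\mathbb{T})$ is a Rosický tangent category, the differential bundle $\mathsf{q}\colon E\to A$ is a retract, as a bundle over $A$, of a pullback $g^{*}\mathsf{T}(B)\to A$ of some tangent bundle. That is, there is a section/retraction pair $(s,r)$ living over $A$ that exhibits $\mathsf{q}$ as a retract of the affine morphism $g^{*}\mathsf{T}(B)\to A$. Lemma~\ref{lem:affineretract} (affine morphisms are closed under retract) then yields that $\mathsf{q}$ is affine, completing the argument.

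I expect the only genuinely delicate point to be the bookkeeping in the last paragraph: one must confirm that MacAdam's retract can be taken in the slice $\mathsf{SCH}/A$, i.e. that the section and retraction are morphisms over the fixed base $A$, since Lemma~\ref{lem:affineretract} is stated for retracts over a fixed $Y$. This holds because the retract is produced inside the category of differential bundles over $A$, so its underlying maps are automatically morphisms over $A$. Everything else is an assembly of stability properties of affine morphisms, which is why only a sketch is needed here.
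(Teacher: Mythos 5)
Your proposal is correct and follows essentially the same route as the paper's proof: tangent bundle projections are affine because they are relative $\mathsf{Spec}$ of a quasicoherent sheaf of algebras, affine morphisms are stable under pullback and retract (Lemma \ref{lem:affineretract}), and MacAdam's result \cite[Corollary 3.1.4]{macadam2021vector} exhibits any differential bundle as a retract of a pullback of a tangent bundle. Your extra remark that the retraction lives in the slice over $A$ is a sensible clarification of exactly the hypothesis under which Lemma \ref{lem:affineretract} applies.
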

\begin{proof}
By \cite[Corollary 3.1.4]{macadam2021vector}, $\mathsf{q}$ is a retract of a pullback of a tangent bundle projection $\mathsf{p}_A: \mathsf{T}(A) \to A$.  By definition, $\mathsf{T}(A)$ is $\mathsf{Spec}$ of a quasicoherent sheaf of algebras on ${\cal O}_A$, so by Lemma \ref{prop:affine}, $\mathsf{p}_A$ is affine.  But affines are closed under pullback \cite[Lemma 29.11.8]{stacks-project} and retracts (Lemma \ref{lem:affineretract}), so $\mathsf{q}$ is affine.  
\end{proof}

We may now prove that every differential bundle is a $\mathsf{Spec}$ of $\mathsf{Sym}$ of a quasicoherent sheaf of modules. 

\begin{proposition}
If $\mathsf{q}: E \to A$ is a differential bundle in the category of schemes, then $E$ is $\mathsf{Spec}$ of $\mathsf{Sym}$ of a quasicoherent sheaf of modules.
\end{proposition}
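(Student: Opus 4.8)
The plan is to reduce to the affine case by restricting to an affine open cover of $A$, and then to glue the resulting modules into a single quasicoherent sheaf. First I would fix an affine open cover $\{U_i = \mathsf{Spec}(R_i)\}_{i \in I}$ of $A$. The tangent bundle of schemes is computed locally: for an open immersion $j \colon U \hookrightarrow A$ the naturality square for the projection $\mathsf{p}$, relating $\mathsf{T}(U)$, $\mathsf{T}(A)$, $U$ and $A$, is a pullback, since Kähler differentials restrict to opens ($\Omega(U) = \Omega(A)|_U$). Consequently, restricting a differential bundle along an open immersion again yields a differential bundle: one sets $E|_U = E \times_A U$, uses $\mathsf{T}(E|_U) \cong \mathsf{T}(E) \times_E E|_U$ to restrict the lift, and checks that all the axioms in Definition \ref{def:differentialbundle} descend to the restriction. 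Applying this to each $U_i$ gives a differential bundle $q_i \colon E|_{U_i} \to U_i$ over $U_i$. By the previous proposition $q$ is affine, so each $E|_{U_i} = q^{-1}(U_i)$ is an affine scheme $\mathsf{Spec}(B_i)$; since affine schemes form the full subcategory $\mathsf{CRING}^{op}$ and the scheme tangent structure restricts to the affine one, $q_i$ is precisely a differential bundle over $R_i$ in $(\mathsf{CRING}^{op}, \mathbb{T})$.

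Next I would invoke the affine characterization. By Proposition \ref{prop:mod-diff-aff}, together with the explicit constructions of Lemma \ref{lem:mod-diff-aff} and Lemma \ref{lem:psiiso}, the differential bundle $q_i$ corresponds to the $R_i$-module $M_i := \mathsf{im}(\mathsf{D}_{\lambda_i})$ and comes with a canonical isomorphism $B_i \cong \mathsf{Sym}_{R_i}(M_i)$. Thus locally $E|_{U_i} \cong \mathsf{Spec}\left(\mathsf{Sym}_{R_i}(M_i)\right)$.

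The crux is then to assemble the $M_i$ into a quasicoherent sheaf of $\mathcal{O}_A$-modules $\mathcal{M}$. Here I would verify that the construction $M = \mathsf{im}(\mathsf{D}_\lambda)$ is compatible with localization: restricting $\mathcal{E}$ further to a basic open $\mathsf{Spec}((R_i)_f) \subseteq U_i$ yields the localized module $(M_i)_f$, again by the locality of $\mathsf{T}$ and the fact that the lift $\lambda$ is defined globally on $E$. This produces canonical identifications of the $M_i$ over the overlaps $U_i \cap U_j$ satisfying the cocycle condition, i.e. descent data for a quasicoherent sheaf $\mathcal{M}$ with $\mathcal{M}|_{U_i} \cong \widetilde{M_i}$. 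Finally, because $\mathsf{Sym}$ commutes with localization and $\mathsf{Spec}$ glues along the cover, the local isomorphisms $E|_{U_i} \cong \mathsf{Spec}\left(\mathsf{Sym}_{R_i}(M_i)\right)$ patch to a global isomorphism $E \cong \mathsf{Spec}\left(\mathsf{Sym}_{\mathcal{O}_A}(\mathcal{M})\right)$, as required.

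The main obstacle is this gluing step: one must check that the modules $M_i$ produced pointwise by the affine theorem really are compatible on overlaps and form a genuine quasicoherent sheaf, rather than merely an unrelated collection of local modules. Everything rests on the compatibility of $\mathsf{im}(\mathsf{D}_\lambda)$ with restriction to opens, which in turn follows from the locality of the tangent structure on schemes already used in the first step; once that compatibility is pinned down, the remaining verifications are routine bookkeeping with descent.
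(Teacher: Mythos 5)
Your proposal is correct and takes essentially the same route as the paper: cover $A$ by affine opens, use the affineness of $q$ together with closure of differential bundles under pullback to reduce to the affine classification over each $U_i$, and then glue the resulting modules into a quasicoherent sheaf. The paper compresses the steps you spell out, citing Lemma 2.7 of Cockett--Cruttwell for pullback-stability and Vakil (pg.\ 379) for the gluing of $\mathsf{Spec}$ of $\mathsf{Sym}$, but the substance is identical.
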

\begin{proof}
Cover $A$ by affines $U_i$, and since $\mathsf{q}$ is affine, each pullback $q^{-1}(U_i)$: 
\[ \xymatrix{\mathsf{q}^{-1}(U_i) \ar[r] \ar[d] & E \ar[d]^{\mathsf{q}} \\ U_i \ar[r] & A} \]
is also affine. Moreover, by \cite[Lemma 2.7]{cockett2018differential}, differential bundles are closed under pullback.  Thus each map $\mathsf{q}^{-1}(U_i) \to U_i$ is a differential bundle in the category of affine schemes, and hence by Proposition \ref{prop:mod-diff-ring}, each $\mathsf{q}^{-1}(U_i)$ is $\mathsf{Sym}$ of a module on $U_i$.  Thus as $E$ is the gluing of these, $E$ is itself $\mathsf{Spec}$ of $\mathsf{Sym}$ of a quasicoherent sheaf of modules \cite[pg. 379]{vakil}.   
\end{proof}

Conversely, we now prove that every quasicoherent sheaf of modules induces a differential bundle. 

\begin{proposition}
If $M$ is a quasicoherent sheaf of modules on a scheme $A$, then $\mathsf{Spec}$ of $\mathsf{Sym}$ of $M$ is a differential bundle over $A$.  
\end{proposition}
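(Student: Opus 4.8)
The plan is to prove this by reducing to the affine case established in Lemma \ref{lem:mod-diff-aff} and then gluing, exactly in the spirit of the preceding propositions. Write $E := \mathsf{Spec}_A\left(\mathsf{Sym}_{\mathcal{O}_A}(M)\right)$ with its structure morphism $\mathsf{q}: E \to A$. Since $(\mathsf{SCH}, \mathbb{T})$ is a Cartesian Rosický tangent category, by Proposition \ref{prop:prediff} it suffices to equip $(\mathsf{q}, \mathsf{z}, \lambda)$ with the structure of a pre-differential bundle satisfying conditions (\ref{prediff-i}) and (\ref{prediff-ii}); the sum and negative are then produced automatically by Rosický's universality diagram.

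First I would cover $A$ by affine opens $U_i = \mathsf{Spec}(R_i)$, on which $M$ restricts to $\widetilde{M_i}$ for an $R_i$-module $M_i$. Because $\mathsf{Sym}$ commutes with the restriction of quasicoherent sheaves, $E$ restricts over $U_i$ to $\mathsf{Spec}(\mathsf{Sym}_{R_i}(M_i))$, which is precisely the total space of the affine differential bundle $\mathsf{M}_{R_i}(M_i)$ of Lemma \ref{lem:mod-diff-aff}. The zero $\mathsf{z}$ and lift $\lambda$ are defined patch-wise by the affine formulas for $\mathsf{z}_{M_i}$ and $\lambda_{M_i}$. The crucial point is that these constructions are natural in the pair $(R_i, M_i)$: the counit and the derivation defining $\lambda_{M_i}$ are induced by the functoriality of $\mathsf{Sym}$ and the universal derivation, so on each overlap $U_i \cap U_j$ the two locally-defined maps agree. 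Hence the local maps glue to global morphisms $\mathsf{z}: A \to E$ and $\lambda: E \to \mathsf{T}(E)$, using that the tangent functor on schemes is itself defined by gluing affine tangent bundles and so commutes with restriction.

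Next I would verify the pre-differential bundle axioms of Definition \ref{def:prediffbun}. Each is an equality of morphisms of schemes, and equality of scheme morphisms may be checked on an affine cover; since each identity holds on every $U_i$ by Lemma \ref{lem:mod-diff-aff}, it holds globally. For condition (\ref{prediff-i}), the $n$-fold pullbacks $E_n$ exist — they are glued from the affine pullbacks $\mathsf{Spec}(\mathsf{Sym}_{R_i}(M_i)^{\otimes_{R_i} n})$ — and are preserved by each $\mathsf{T}^m$, again because preservation is a local statement already known affine-locally. For condition (\ref{prediff-ii}), Rosický's universality square must be a pullback: pullbacks of schemes are local on the base and $\mathsf{T}$ is a local construction, so it is enough that the square restricts over each $U_i$ to the corresponding affine square, which is a pullback preserved by all $\mathsf{T}^n$ by Lemma \ref{lem:mod-diff-aff}. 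Applying Proposition \ref{prop:prediff} then yields a differential bundle with negatives over $A$.

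The main obstacle I expect is precisely the gluing step: one must check that the affine structure maps are genuinely compatible on overlaps, which amounts to the naturality of the constructions of Lemma \ref{lem:mod-diff-aff} with respect to localization of the base ring and the module. Once this naturality is observed, every remaining verification — the pre-differential bundle axioms, the existence and preservation of the $n$-fold pullbacks, and the universality square — is local and reduces immediately to the affine results, which is why this portion of the argument can be given as a sketch rather than in full detail.
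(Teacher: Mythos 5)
Your proposal is correct and takes essentially the same route as the paper: cover $A$ by affine opens, invoke Lemma \ref{lem:mod-diff-aff} on each patch, glue the zero and lift using the fact that the tangent functor on schemes is compatible with gluing/restriction, and verify the remaining conditions locally. The only cosmetic difference is that you make explicit the reduction to pre-differential bundles via Proposition \ref{prop:prediff} together with the locality of equalities and pullbacks of schemes, which the paper compresses into the phrase ``straightforward calculations'' (its affine lemma was itself proved by exactly this pre-differential bundle route).
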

\begin{proof}
Suppose that $A$ is covered by affines $U_i$.  Then by \cite[pg.379]{vakil}, if $M$ is a quasicoherent sheaf of modules on $A$, then $M$ is the gluing of modules $M_i$ over the $U_i$, and $\mathsf{Spec}$ of $\mathsf{Sym}$ of $M$ is the gluing of $\mathsf{Spec}$ of $\mathsf{Sym}$ of the $M_i$'s.  Thus it suffices to show that such a gluing is a differential bundle over $A$. But by Lemma \ref{lem:mod-diff-aff}, $\mathsf{Spec}$ of $\mathsf{Sym}$ of each $M_i$ is a differential bundle over $A_i$.  It then follows that since the tangent functor on schemes preserves gluings (for an abstract proof of this, see \cite[Prop. 6.15.ii]{cockett2014differential}), the lifts of each such differential bundle $\lambda_i$ glue together to give a lift $\lambda$ for $\mathsf{Spec}$ of $\mathsf{Sym}$ of $M$, and it follows through straightforward calculations that this satisfies the required conditions to be a differential bundle.    
\end{proof}

The results on morphisms follow similarly, and therefore we obtain: 

\begin{theorem}\label{thm:scheme_equivalence_1}
For a scheme $A$, there is an equivalence of categories between differential bundles over $A$ in the tangent category $\mathsf{SCH}$ and the opposite category of quasicoherent sheaves of modules over $A$.  
\end{theorem}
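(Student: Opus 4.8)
The plan is to assemble the equivalence from the four preceding propositions, which already establish the object-level correspondence in both directions, and to supply the (deliberately sketched) argument for morphisms. First I would note that the four propositions immediately before the theorem give, on objects, a bijective correspondence between differential bundles over a scheme $A$ and quasicoherent sheaves of modules over $A$: every differential bundle is $\mathsf{Spec}$ of $\mathsf{Sym}$ of some quasicoherent sheaf of modules, and conversely every such sheaf produces a differential bundle, with these two constructions mutually inverse up to isomorphism. The only genuinely new content is therefore to promote this to a functorial equivalence $\mathsf{DBUN}_\mathbb{T}[A] \simeq \left(\mathsf{QCoh}(A)\right)^{op}$, tracking the direction-reversal that was already forced in the affine case (Theorem \ref{thm:mod-diff-ring-aff-1}).

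The key steps, in order, would be: (1) define the functor sending a quasicoherent sheaf $M$ to the differential bundle $\mathsf{Spec}\,\mathsf{Sym}(M)$ over $A$, and on a sheaf morphism $f\colon M \to M'$ produce a differential bundle morphism in the opposite direction, exactly mirroring the definition of $\mathsf{M}_R(f)$ on the generators $a \mapsto a$, $m \mapsto f(m)$ in the affine case; (2) define the inverse-direction functor sending a differential bundle $(\mathsf{q}\colon E \to A,\sigma,\mathsf{z},\lambda)$ to the quasicoherent sheaf obtained by gluing the local module data $\mathsf{im}(\mathsf{D}_{\lambda_i})$ over an affine cover $\{U_i\}$, and on a differential bundle morphism $f$ to the induced sheaf morphism given locally by $\mathsf{D}_{\lambda'}(x) \mapsto \mathsf{D}_\lambda(f(x))$ as in $\mathsf{M}^\circ_R(f)$; (3) verify that these local assignments are independent of the chosen cover and glue to global morphisms of schemes respectively sheaves, using that the affine-local constructions of Lemma \ref{lem:diff-mod-aff} and Lemma \ref{lem:mod-diff-aff} are natural and compatible on overlaps; and (4) observe that the natural isomorphisms $\overline{\psi}$ and $\overline{\psi}^{-1}$ from the affine equivalence (Theorem \ref{thm:mod-diff-ring-aff-2}) glue, using that the tangent functor on schemes preserves gluings (cited as \cite[Prop. 6.15.ii]{cockett2014differential}), to give the unit and counit of the scheme-level equivalence.

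I expect the main obstacle to be step (3): checking that the locally-defined module $\mathsf{im}(\mathsf{D}_{\lambda_i})$ and the induced sheaf maps are genuinely well-defined sheaves of modules rather than merely a compatible family of affine-local data. Concretely, one must confirm that the image-of-the-derivation construction is stable under the localization maps relating $U_i$ and $U_i \cap U_j$, so that the $M_i$ glue to a quasicoherent sheaf; this is where the quasicoherence condition does real work, and where the direction-reversal must be checked to be coherent across overlaps. The cleanest route is to lean on the fact, already invoked in the preceding propositions, that differential bundles are closed under pullback \cite[Lemma 2.7]{cockett2018differential} and that the pullback of the differential bundle along each inclusion $U_i \hookrightarrow A$ recovers precisely the affine differential bundle whose associated module is $\mathsf{im}(\mathsf{D}_{\lambda_i})$; functoriality of the affine equivalence then forces the restriction maps to agree, so the gluing is automatic. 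The remaining verifications — that $\overline{\psi}$ is natural and that the two functors are quasi-inverse — reduce locally to the already-proven affine statements, so I would dispatch them by the phrase ``the results on morphisms follow similarly,'' exactly as the paper does.
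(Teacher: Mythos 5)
Your proposal is correct and follows essentially the same route as the paper: the object-level correspondence is delegated to the four preceding propositions (affineness of the projection via MacAdam's retract-of-pullback result, then gluing the affine constructions in both directions), and the morphism-level statements are obtained by mirroring the affine functors $\mathsf{M}_R$, $\mathsf{M}^\circ_R$ locally and gluing them together with the natural isomorphisms $\overline{\psi}$, $\overline{\psi}^{-1}$, using closure of differential bundles under pullback and preservation of gluings by the tangent functor. Indeed, the paper's own proof of the theorem is precisely this, compressed into the remark that ``the results on morphisms follow similarly''; your step (3) usefully makes explicit the overlap-compatibility check that the paper leaves implicit.
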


\begin{remark} 
By Corollary \ref{cor:slice_schemes} 
and Proposition \ref{prop:slice_diff_bundles}, for any scheme $A$, there is a similar result for the tangent category of schemes over $A$.    
\end{remark}

\section{Future Work}\label{sec:future_work}

Understanding differential bundles in the tangent categories of commutative rings, affine schemes, and schemes is just the beginning of applying tangent category theory to commutative algebra and algebraic geometry. There are many possible future avenues for exploration based on this work, such as: 
\begin{itemize}
    \item The most immediate next step is to understand how connections in tangent categories apply to these examples.  They seem closely related to connections on modules \cite[Section 8.2]{connections_cqft}, but more work needs to be done to understand the precise relationship between the two notions. 
    \item Tangent categories have a notion of differential forms and de Rham cohomology \cite{deRham}.  Initial investigation with this idea suggests 
    that for affine schemes over $R$, when the coefficient object is taken to be the polynomial ring $R[x]$, then this tangent category cohomology recreates algebraic de Rham cohomology.  However, more work is required to prove this.  Moreover, \cite{deRham} also develops a second notion of cohomology: sector form cohomology.  It is not clear what this should give in the algebraic geometry setting.  
    \item In \cite{joyce}, Dominic Joyce develops algebraic geometry in the setting of $C^{\infty}$-rings.  It seems likely that the categories involved are tangent categories, and so hopefully tangent category theory applied to this particular example will recreate the corresponding notions Joyce has developed. 
    \item A key idea in algebraic geometry is that of a \emph{smooth} morphism or object.  It would be interesting to see if such a notion could be generalized to arbitrary tangent categories (in such a way, that, for example, all objects in the tangent category of smooth manifolds are smooth).
    \item Finally, the Serre-Swan theorem provides a very different way to compare vector bundles to modules.  It would be interesting to see if one could give a proof of the Serre-Swan theorem using the results of this paper. 
\end{itemize}

Thus, we hope that this paper will serve as inspiration for future work in this area.  

\bibliographystyle{plain}      
\bibliography{tac_refs}   

\end{document}